\font\Times=ptmr at 10pt
\font\it=ptmri at 10pt
\font\smallit=ptmri at 8 pt
 \theoremstyle{plain}
 \newtheorem{lemma}{Lemma}[section]
\newtheorem{theorem}[lemma]{Theorem }
\newtheorem{corollary}[lemma]{Corollary}
\newtheorem{prop}[lemma]{Proposition}
\theoremstyle{definition}
\newtheorem{example}{Example}
\newtheorem{definition}[lemma]{Definition }
\newtheorem{rmk}[lemma]{Remark}
\newtheorem{rmks}[lemma]{Remarks}
\long\def\[#1\]{\ignore\recognize}
\def\ltextindent#1{\hbox to \hangindent{#1\hss}\ignorespaces}
\long\def\ignore#1\recognize{}
\def\med{\medskip}
\def\hs{\hskip}
\def\vs{\vskip}
\def\ds{\displaystyle}
\def\ol{\overline}
\def\wh{\widehat}
\def\wt{\widetilde}
\def\sm{\setminus}
\def\{{\lbrace}
\def\}{\rbrace}
\def\isom{\cong}
\def\map{\rightarrow}
\def\mapname#1{\,\,\smash{\mathop{\longrightarrow}\limits^{#1}}\,\,}
\def\6{\partial}
\def\1{1\!\! 1}
\def\R{\Bbb R}
\def\C{{\Bbb C}}
\def\N{{\Bbb N}}
\def\A{\Bbb A}
\def\P{{\mathcal P}}
\def\O{{\cal O}}
\def\ord{{\rm ord}}
\def\abs#1{\vert#1\vert}
\def\b{\beta}
\def\a{\alpha}
\def\phi{\varphi}
\def\Reg{{\rm Arq.Reg}}
\def\Sing{{\rm Arq.Sing}}
\def\Spec{{\rm Spec}}
\def\arqregular{arq.regular }
\def\arqregulaR{arq.regular}
\def\Arqregular{Arq.regular }
\def\arqregularity{arq.regularity }
\def\arqsingular{arq.singular }
\def\Ker{{\rm Ker}}
\def\Gl{{\rm Gl}}
\def\height{{\rm ht}}
\def\x{{\rm x}}
\def\y{{\rm y}}
\def\z{{\rm z}}
\def\u{{\rm u}}
\def\v{{\rm v}}
\def\w{{\rm w}}
\def\uu{u_\infty}
\def\dd{{\bf d}}
\def\VV{{\mathcal V}}
\def\UU{{\mathcal U}}
\def\YY{{\mathcal Y}}
\def\ZZ{{\mathcal Z}}
\def\SS{{\mathcal S}}
\def\O{{\mathcal O}}
\def\PP{{\mathcal P}}
\def\RR{{\mathcal R}}
\def\GG{{\mathcal G}}
\def\AA{{\mathcal A}}
\def\AAA{\mathcal A_\circ}
\def\AACC{\mathcal A}
\def\BB{{\mathcal B}}
\def\BBB{\mathcal B_\circ}
\def\DD{\Delta}
\def\WW{{\mathcal{W}}}
\def\ff{{f_\infty}}
\def\vv{v}
\def\ww{w}
\def\mm{{ M}}
\def\zz{z}
\def\aa{{\rm a}}
\def\t{{\rm t}}
\def\s{{\rm s}}
\def\k{{\bf k_\circ}}
\def\kkk{{\bf k}}
\def\kk{k}
\def\mm{M}
\def\nn{N}
\def\diag{{\rm diag}}
\def\an{an }
\def\arquile{arquile }
\def\Arquile{Arquile }
\def\ttextile{textile }
\def\langlew{\longrightarrow}
\newcommand\blfootnote[1]{%
  \begingroup
  \renewcommand\thefootnote{}\footnote{ #1}%
  \addtocounter{footnote}{-1}%
  \endgroup
}
\begin{document}

%-------------------------------------------------
%            TITLE
%-------------------------------------------------

\title{Arquile Varieties {\bf -} Varieties Consisting of Power Series in a Single Variable}

\author{Herwig Hauser, Sebastian Woblistin}

\blfootnote{MSC2010: 13P10, 13J05, 14Q20, 16W60, 30H50, 32A05, 32A38. Key words: Arc spaces, arquile varieties, formal power series, Artin approximation, infinite dimensional geometry. 

The authors acknowledge the support by the Austrian Science Fund FWF through the projects P-25652, P-31338 and AI-0038211, respectively through the joint project I-1776 of FWF and the French National Research Agency ANR. Part of the work has been done during the special semester on Artin approximation at CIRM in spring 2015 within the Chaire Jean Morlet of the first-named author at Aix-Marseille University. The hospitality of the personnel at CIRM was greatly appreciated.}

\maketitle

%---------------------------------
%             HEADER
%---------------------------------

\pagestyle{myheadings}
\markright{\hfill ARQUILE VARIETIES\hfill}

%-------------------------------------------------
%                 ABSTRACT
%-------------------------------------------------

\begin{abstract} Spaces of power series solutions $y(\t)$ in one variable $\t$ of systems of polynomial, algebraic, analytic or formal equations $f(\t,\y)=0$ can be viewed  as ``infinite-dimensional'' varieties over the ground field $\kkk$ as well as ``finite-dimensional'' schemes over the power series ring $\kkk[[\t]]$. We propose to call these solution spaces {\smallit arquile varieties}, as an enhancement of the concept of {\smallit arc spaces}. It will be proven that arquile varieties admit a natural stratification $\YY=\bigsqcup\YY_d$, $d\in\N$, such that each stratum $\YY_d$ is isomorphic to a cartesian product $\ZZ_d\times \A^\infty_\kkk$ of a finite dimensional, possibly singular variety $\ZZ_d$ over $\kkk$ with an affine space $\A^\infty_\kkk$ of infinite dimension. This shows that the singularities of the solution space of $f(\t,\y)=0$ are confined, up to the stratification, to the finite dimensional part.

Our results are established simultaneously for algebraic, convergent and formal power series, as well as convergent power series with prescribed radius of convergence. The key technical tool is a linearization theorem, already used implicitly by Greenberg and Artin, showing that analytic maps between power series spaces can be {\smallit essentially} linearized by automorphisms of the source space.

Instead of stratifying arquile varieties one may alternatively consider formal neighborhoods of their regular points and reprove with similar methods the Grinberg-Kazhdan-Drinfeld factorization theorem for arc spaces in the classical and in the more general setting.

\end{abstract}

%-------------------------------------------------
%               CONTENTS
%-------------------------------------------------

\tableofcontents

%-------------------------------------------------
%           INTRODUCTION
%-------------------------------------------------

%\newpage 

%\hfill hh, june 14, 2016

%-------------------------------------------------
%            INTRODUCTION
%-------------------------------------------------

%\section{Introduction}

\section{Concepts and results}\label{section_concepts}

Throughout this article, we will work simultaneously with the ring $\kkk[[\t]]$ of formal power series  in one variable $\t$ over a perfect field $\kkk$, the ring $\kkk\{\t\}$ of convergent power series over a valued field $\kkk$, the ring $\kkk\{\t\}_s$ of convergent power series with finite $s$-norm, for $s>0$, or the Henselian ring  $\kkk\langle\t\rangle$ of algebraic (= Nash) series, the algebraic closure of $\kkk[\t]$ inside $\kkk[[\t]]$. We will reserve the letter $\AA$ for any of these rings.%
\footnote{ The ring $\kkk\{\t\}_s=\{\sum\, c_k\t^k\in \kkk\{\t\},\, \sum\abs{c_k}s^k<\infty\}$ is considered as a Banach-algebra with the norm $\abs{\sum\, c_k\t^k}_s =\sum\abs{c_k}s^k$, for every $s>0$, as described e.g. in \cite{GR}.} 
The subrings of series $y(\t)$ without constant term, say, with $y(0)=0$, will be denoted by $\AAA$ or, in case of ambiguity, by $\k[[\t]]$, $\k\{\t\}$, $\k\{t\}_s$, respectively $\k\langle\t\rangle$. These rings will be equipped with the $\t$-adic topology given by the ideals $\langle \t\rangle^r$ as a basis of neighborhoods of $0$.%
\footnote{ The restriction to series without constant term has mostly notational reasons, since for these the substitution of power series into each other poses no problems. For series with non-zero constant term one would have to be more cautious when defining substitution, adding each time the appropriate assumptions. Note that $\AAA$ is a ring without $1$-element.}
We distinguish the various settings (formal, convergent, ...) by saying that the involved series $y(\t)$ have the respective {\it quality}.\medskip

To formulate our results, we need to introduce some basic concepts about power series spaces and maps between them.\medskip

Let $\AAA^m$ denote the $m$-fold cartesian product of $\AAA$. Its elements are vectors of power series $y(\t)=(y_1(\t),...,y_m(\t))$ vanishing at $0$, commonly called ($m$-fold) {\it arcs centered at} $0$. As each series $y_i(\t)$ is given by the coefficients $(y_{ij})_{j\geq 1}$ in $\kkk$ of its power series expansion $\sum_{j\geq 1} y_{ij}\cdot \t^j$, we may identify $\AAA^m$ with a subspace of the space $(\kkk^{\N_{>0}})^m$ of vectors of sequences in $\kkk$ indexed by the positive integers. The space $\AAA^m$ is the ambient space where our objects of interest live: These are the collections of all power series solutions $y(\t)$ to formal, analytic or algebraic equations $f(\t,y(\t))=0$. We propose to call $\AAA^m$ the $m$-fold {\it affine space} over $\AAA$. It will carry three topologies, the $\t$-adic topology induced from $\AAA$, and the textile, respectively \arquile topologies defined later on. For $\AAA=\kkk[[\t]]$, its {\it coordinate ring} is the polynomial ring $\kkk[\y_{ij},\,  1\leq i\leq m,\, j\geq 1]$ in countably many variables $\y_{ij}$. We abbreviate this ring by $\kkk[\y_{m,\infty}]$.%
\footnote{ Variables $\t$, $\y$ and $\y_{ij}$ are denoted by roman characters, power series $y=y(\t)$ and their coefficients $y_{ij}$ by slanted characters.}
\medskip

A map $\tau:\UU\subset\AAA^m\map\AA^\kk$ between (subsets of) affine  spaces is given by prescribing the coefficients of the images $\tau(y(\t))$ as functions of the coefficients of the power series vectors $y(\t)$. We say that $\tau$ is {\it textile} if each coefficient of $\tau(y(\t))$ is a {\it polynomial} in (finitely many of) the coefficients of the input $y(\t)$.%
\footnote{ In the setting of formal power series $\AAA=\k[[\t]]$, textile maps are therefore the restriction to the $\kkk$-points of morphisms of affine schemes if we consider $\AAA^m$ and $\AAA^\kk$ as the ``infinite-dimensional'' schemes $\Spec(\kkk[\y_{m,\infty}])$ and $\Spec(\kkk[\y_{\kk,\infty}])$ over $\kkk$.}
A priori, we do not prescribe any further relations between these coefficient polynomials, even though later on they will be highly related in the specific  applications we have in mind. Note that textile maps are $\t$-adically continuous. In the setting of convergent or algebraic power series, the coefficient polynomials have to be modelled such that the image power series vectors are again convergent, respectively, algebraic.\medskip

Zerosets $\tau^{-1}(0)$ of textile maps $\tau:\AAA^m\map\AA^\kk$ define the closed sets of a topology on $\AAA^m$, the {\it textile topology}. These sets are given by (usually infinite) systems of polynomial equations in the coefficients of the involved power series vectors. A subset $\RR$ of $\AAA^m$ is called {\it textile cofinite} if it admits a finite defining system. For $\AAA=\kkk[[\t]]$, the textile topology coincides with the Zariski-topology. Compare this with the concept of {\it Greenberg schemes} developed in \cite{CNS}. \medskip

Textile maps form a rather large class of maps between affine spaces over $\AAA$. We will essentially use two types of such maps, the $\kkk$-linear maps given by the Weierstrass division of power series, sending a series to its quotient or remainder with respect to a prescribed divisor (see the section on division), and maps given by substitution (see below). It seems that there is no substantially smaller {\it natural} class of maps between power series spaces which contains these two types. \medskip

%-------------------------------------------------
%         ARQUILE MAPS
%-------------------------------------------------

An {\it \arquile} map $\alpha:\UU\subset\AAA^m\map\AA^\kk$ is defined by the {\it substitution} of the $\y$-variables of a given power series vector $f(\t,\y)$ by power series vectors $y(\t)\in\UU$. More precisely, if there exists, for the chosen quality of $\AAA$, a power series vector $f=(f_1,...,f_\kk)$ in $\kkk[[\t,\y]]^\kk$, respectively $\kkk\{\t,\y\}^\kk$, or $\kkk\langle\t,\y\rangle^\kk$, depending on $1+m$ variables $\t,\y_1,...,\y_m$, such that $\alpha$ is defined by
\footnote{  In the case where $\AAA=\k\{\t\}_s$ is the space of convergent series with finite $s$-norm, for some $s>0$, the map $\a$ is only defined on the set of vectors $y(\t)$ for which the norms $\abs{y_i(\t)}_s$ are sufficiently small so that $f(\t,y(\t))$ has again finite $s$-norm, see section 7 of \cite{HM} for details.}
$$\alpha(y(\t))=f(\t,y_1(\t),...,y_m(\t)).$$
\Arquile maps are textile. Actually, the definition of arquile maps appears to be somewhat too restrictive and is not the most natural one: before applying $f$, it should  be allowed to cut the power series vector $y(\t)$ at any chosen degree into the sum of a polynomial vector (its jet) and a remainder power series vector, and to define arquile maps as the sum of a polynomial map in the coefficients of the jet and a substitution map applied to the remainder series as in the definition above. This is justified since we are only interested in power series modulo polynomials, that is, up to cutting off polynomial truncations. However, the more general definition would increase enormously the notational luggage, so we prefer to work here with the more restrictive definition.\medskip

The relation between $\alpha$ and $f$ will be expressed in symbols by 
$$\alpha =f_\infty:\UU\subset\AAA^m\map{\mathcal A}^\kk,$$
$$y(\t)\map f(\t,y(\t)),$$
and by saying that $\alpha$ is the \arquile map induced by $f$. The rings $\kkk[[\t,\y]]$, $\kkk\{\t,\y\}$ and $\kkk\langle\t,\y\rangle$ will be denoted by the letter $\BB$, and $\BBB=\BB\cap\k[[\t,\y]]$ denotes the ring of series $f=f(\t,\y)$ with $f(0,0)=0$. \medskip

%-------------------------------------------------
%                DEF ARQUILE VARIETIES
%-------------------------------------------------

An {\it \arquile variety} $\YY$ is a subset of $\AAA^m$ given as the zeroset 
$$\YY=\YY(f)=\alpha^{-1}(0)=\{y(\t)\in\AAA^m,\, f(\t,y(\t))=0\}$$
of an \arquile map $\alpha=f_\infty: \AAA^m\map{\mathcal A}^\kk$. This is a generalization of the concept of the arc space of an algebraic variety, by allowing the parameter $\t$ to appear in the defining equations of $\YY$ and by considering also  formal, analytic and algebraic equations instead of just polynomial ones. See section \ref{arquile} for a detailed discussion. Such subsets form the closed sets of a topology on $\AAA^m$, the {\it \arquile} topology. Accordingly, we get the notion of open, closed and locally closed \arquile subsets of $\AAA^m$.%
\footnote{ One could also define affine \arquile schemes, but we will have no need of these here.} 
Closed arquile sets are defined by an infinite set of polynomial equations $F_\ell=0$, $\ell\in\N$, for the coefficients $y_{ij}$ of the involved power series vectors.%
\footnote{ These polynomial equations stem from the system $f(\t,y(\t))=0$ by Taylor expansion.}
Both the $\t$-adic and the textile topologies are finer than the \arquile topology.\medskip

%-------------------------------------------------
%            EXAMPLE ARQUILE VARIETY
%-------------------------------------------------

\begin{example} To illustrate, consider the set $\YY$ of pairs of power series $(y(\t),\zz(\t))\in\AAA^2=\k[[\t]]^2$ satisfying the quadratic equation
$$f(\t,y(\t),\zz(\t))=y(\t)^2 -2\t\cdot y(\t)\cdot \zz(\t)^2+\t^4=0.$$
One may solve for $y$ in terms of $\zz$ and get
$$y(\t)=\t\cdot\zz(\t)\pm \t\cdot \sqrt{\zz(\t)^4-\t^2}.$$
The root $\sqrt{\zz(\t)^4-\t^2}$ is a formal power series in $\t$ of order $1$ for any series $\zz(\t)\in\AAA$. %The \arquile variety $\YY$ has two ``branches'' meeting at the points $(\t^2,\t)$ and $-(\t^2,\t)$.
\end{example}

%-------------------------------------------------
%       REGULAR & SINGULAR POINTS
%-------------------------------------------------

There is a natural way to define the {\it \arquile regular} and {\it singular loci} $\Reg(\YY)$ and $\Sing(\YY)$ of \an \arquile variety $\YY=\YY(f)$:%
\footnote{ As the regularity of an \arquile variety does not exactly match the respective definition in the finite dimensional setting, we are led to use a separate terminology. For a detailed study of this concept, see \cite{Wo}.}
Let $I=I_\YY$ be the ideal of $\BB$ of power series $f(\t,\y)$ vanishing on $\YY$, say $f(\t,y(\t))=0$ for all $y\in\YY$. For a point $y$ of $\YY$, denote by $\PP_y$ the prime ideal of $\BB$ of power series $f(\t,\y)$ vanishing at $y$,
$$\PP_y= \{f\in\BB,\, f(\t,y(\t))=0\}.$$
Let $\BB_{\PP_y}$ be the localization of $\BB$ at $\PP_y$. This is a Noetherian ring in the formal, convergent and algebraic setting.%
\footnote{ In the case $\AAA=k\{\t\}_s$, which is a non-Noetherian ring, this definition of regularity does not apply. Instead, one has to take the non-vanishing of the minors from assertion (a) of Theorem \ref{regularpoints} as the correct definition, see also Theorem \ref{singularlocus}.}
Then $\YY$ is called {\it \arquile regular} at $y$, or $y$ is called an {\it \arqregulaR} point of $\YY$, if the factor ring $$\BB_{\PP_y}/I\cdot\BB_{\PP_y}$$ is a  regular local ring. Otherwise, $\YY$ is called {\it \arquile singular} at $y$. We set $\Reg=\{y\in\YY,\, y$ \arqregular point of $\YY\}$ and $\Sing(\YY)=\YY\sm\Reg(\YY)$.\medskip\goodbreak

If $\YY=X_{\infty,0}$ is the space of arcs centered at $0$ of an algebraic subvariety $X$ of $\A^m_\kkk$, %we show in Theorem \ref{singularlocus} that 
a point $y\in\YY$ is \arqregular if and only if the image of $y$ in $X$ (i.e., the generic point of $y$) does not lie entirely in the singular locus of $X$, that is, if $y\not\in ({\rm Sing}\, X)_{\infty,0}$. This condition appears naturally in the theory of arc spaces, see e.g.~\cite{GK, Dr, DL, BS1, BS2, Bou, dFEI, CNS}.\medskip

%-------------------------------------------------
%         THEOREM REGULAR POINTS
%-------------------------------------------------
\goodbreak 

Our first result is the description of the $\t$-adically local geometry of \arquile varieties (see Thms.~\ref{singularlocus} and \ref{partition} for the precise statements).

%-------------------------------------------------

\begin{theorem} [\Arqregular and \arqsingular points] \label{regularpoints} Let $\YY=\YY(f)$ be \an \arquile variety in $\AAA^m$, where $\AAA$ denotes one of the rings $\k[[\t]]$, $\k\{\t\}$, or $\k\langle\t\rangle$, and where $f\in\BB^k$ is a power series vector in $\t$ and $\y_1,...,\y_m$ of the same quality as $\AAA$.%
\footnote{ In the case of $\AAA=\k\{\t\}_s$, assertion (a) is taken as the definition of \arquile regularity, see the preceding footnote.}
\vs.2cm

{\rm (a)} The \arquile singular locus $\Sing(\YY)$ of $\YY$ is \an \arquile closed, proper subset of $\YY$. It can be defined in $\YY$ by the vanishing of appropriate minors $g_j(\t,\y)$, $j=1,...,q$, of the relative Jacobian matrix $\6_\y f(\t,\y)$ of $f$, 
$$\Sing(\YY)=\{y(\t)\in\YY,\, g_1(\t,y(\t))=\ldots =g_q(\t,y(\t))=0\}.$$

{\rm (b)} Every \arqregular point $y\in\Reg(\YY)$ of $\YY$ has a $\t$-adically open and textile locally closed neighborhood $\WW$ which is textile isomorphic to a free $\AAA$-module $\AAA^s$,
$$\Phi:\WW\mapname{\isom} \AAA^s.$$
\end{theorem}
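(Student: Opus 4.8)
The plan is to pass to the generic point of $y$, detect \arquile regularity there via the Jacobian criterion in the regular local ring $\BB_{\PP_y}$, and then convert the resulting algebraic statement into a geometric normal form by the implicit function theorem, upgraded by the linearization theorem of the paper when the naive implicit function theorem fails. As a preliminary valid for both parts, observe that for any $y\in\YY$ one has $\PP_y=\langle\y_1-y_1(\t),\dots,\y_m-y_m(\t)\rangle$: the right-hand ideal is prime with quotient $\AAA$ (substitute $\y_i\mapsto y_i(\t)$) and is contained in the prime $\PP_y$, and since $\AAA$ is one-dimensional while $\t\notin\PP_y$ the two ideals coincide; in particular $\height\PP_y=m$, so $\BB_{\PP_y}$ is regular local of dimension $m$, with residue field $\kkk((\t))$ and maximal ideal generated by the regular sequence $(\y_j-y_j(\t))_{j}$. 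Using that $\BB$ is Noetherian we may moreover, without changing $\YY$, replace $f$ by a finite generating system of the radical ideal $I_\YY$, again written $f$. Then $\YY$ is \arquile regular at $y$ iff $(f)\BB_{\PP_y}$ is generated by part of a regular system of parameters, which by the Jacobian criterion in the excellent regular ring $\BB_{\PP_y}$ happens iff the matrix $\6_\y f(\t,y(\t))\in\AAA^{k\times m}$, read over $\kkk((\t))$, has rank equal to the height of the (then unique) minimal prime $\mathfrak q$ of $I_\YY$ lying below $\PP_y$.

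For {\rm(a)}, set $\rho(y)={\rm rank}_{\kkk((\t))}\,\6_\y f(\t,y(\t))$. By the above, $\Sing(\YY)$ is the set of points of $\YY$ at which $\rho$ fails to attain the generic value $\rho_Z$ of $\rho$ along the component $Z$ through that point. Since the rank of a matrix of power series drops precisely on the common zero set of its minors of a given size, $Z\cap\Sing(\YY)$ is cut out inside $Z$ by the $\rho_Z\times\rho_Z$ minors of $\6_\y f$; taking the union over the finitely many components and, by Noetherianity of $\BB$, retaining only finitely many of the minors yields the \arquile closed description $\Sing(\YY)=\{\,y\in\YY:\ g_1(\t,y(\t))=\dots=g_q(\t,y(\t))=0\,\}$ with the $g_j$ among the minors of $\6_\y f$. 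That $\Sing(\YY)\subsetneq\YY$ is the existence of at least one \arquile regular point on each component, which follows from generic smoothness of the reduced scheme underlying $\YY$ (here $I_\YY$ is radical and $\kkk$ is perfect) together with Artin approximation, used to produce an actual arc realising the generically maximal Jacobian rank.

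For {\rm(b)}, fix $y\in\Reg(\YY)$. The affine-linear, hence textile, automorphism $\y\mapsto\y+y(\t)$ of $\AAA^m$ carries $y$ to the origin, so we may assume $y=0$, $f(\t,0)=0$ and $\PP_0=\langle\y\rangle$; then $\t$ is a unit in $\BB_{\langle\y\rangle}$. Regularity at $0$ now says that $\BB_{\langle\y\rangle}/I_\YY\BB_{\langle\y\rangle}$ is regular of some dimension $s$, with $I_\YY\BB_{\langle\y\rangle}$ generated by $c:=m-s$ members of a regular system of parameters; concretely, after permuting the $\y_i$ and shrinking to a $\t$-adic neighborhood we may assume $\YY$ is defined near $0$ by a subsystem $f_1=\dots=f_c=0$ whose Jacobian $\det(\6_{\y_j}f_i)_{1\le i,j\le c}$ equals $\t^{e}u$ with $u$ a unit of $\AAA$ and $e\ge0$. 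If $e=0$, the implicit function theorem for power series of the given quality expresses $y_1,\dots,y_c$ as series in $\t,y_{c+1},\dots,y_m$ on a $\t$-adic neighborhood $\WW$ of $0$ in $\YY$, and the projection $(y_1,\dots,y_m)\mapsto(y_{c+1},\dots,y_m)$ is the sought textile isomorphism $\Phi:\WW\mapname{\isom}\AAA^s$. If $e>0$, the naive implicit function theorem breaks down — this is the crux — and one invokes the linearization theorem: there is a $\t$-adically bicontinuous textile automorphism $\psi$ of $\AAA^m$, defined near $0$, such that $f\circ\psi$ is \emph{essentially linear}, and regularity at $0$ forces its essentially-linear normal form to have no finite-dimensional singular factor, so that $(f\circ\psi)^{-1}(0)$ is the kernel of a $\kkk$-linear map of free $\AAA$-modules. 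By Weierstrass division this kernel is a direct sum of copies of $\AAA$ and of submodules $\t^{e_i}\AAA$, and since multiplication by $\t^{e_i}$ is an invertible textile map $\AAA\map\t^{e_i}\AAA$ it is textile isomorphic to $\AAA^s$; transporting this isomorphism back through $\psi$ gives $\Phi$ on the $\t$-adically open, textile locally closed set $\WW=\psi(U)\cap\YY$ for a small $\t$-adic neighborhood $U$ of $0$.

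The heart of the proof is the case $e>0$, where the defining equations stay singular at $\t=0$ even though the generic point of $y$ is regular; this is exactly the regime in which the implicit linearization already used by Greenberg and Artin — here the paper's linearization theorem — is indispensable. The remaining points, namely the passage from $f$ to generators of $I_\YY$, the redundancy near a regular point of the equations outside the chosen $c\times c$ subsystem, and the checking that all constructed bijections and their inverses are textile and $\t$-adically continuous, are routine bookkeeping once this normal form is available.
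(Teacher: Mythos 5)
Your overall strategy coincides with the paper's: part (a) is the Jacobian criterion applied in the regular local ring $\BB_{\PP_y}$ (using $\PP_y=\langle \y-y(\t)\rangle$), and part (b) is the linearization/factorization machinery restricted to the fiber over the Weierstrass remainder of $y$, exactly as in the remark following Theorem \ref{structuretheorem}. Three smaller points on (a). First, your criterion is stated with the \emph{relative} Jacobian $\6_\y f$, whereas the criterion of Proposition \ref{jacobian} involves $\6_{\t\y}f$; the passage between the two needs the identity $\6_\t f(y)=-\6_\y f(y)\cdot \6_\t y$ obtained by deriving $f(\t,y(\t))=0$. Second, once you have replaced $f$ by generators of the saturated ideal $I_\YY$, properness of $\Sing(\YY)$ needs no Artin approximation: a minor $g\notin I_\YY$ fails to vanish at some point of $\YY$ by the very definition of saturation. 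Third, in the reducible case your component-wise description gives a finite \emph{union} of minor loci, which is not literally of the form $\YY\cap V(g_1,\dots,g_q)$ with the $g_j$ minors; the paper's own proof of Theorem \ref{singularlocus}(e) resorts to the ideals $(I_i+I_j)\cap J_i$ at this point.

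The genuine gap is in (b), at the step ``we may assume $\YY$ is defined near $0$ by a subsystem $f_1=\dots=f_c=0$'', which you file under routine bookkeeping. For an arbitrary choice of $f_1,\dots,f_c\in I_\YY$ generating $I_\YY\cdot\BB_{\PP_y}$, the complete intersection $\YY(f_1,\dots,f_c)$ acquires extraneous components, and for $c\geq 2$ these can meet every $\t$-adic neighborhood of $y$ and even the fiber $y+\VV_d$ on which your construction lives; your $\WW\isom\AAA^s$ would then be a neighborhood of $y$ in $\YY(f_1,\dots,f_c)$ but not a subset of $\YY$. The paper's repair (proof of Theorem \ref{singularlocus}(b)) is a specific device, not bookkeeping: write the component $I_1$ through $y$ as a colon ideal $I_1=(I_0:h)$ with $h\notin\PP_y$, replace $f_i$ by $f_i'=h^2\cdot f_i$, and note that the new minor $g'$ is divisible by $h$, so that $\YY(I')\sm\YY(g')=\YY\sm\YY(g')$; since $g'(y)\neq 0$ and arquile maps are $1$-Lipschitz for the $\t$-adic metric, a small $\t$-adic neighborhood of $y$ avoids $\YY(g')$, and only then does $\YY$ coincide with the complete intersection near $y$. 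With that inserted, the rest of your argument for (b) --- translation to $0$, the case $e=0$ by the implicit function theorem, the case $e>0$ by the linearization theorem reducing to the kernel of $\vv\mapsto \6_\y f(0)\cdot\vv$ on $\VV_e$, a free module of rank $s=m-c$ --- is the paper's argument.
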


%-------------------------------------------------
%          REMARK THM REGULAR POINTS
%-------------------------------------------------

\begin{rmks} (a) It follows from assertion (a) that the iterated \arqsingular loci $\,\Sing_{i+1}(\YY)=\Sing(\Sing_i(\YY))$ of $\YY$, with $\Sing_0(\YY)=\YY$, define a {\it finite} filtration of $\YY$ by \arquile closed subvarieties. Their set-theoretic differences admit the analogous local description as $\YY$.\medskip

(b) Assertion (b) says that $\t$-adic locally at an \arqregular point $y$ of $\YY$, \an \arquile variety is ``smooth'' in the sense of differential geometry. The neighborhood $\WW$ and the isomorphim $\Phi$ are constructed explicitly in the proof. In general, it seems that the neighborhood $\WW$ cannot be chosen \arquile open or at least textile open, cf.~\cite{BS3}. As $\Phi$ is a textile isomorphism, it is also a $\t$-adic homeomorphism.
\end{rmks}

%-------------------------------------------------

Assertion (b) of the preceding theorem is a consequence of the principal result of this article, which describes the \arquile local geometry of \arquile varieties. The \arquile topology is coarser than the $\t$-adic topology appearing in Theorem \ref{regularpoints}, so the statements below are stronger (see Theorem \ref{factorization} for a more detailed statement). Results of a similar flavour have been obtained by Bouthier for arc spaces \cite{Bou}. However, the arguments and the formulation of the results there are somewhat different from ours; it would be interesting to figure out the precise relation between the two approaches.

%-------------------------------------------------
%                STRUCTURE THEOREM  MAIN THM
%-------------------------------------------------

\begin{theorem} [Structure of \arquile varieties] \label{structuretheorem} Let $\YY=\YY(f)$ be \an \arquile variety in $\AAA^m$, where $\AAA$ denotes one of the rings $\k[[\t]]$, $\k\{\t\}$, $\k\{\t\}_s$, or $\k\langle\t\rangle$, and where $f\in\BB^k$ is a power series vector in $\t$ and $\y_1,...,\y_m$ of the same quality as $\AAA$. Cover the \arquile regular locus $\Reg(\YY)$ of $\YY$ by the \arquile open subsets 
$$\UU_j=\{y(\t)\in\YY,\, g_j(\t,y(\t))\neq 0\},$$
where $g_1(\t,\y),\ldots,g_q(\t,\y)$ denote minors of the relative Jacobian matrix $\6_\y f(\t,\y)$ of $f$ defining $\Sing(\YY)$ as in Theorem \ref{regularpoints}. For $j=1,...,q$ and $d\in\N$, let 
$$\YY_{jd}=\{y(\t)\in\UU_j,\, \ord_\t\, g_j(\t,y(\t))=d\}$$
denote the textile locally closed stratum in $\UU_{j}$ of vectors $y(\t)$ for which $g_{j}(\t,y(\t))$ has order $d$ as a series in $\t$. Then each $\YY_{jd}$ is (naturally) textile isomorphic to a cartesian product of a (finite dimensional) algebraic variety $\ZZ_{jd}$ with a free $\AAA$-module $\AAA^{s}$,
$$\Phi_{jd}: \YY_{jd}\mapname{\isom} \ZZ_{jd}\times \AAA^{s}.$$
\end{theorem}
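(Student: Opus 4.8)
The plan is to exhibit each nonempty stratum $\YY_{jd}$ as a trivial family with fibre $\AAA^{s}$ over a finite-dimensional jet variety $\ZZ_{jd}$, by truncating one block of the $\y$-variables and Hensel-lifting the complementary block, the lifting being supplied by the linearization theorem. Fix $j$ and $d$, let $c$ be the size of the minor $g_{j}$ and put $s=m-c$. After permuting the components of $f$ and of $\y$ we may assume $g_{j}=\det\bigl(\6_{\y''}f'\bigr)$ with $f'=(f_{1},\dots,f_{c})$ and $\y''=(\y_{s+1},\dots,\y_{m})$; write $\y'=(\y_{1},\dots,\y_{s})$. For $y\in\UU_{j}$ the series $g_{j}$ does not lie in $\PP_{y}$ and is therefore a unit of $\BB_{\PP_{y}}$, so the Jacobian criterion underlying Theorem \ref{regularpoints} shows that there $\YY$ is already cut out by $f'$ alone, and we may assume $k=c$.

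Now view $f'$ as an arquile map $(y',y'')\mapsto f'(\t,y',y'')$ with $y'\in\AAA^{s}$ a parameter. On the stratum $\ord_{\t}g_{j}=d$ the $c\times c$ matrix $\6_{\y''}f'(\t,y',y'')$ has determinant of order exactly $d$, so dividing its adjugate by $g_{j}$ produces a Newton operator which loses $d$ orders per step yet whose iterates converge $\t$-adically once $f'$ vanishes beyond order $2d$, stabilising on each fixed output coefficient after finitely many steps. This is precisely the quantitative Hensel situation encoded — uniformly, and compatibly with the convergent, finite-$s$-norm and Nash qualities — by the linearization theorem (essential linearization of an analytic map by an automorphism of the source, used implicitly by Greenberg and Artin). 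Its output: for every $y'\in\AAA^{s}$ and every degree-$\le d$ jet $p''$ of $\y''$ which can serve as a starting point (the $\t$-coefficients of $f'$ in degrees $\le 2d$ can be killed by completing $p''$) and satisfies $\ord_{\t}g_{j}=d$, there is a unique $y''=Y(y',p'')$ with $f'(\t,y',y'')=0$, $\ord_{\t}g_{j}(\t,y',y'')=d$, and degree-$\le d$ jet equal to $p''$; and the map $Y$ is textile.

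Put $L=2d$ and let $\ZZ_{jd}$ be the locus, inside the finite-dimensional $\kkk$-space of pairs $(p',p'')$ — with $p'$ a polynomial vector of degree $\le L$ and without constant term in the $\y'$-variables and $p''$ one of degree $\le d$ in the $\y''$-variables — of those jets that extend to a point of $\YY_{jd}$. By the previous paragraph $\ZZ_{jd}$ is the zero set of the polynomial equations expressing the vanishing of the degree-$\le L$ coefficients of $f'$ — whether the auxiliary coefficients in degrees $d+1,\dots,2d$ can be killed by completing $p''$ being again a finite system of polynomial conditions, as $\6_{\y''}f'$ has determinant of order $d$ — intersected with the locally closed condition $\ord_{\t}g_{j}=d$; hence it is a finite-dimensional, textile locally closed algebraic variety. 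For $(y',y'')\in\YY_{jd}$ write $p'$ and $p''$ for the degree-$\le L$ truncation of $y'$ and the degree-$\le d$ truncation of $y''$, and $q'=\t^{-L}(y'-p')\in\AAA^{s}$; since $y''=Y(y',p'')$ by the previous step, the maps
$$\Phi_{jd}\colon\YY_{jd}\longmap\ZZ_{jd}\times\AAA^{s},\qquad (y',y'')\longmapsto\bigl((p',p''),\,q'\bigr),$$
and $\bigl((p',p''),q'\bigr)\longmapsto\bigl(p'+\t^{L}q',\,Y(p'+\t^{L}q',p'')\bigr)$ are mutually inverse and, by the textility of $Y$ and of jet truncation and shift, both textile. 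Thus $\Phi_{jd}$ is the asserted textile isomorphism, the free tail $q'$ of $y'$ accounting for the factor $\AAA^{s}$.

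The main obstacle is the construction of $Y$: the $\t$-adic convergence of the $d$-shifted Newton iteration is classical (Greenberg, Artin), but upgrading it to a genuinely textile map — every output coefficient a polynomial in finitely many input coefficients — \emph{uniformly} over $\YY_{jd}$, while preserving convergence with a controlled radius, finite $s$-norm, respectively the Nash property, is exactly the work done by the linearization theorem; without it one obtains only a set-theoretic bijection. The remaining ingredients — the reduction $k=c$ and the finiteness of the system cutting out $\ZZ_{jd}$ — are routine once one knows $\ord_{\t}\det\6_{\y''}f'=d$ on $\YY_{jd}$.
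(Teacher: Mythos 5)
Your strategy tracks the paper's quite closely --- stratify by $\ord_{\t}g_{j}$, split the jet of the non-minor block to degree $2d$ and the minor block to degree $d$, and parametrise by the tail of the non-minor block --- but there are gaps where you treat heavy machinery as a black box rather than supplying what it actually outputs.

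The central issue is the lift $Y(y',p'')$. You assert that its existence, uniqueness, and textility are ``exactly the work done by the linearization theorem.'' That is not what the linearization theorem (Theorem \ref{linearizationintro}/\ref{generallinearization}) says: it produces a textile automorphism $\chi_{d}$ of the stratum $\SS_{d}$ under which $\ff\circ\chi_{d}(\zz,\vv)=f(\zz)+\partial_{\y}f(\zz)\cdot\vv$ becomes linear in $\vv$. It does not solve $f=0$. To get $Y$ you must still solve the linearized system, and here the adjugate identity $\partial^{*}_{\y^{1}}f\cdot\partial_{\y^{1}}f=g\cdot\mathbf 1$ together with the asymmetric weights $(g,g^{2})$ in the division module are what make the solution for the minor-block component $a_{1}$ a \emph{textile} function of $(\zz,a_{2})$ and make the solvability condition $\partial^{*}_{\y^{1}}f(\zz)\cdot f(\zz)\in g(\zz)^{2}\cdot\AAA^{\kk}$ a condition depending \emph{only} on the finite jet $\zz$. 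This is precisely the content of the paper's factorization theorem \ref{factorization}, not of the linearization theorem, and it is not routine: it is why the fibration over $\ZZ_{d}^{*}$ is trivial at all. Your sketch also leaves the characterization of $\ZZ_{jd}$ circular (``jets that extend to a point of $\YY_{jd}$''); the paper shows it is Zariski closed exactly by the adjugate reduction above. Finally, for $\Phi_{jd}$ to be a bijection onto $\ZZ_{jd}\times\AAA^{s}$ you need that membership of $(p',p'')$ in $\ZZ_{jd}$ is independent of the tail $q'$; you seem to assume this, and it is true, but it is a consequence of the same solvability analysis, not of the linearization theorem per se. The reduction to $k=c$ is likewise quicker in your account than in Theorem \ref{singularlocus}(b), which replaces $f$ by $h^{2}f$ for a suitable $h$ to control the other components of $\YY$.

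In short: same decomposition, same degrees, same free parameter, but you attribute the key trivialization step to the wrong theorem and leave the algebraicity of $\ZZ_{jd}$ and the uniformity in $q'$ unproven. Filling those in essentially reproduces the paper's factorization theorem.
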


%-------------------------------------------------
%                REMARKS ON STRUCTURE THEOREM
%-------------------------------------------------

\begin{rmks} (a) It is necessary for the factorization to cover $\YY$ first by the \arquile open sets $\UU_j$ and to then work with each $\UU_j$ individually. These are countable unions of the strata $\YY_{jd}$, for $d\in\N$, and each $\YY_{jd}$ can be factorized as indicated above. By assertion (a) of Theorem \ref{regularpoints} the covering and factorization of the \arqregular locus of $\YY$ can be repeated for the \arquile locally closed strata $\YY_i=\Sing_i(\YY)\sm\Sing_{i+1}(\YY)$ of the \arqregular points of $\Sing_i(\YY)$.\medskip

(b) The varieties $\ZZ_{jd}$ are in general singular (some of them may be empty). They can be equipped naturally with a (not necessarily reduced) scheme structure. All strata and isomorphisms will be constructed explicitly. The textile isomorphisms $\Phi_{jd}$ are compositions of a division map with \an \arquile map, and are, in particular, $\t$-adic homeomorphisms. The smooth factor $\AAA^{s}$ appears naturally through the construction of the map $\Phi_{jd}$ though it is abstractly textile isomorphic to $\AAA$; the chosen factor will be the same for all $j$ and $d$. \medskip

(c) For convergent $y(\t)$ and $f$, the isomorphisms $\Phi_{jd}$ are analytic in the sense of locally convex spaces, see \cite{HM}. In the case where $\AAA=\k\{\t\}_s$, for $s>0$, the statement of the theorem has to be slightly modified: First, it holds only for radii $s$ which are sufficiently small with respect to the given $f$, secondly, one has to restrict $\YY_{jd}$ to a sufficiently small ball with respect to the $s$-norm. The isomorphisms $\Phi_{jd}$ are then Banach-analytic. This relates to the theory of short arcs, in particular to a question of Koll\'ar-N\'emethi, see conjecture 73 in \cite {KN}, as well as \cite{JK, Bou}.\medskip

(d) It is not hard to see that the theorem implies assertion (b) of Theorem \ref{regularpoints} by applying it to $\YY^\circ:=\Reg(\YY)$, the set of \arqregular points of $\YY$, and taking a sufficiently small $\t$-adic neighborhood $\WW$ of the \arqregular point $y$ such that $\Phi_{jd}$ maps $\WW$ to $\{0\}\times \AAA^{s}$. Note that $\WW$ can be taken $\t$-adic open because of (a) of Theorem \ref{regularpoints}.\end{rmks}

%-------------------------------------------------
%             CONSEQUENCES STRUCTURE THM
%-------------------------------------------------

The proof of Theorem \ref{structuretheorem} is based on a geometric interpretation of the proofs of Artin and P\l oski of their respective approximation theorems for the power series solutions of formal and analytic equations \cite{Ar1, Ar2, Pl}. This geometric viewpoint allows us to transfer linearization techniques from differential geometry to the context of \arquile maps between power series spaces: the defining equations of the strata $\YY_{jd}$ will be linearized up to a finite dimensional subspace of $\AAA^m$ by composing them with suitable textile isomorphisms, see Theorem \ref{linearizationintro} below. The linear part of the equations defines the smooth factor $\AAA^{s}$, the non-linear part the variety $\ZZ_{jd}$. The linearization of the defining equations of $\YY_{jd}$ shows that the trivializing isomorphisms $\Phi_{jd}$ of the structure theorem are actually embedded (i.e., stem from isomorphisms of (suitable strata of) the affine ambient space $\AAA^m$) and, moreover, induce a natural scheme structure on the finite dimensional factor $\ZZ_{jd}$. \medskip

As a direct consequence of the structure theorem, one obtains Artin's analytic, algebraic and strong approximation theorems in the univariate case in the following version, cf.~\cite{Ar1, Ar2, Pl, Gre}.
\goodbreak

%-------------------------------------------------
%            COROLLARY  ARTIN APPROXIMATION
%-------------------------------------------------

\begin{corollary} \label{artin} (a) Let $f(\t,\y)$ be a convergent, respectively algebraic power series vector. The sets of convergent, respectively algebraic power series solutions $y(\t)\in\k\{\t\}^m$, respectively $y(\t)\in\k\langle\t\rangle^m$, of $f(\t,\y)=0$ are  $\t$-adically dense in the set of formal power series solutions $\wh y(\t)\in\k[[\t]]^m$.\medskip

(b) Let $f$ be a formal power series vector. For all $c\in\N$, every $\t$-adically sufficiently good approximate solution $\ol y(\t)$ of $f(\t,\y)=0$ admits an exact formal solution $\wh y(\t)$ which coincides with $\ol y(\t)$ up to degree $c$.
\end{corollary}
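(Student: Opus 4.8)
\medskip
\noindent\textit{Proof plan.} I would deduce both assertions from Theorem \ref{structuretheorem}, applied not to $\YY=\YY(f)$ directly but, through the finite filtration $\YY=\Sing_0(\YY)\supseteq\Sing_1(\YY)\supseteq\cdots\supseteq\Sing_{\ell+1}(\YY)=\emptyset$ by iterated arquile singular loci (Remark after Theorem \ref{regularpoints}), to each of the finitely many locally closed strata $\YY_i=\Sing_i(\YY)\sm\Sing_{i+1}(\YY)$: each $\YY_i$ is covered by arquile open sets $\UU_j^{(i)}$, split into textile locally closed strata $\YY_{jd}^{(i)}$, and there are trivializing textile isomorphisms $\Phi_{jd}^{(i)}:\YY_{jd}^{(i)}\mapname{\isom}\ZZ_{jd}^{(i)}\times\AAA^{s}$ onto the product of a finite-dimensional $\kkk$-variety with a free $\AAA$-module. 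I would use throughout that this whole apparatus --- the minors $g_j$, the index set of strata, the varieties $\ZZ_{jd}^{(i)}$, and the maps $\Phi_{jd}^{(i)}$, which are compositions of a Weierstrass division map with an arquile map --- is produced by one and the same explicit recipe for all of $\AAA\in\{\k[[\t]],\k\{\t\},\k\langle\t\rangle\}$ simultaneously, hence is compatible with the inclusions $\k\langle\t\rangle\subseteq\k\{\t\}\subseteq\k[[\t]]$: the convergent and algebraic strata are the traces of the formal ones, the finite-dimensional factors $\ZZ_{jd}^{(i)}$ are the same $\kkk$-varieties in all three settings, and the $\Phi_{jd}^{(i)}$ restrict accordingly.

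\medskip
\noindent\textit{Part (a).} Fix a formal solution $\widehat y\in\k[[\t]]^m$ of $f=0$. There is a unique index $i$ with $\widehat y\in\Sing_i(\YY)\sm\Sing_{i+1}(\YY)$ --- namely the largest $i$ with $\widehat y\in\Sing_i(\YY)$ --- and then a unique stratum $\YY_{jd}^{(i)}$ containing $\widehat y$; write $\Phi_{jd}^{(i)}(\widehat y)=(z_0,w_0)$ with $z_0$ a $\kkk$-point of $\ZZ_{jd}^{(i)}$ and $w_0\in\k[[\t]]^{s}$. Because algebraic series --- already polynomials --- are $\t$-adically dense in $\k[[\t]]^{s}$, choose a sequence $w_\nu\in\k\langle\t\rangle^{s}$ with $w_\nu\to w_0$ and set $y_\nu:=(\Phi_{jd}^{(i)})^{-1}(z_0,w_\nu)$, the inverse being taken for the algebraic quality, for which $\Phi_{jd}^{(i)}$ is an isomorphism $\YY_{jd}^{(i)}\cap\k\langle\t\rangle^m\mapname{\isom}\ZZ_{jd}^{(i)}(\kkk)\times\k\langle\t\rangle^{s}$. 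Each $y_\nu$ is an algebraic --- hence also convergent and formal --- solution of $f=0$, and since $\Phi_{jd}^{(i)}$ is a textile isomorphism, hence a $\t$-adic homeomorphism, $y_\nu\to\widehat y$ $\t$-adically. Thus $\widehat y$ lies in the $\t$-adic closure of the algebraic solution set, which proves the algebraic and, a fortiori, the convergent statement of (a); if $f$ is only convergent, the identical argument applies with $w_\nu\in\k\{\t\}^{s}$.

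\medskip
\noindent\textit{Part (b).} Fix $c\in\N$ and induct on the length $\ell$ of the singular filtration of $\YY=\YY(f)$, with $g_1,\dots,g_q$ the minors of $\6_\y f$ defining $\Sing(\YY)$. Given an approximate solution $\ol y$ with $\ord_\t f(\t,\ol y(\t))\geq N$, there are two cases. If some $g_j(\t,\ol y(\t))$ has order $d$ below a threshold $N_1=N_1(c)$, then $\ol y$ lies in the ambient textile locally closed stratum (cut out by $\ord_\t g_j=d$) on which the embedded, linearized model of $\YY_{jd}^{(0)}$ is defined, one of only finitely many such models for $d<N_1$; transporting $\ol y$ by this model, the congruence $f(\t,\ol y)\equiv 0\bmod\t^N$ decouples into a congruence modulo $\t^N$ for the finitely many polynomial equations that carve $\ZZ_{jd}^{(0)}$ out of finitely many new coordinates --- which for $N$ larger than the degrees involved forces those coordinates to satisfy the equations \emph{exactly} --- together with a congruence modulo $\t^N$ for a $\kkk$-linear system coming from Weierstrass division, which for $N$ large lifts to an exact solution agreeing modulo $\t^{c}$ by an elementary-divisor estimate; pulling back by the ($\t$-adically bicontinuous) model produces the desired $\widehat y$. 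If instead \emph{every} $g_j(\t,\ol y(\t))$ has order $\geq N_1$, then $\ol y$ is an approximate solution of order $\geq\min(N,N_1)$ of the enlarged system $f=g_1=\cdots=g_q=0$, whose zero set is the arquile variety $\Sing(\YY)$, of filtration length $\ell-1$; taking $N_1$ at least the threshold that the inductive hypothesis attaches to $\Sing(\YY)$ and $c$ yields an exact solution of that system, hence of $f=0$, coinciding with $\ol y$ up to degree $c$. The recursion bottoms out at the empty arquile variety, where one uses the elementary fact --- the $M=0$ case of a Greenberg-type estimate, provable by the same Newton iteration --- that a system of formal equations in $\t,\y$ with no solution in $\k[[\t]]^m$ has approximate solutions of bounded order only. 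Taking $N(c)$ to be the maximum of $N_1(c)$ and of the finitely many thresholds arising in the first case gives the required uniform bound.

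\medskip
\noindent\textit{Main obstacle.} The delicate point is exactly the uniformity of $N(c)$ in (b). Since the strata $\YY_{jd}$ are indexed by the \emph{unbounded} order $d=\ord_\t g_j(\t,y(\t))$, they cannot be handled all at once, and there is an apparent circularity between ``which stratum $\ol y$ lands in'' and ``how accurate the approximation must be''. Breaking it requires the recursion along the \emph{finite} singular filtration --- small-$d$ strata being finite in number and trivialized directly, large-$d$ strata being swallowed by the deeper, shorter-filtration singular locus. Consequently the real work beyond invoking Theorem \ref{structuretheorem} is to verify (i) the finiteness of the filtration and the repeatability of the covering and factorization on each $\Sing_i(\YY)$; (ii) that the trivializing isomorphisms are genuinely \emph{embedded} and that, in the coordinates they provide, the transported equation $f=0$ is linear in all but finitely many variables --- so that the high-degree tail of an approximate solution already solves the finite-dimensional equations exactly; and (iii) the base-case boundedness for the empty variety. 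None of (i)--(iii) involves convergence, so (b) comes out for formal $f$ and (a) for convergent or algebraic $f$ in a single stroke.
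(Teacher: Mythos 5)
Your part (a) is essentially the paper's argument (Remark \ref{proofartin}): trivialize a neighbourhood of $\wh y$ via the linearization machinery, then approximate in the free $\AAA$-module factor, using that the trivializing map preserves quality and is a $\t$-adic homeomorphism. That the paper first enlarges $f$ so that it generates $\PP_{\wh y}$ (making $\wh y$ arqregular outright) rather than locating $\wh y$ via the singular filtration, and that it closes with flatness of $\k[[\t]]$ over $\k\{\t\}$ rather than with the explicit product structure, are packaging choices, not substantive differences.

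Your part (b) takes a genuinely different route --- a Greenberg-style descending induction on the singular filtration, whereas the paper's Remark \ref{proofartin} replaces $I$ by the maximal ideal $J\supset\langle f\rangle$ admitting approximate solutions of unbounded order (Artin's device, which makes $J$ prime and lets an approximate solution stand in for an exact one when extracting the needed minor). Your route is a legitimate alternative, but it contains a real gap at the base of the recursion. You land on the case $\YY(J)=\emptyset$ and assert that ``a system of formal equations in $\t,\y$ with no solution in $\k[[\t]]^m$ has approximate solutions of bounded order only'' is ``an elementary fact provable by the same Newton iteration.'' It is not elementary: this is exactly the strong approximation statement in the degenerate case, a genuine ingredient rather than a free one. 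You cannot simply re-run the linearization, because the extraction of a minor of $\6_\y f$ outside $\PP_y$ in Theorem \ref{singularlocus}(a) works by differentiating the identity $f(\t,y(\t))=0$ along an actual point $y\in\YY$, which is unavailable when $\YY=\emptyset$. Nor is $J$ forced to be $\langle\t,\y\rangle$-primary (which would put some $\t^{N_0}$ in $J$ and finish instantly): the proper ideal $J=(\y_1-\y_2,\,\y_1^2-\t)$ in $\kkk[[\t,\y_1,\y_2]]$ has $\YY(J)=\emptyset$, height $2$, and $J\cap\kkk[[\t]]=0$, so neither shortcut is available. Some device of the paper's kind must be supplied to initiate the iteration on an empty zeroset; your sketch simply declares the base case solved.
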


%-------------------------------------------------

Indeed, the structure theorem reduces the approximation problem to the case of linear equations, and there the flatness of the formal power series ring over the rings of convergent and algebraic power series implies the existence of the respective power series solutions, see remark \ref{proofartin} for the details of this reasoning. This method also works for power series in several variables and provides a ``geometric" proof for Artin approximation. The technicalities are, however, more involved \cite{Ha2}. \medskip\goodbreak

%-------------------------------------------------
%                   DEFORMATIONS & FORMAL NEIGHBORHOODS
%-------------------------------------------------

{\it Formally local geometry.} There is a second, more deformation-theoretic approach to the geometry of \arquile varieties: Instead of decomposing them into strata which are shown to be ``globally'' cartesian products, one can also study the germs or formal neighborhoods of \arquile varieties $\YY$ at selected \arqregular points $y$. This corresponds to work with the local rings $\O_{\YY,y}$ of $\YY$ at $y$ (as defined below) and their completions $\wh\O_{\YY,y}$. The respective result, first established by Grinberg-Kazhdan and Drinfeld in the special case of arc spaces, can be understood as a partial generalization of Cohen's structure theorem to the infinite dimensional setting: Cohen's theorem asserts that a complete local Noetherian ring $S$ containing a  field is isomorphic to a factor ring of a ring of formal power series in finitely many variables over this field, see Thm.~29.7 in \cite{Ma}. And $S$ is regular if and only if it is isomorphic to a ring of formal power series in finitely many variables. In the infinite dimensional context, the statement is more complicated, see Theorem \ref{cohen} below, and only provides a description of the rings $\wh \O_{\YY,y}$ for \arqregular points $y$ of $\YY$. A different formulation in terms of deformations is given in Theorem \ref{deformations}. \medskip

We restrict in the statement below to the case of formal power series. An analogous statement holds for convergent power series, considering $\k\{\t\}$ as the inductive limit of the Banach spaces $\k\{\t\}_s$, for $s>0$, as in \cite{GR, HM}. Vectors $y(\t)=(y_1(\t),...,y_m(\t))\in \AAA^m$ will be expanded componentswise into $y_i(\t)=\sum_{j\geq 1} y_{ij}\cdot \t^j$ with coefficients $y_{ij}\in\kkk^m$. Let as above $\kkk[\y_{m,\infty}]$ denote the polynomial ring in countably many variables $\y_{ij}$, for $1\leq i\leq m$ and $j\geq 1$. If $\YY=\YY(I)$ is \an \arquile variety in $\AAA^m$ defined by the ideal $I$ of $\BB$, the elements of $I$ induce, by comparison of the coefficients of $\t^\ell$, polynomials $F_\ell\in \kkk[\y_{m,\infty}]$, $\ell\in \N$, which define the equations between the coefficients $y_{ij}$ of points $y$ of $\YY$. Denote by $I_\infty$ the resulting ideal of $\kkk[\y_{m,\infty}]$, so that $\kkk[\y_{m,\infty}]/I_\infty$ is the (infinite-dimensional) coordinate ring of $\YY$. Let $\mm_{y,\infty}$ be the maximal ideal of $\kkk[\y_{m,\infty}]$ generated by all $\y_{ij}-y_{ij}$, for $1\leq i\leq m$ and $j\geq 1$, and let $\nn_{y,\infty}$ be its image in $\kkk[\y_{m,\infty}]/I_\infty$. The local ring $\O_{\YY,y}$ of $\YY$ at $y$ is given by the localization 
$$\O_{\YY,y}=(\kkk[\y_{m,\infty}]/I_\infty)_{\nn_{y,\infty}}.$$ %=(\kkk[\y_{m,\infty}]_{\mm_{y,\infty}})/I_\infty\cdot \kkk[\y_{m,\infty}]_{\mm_{y,\infty}}$$
Let $\wh\O_{\YY,y}=\varprojlim\, \O_{\YY,y}/\nn_{y,\infty}^k$ be the inverse limit of the factor rings $\O_{\YY,y}/\nn_{y,\infty}^k$.%
\footnote{ The ring $\wh\O_{\YY,y}$ will in general not be complete with respect to the $\wh\nn_{\YY,y}$-adic topology, for $\wh\nn_{\YY,y}$ the maximal ideal of $\wh\O_{\YY,y}$. It is only complete with respect to the inverse limit topology, see the section on deformations. As such, it can be identified with the ring $\kkk[[\y_{m,\infty}]]/\wh I_\infty$, where $\wh I_\infty$ denotes the {\smallit closure} of the ideal $I_\infty\cdot \kkk[[\y_{m,\infty}]]$ in the inverse limit topology, see section 1.2.1 in \cite{Ch}.}
We call this ring the {\it completed local ring} of $\YY$ at $y$. It defines the {\it formal neighborhood} $(\wh {\YY,y})$ of $y$ in $\YY$.

\goodbreak
%-------------------------------------------------
%         GKD THEOREM  & COHEN STRUCTURE THEOREM
%-------------------------------------------------

\begin{theorem} [Grinberg-Kazhdan-Drinfeld factorization theorem] \label{cohen} %
Let $\YY=\YY(f)\subset\AAA^m$ be \an \arquile variety defined by a polynomial vector $f(\t,\y)$, and let $y=y(\t)$ be an \arqregular point of $\YY$. Then the completed local ring $\wh \O_{\YY,y}$ is isomorphic to a formal power series ring $C[[\v_\infty]] =C[[\v_1,\v_2,...]]$ in countably many variables with coefficients in a $\kkk$-algebra $C$ which is the completion of a localization of a $\kkk$-algebra of finite type, 
$$\wh\O_{\YY,y}\isom C[[\v_\infty]].$$

\end{theorem}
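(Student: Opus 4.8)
The plan is to perform, in the \emph{formal} neighbourhood of $y$, the same Artin--P\l oski-style linearization of $f$ that underlies Theorem~\ref{structuretheorem} and the linearization Theorem~\ref{linearizationintro}, but \emph{without first passing to a stratum}: at the single point $y$ the relevant Jacobian minor can be Weierstrass-prepared into a distinguished element of fixed $\t$-order, so that the stratification of $\Reg(\YY)$ becomes unnecessary and is instead absorbed, algebraically, into the finite-dimensional factor $C$. First I would normalise: translating $\y_i\mapsto\y_i+y_i(\t)$, a textile automorphism of $\AAA^m$, we may assume $y=0$, and since $f(\t,0)=0$ this forces $f(0,0)=0$, so $f\in\BBB^k$ and $0$ is an exact solution; recall (cf.\ the footnote defining $\wh\O_{\YY,y}$) that $\wh\O_{\YY,0}\isom\kkk[[\y_{m,\infty}]]/\wh I_\infty$. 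By Theorem~\ref{regularpoints}(a), arq.regularity at $0$ supplies a $p\times p$ minor $g=g(\t,\y)$ of $\6_\y f$ with $g(\t,0)\neq0$ in $\AAA$; put $d:=\ord_\t g(\t,0)$ and, after reordering, assume $g=\det(\6 f_i/\6\y_\ell)_{1\le i,\ell\le p}$ and that near $\PP_0=(\y_1,\dots,\y_m)$ the vanishing ideal $I_\YY$ is generated by $f_1,\dots,f_p$ (Jacobian criterion at a regular point). Writing $\y_i=\sum_{j\ge1}\y_{ij}\t^j$, the $\t$-coefficients $[g]_0,[g]_1,\dots\in\kkk[\y_{m,\infty}]$ of $g(\t,\y(\t))$ then satisfy $[g]_0,\dots,[g]_{d-1}\in\mm_{0,\infty}$ while $[g]_d$ has nonzero value at $0$, hence is a unit in $\kkk[[\y_{m,\infty}]]$; thus over this complete ring $g(\t,\y(\t))$ is $\t$-distinguished, and Weierstrass preparation yields $g(\t,\y(\t))=E\cdot P$ with $E$ a unit and $P=\t^{d'}+c_{d'-1}\t^{d'-1}+\dots+c_0$ monic of some degree $d'\le d$ with all $c_i\in\mm_{0,\infty}$.

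Next I would linearise. I would run the Newton--Tougeron iteration solving $f_1=\dots=f_p=0$, but carried out over $\kkk[[\y_{m,\infty}]][[\t]]$ and with every ``division by $g$'' replaced by Weierstrass division by the monic polynomial $P$ --- so one step loses $d'$ in $\t$-order instead of requiring $g$ to be invertible, and, $P$ being monic, the divisions are unique and $P$ (hence $g$) is a non-zero-divisor in $\kkk[[\y_{m,\infty}]][[\t]]$. Starting from the truncated datum $\y^{(0)}$ whose $\ell$-th component ($\ell\le p$) is $\sum_{j\le2d}\y_{\ell j}\t^j$ and whose $i$-th component ($i>p$) is $\y_i(\t)$, the iteration converges in the $(\t,\mm_{0,\infty})$-adic topology and returns, for $\ell\le p$ and $j>2d$, series $\phi_{\ell j}\in\mm_{0,\infty}$ depending only on the finitely many low variables $\y_{kj'}$ ($1\le k\le m$, $1\le j'\le2d$) and on the variables $\y_{ij'}$ with $i>p$. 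The continuous automorphism $\Theta$ of $\kkk[[\y_{m,\infty}]]$ sending $\y_{\ell j}\mapsto\y_{\ell j}+\phi_{\ell j}$ for $\ell\le p$, $j>2d$ (the identity otherwise) then carries the coefficient-polynomials of $f_1,\dots,f_p$ into the ideal generated by $\{\y_{\ell j}:\ell\le p,\ j>2d\}$ together with the finitely many polynomials $h_\nu:=[f_i(\t,\y^{(0)}(\t))]_n$, $1\le i\le p$, $0\le n\le2d$, in the $2dm$ low variables --- the $h_\nu$ being exactly the obstructions to prolonging the degree-$2d$ truncation to an exact solution, so that the data which in Theorem~\ref{structuretheorem} was distributed over the strata $\YY_{jd''}$ here becomes concentrated. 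Finally, from the minor relations $g^{N}f_i\in(f_1,\dots,f_p)$ for $i>p$ (valid since $I_\YY$ localizes to $(f_1,\dots,f_p)$ at $\PP_0$) together with $g$ being a non-zero-divisor, one obtains after completion that the coefficient-polynomials of $f_{p+1},\dots,f_k$ lie in the closure of the ideal generated by those of $f_1,\dots,f_p$; hence $\Theta$ maps the closed ideal $\wh I_\infty$ onto the closure of $L:=(\{\y_{\ell j}:\ell\le p,\ j>2d\})+(h_1,\dots,h_r)$.

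To conclude, write $\kkk[[\y_{m,\infty}]]=C'\,\widehat{\otimes}_{\kkk}\,\kkk[[\v_\infty]]$, where $C':=\kkk[[\y_{kj'}:1\le k\le m,\ 1\le j'\le2d]]$ is the formal power series ring in the low variables and $\v_\infty$ enumerates the remaining, countably many variables. Quotienting by $\overline L$ deletes from the second factor the variables $\y_{\ell j}$ ($\ell\le p$, $j>2d$) --- leaving still countably many, which we again call $\v_\infty$ --- and imposes the relations $h_1,\dots,h_r$ on the first factor, whence $\wh\O_{\YY,0}\isom C[[\v_\infty]]$ with $C:=C'/(h_1,\dots,h_r)$. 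Since each $h_\nu$ vanishes at the origin, $C=\kkk[[\y_{kj'}:k\le m,\ j'\le2d]]/(h_1,\dots,h_r)$ is precisely the completion at the origin of the localization at the origin of the affine $\kkk$-algebra of finite type $\kkk[\y_{kj'}:k\le m,\ j'\le2d]/(h_1,\dots,h_r)$, which is the asserted description of $C$. (When $\AAA=\k\{\t\}$ one repeats the argument over the Banach algebras $\k\{\t\}_s$, $s>0$, as indicated just before the theorem.)

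The hard part will be the linearization step: setting up the Artin--P\l oski iteration over the non-Noetherian inverse-limit ring $\kkk[[\y_{m,\infty}]][[\t]]$, proving that it converges to a genuinely \emph{continuous} automorphism $\Theta$ compatible with the closure operation defining $\wh I_\infty$, and --- the quantitative point where the order $d$ of the minor really enters and where the finite-dimensional factor $C$ is actually produced --- controlling the redundant equations $f_{p+1},\dots,f_k$ tightly enough that only finitely many scalar obstructions $h_\nu$, in finitely many low coefficients, survive.
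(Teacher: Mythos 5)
Your overall strategy — pass to the universal formal neighbourhood $\kkk[[\y_{m,\infty}]]$, Weierstrass-prepare the minor $g$ into a monic $P$ over this complete base, linearize by a Newton--Tougeron iteration, and handle the redundant equations by a colon-ideal relation — is close in spirit to the paper's own proof of Theorems~\ref{cohen}/\ref{deformations}, which works with the universal deformation over a complete local ring $S$, reduces to a complete intersection, decomposes via the Weierstrass division for deformations, and then uses Drinfeld's device of introducing the coefficients of the Weierstrass polynomial of $g^{2}$ as auxiliary variables. Two of your substitutions are legitimate alternatives: the ring-theoretic translation (replacing $(\YY_S,y)$ by $\wh\O_{\YY,y}$) is the paper's own reformulation for $S=\kkk[[\s_{m,\infty}]]$, and the Newton iteration is a different but essentially equivalent packaging of the constant-rank/linearization isomorphisms $\psi_\dd,\phi_\dd$.

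The genuine gap is in the identification of the finite-dimensional obstructions $h_\nu$. You set $h_\nu:=[f_i(\t,\y^{(0)}(\t))]_n$ for $1\le i\le p$, $0\le n\le 2d$ and assert these generate, together with the high variables $\y_{\ell j}$ ($\ell\le p$, $j>2d$), the transformed ideal $\Theta^*(\wh I_\infty)$. This is incorrect in general. The actual obstruction to the iteration (equivalently, the defining condition of the paper's $\ZZ_d^*$) is that $\6^*_{\y^1}f(\zz)\cdot f(\zz)$ is divisible by $g(\zz)^2$ (or, with your choice $\DD_g=g^2\cdot\1_m$ and $a^2=0$, by $g(\zz)^3$), and over $\kkk[[\y_{m,\infty}]]$ this is \emph{not} the same as the truncation condition $f(\zz)\equiv 0 \pmod{\t^{2d+1}}$: the two differ whenever the rows of $\6^*_{\y^1}f$ have unbalanced $\t$-orders. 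A simple check with $p=2$ and $\6_{\y^1}f\equiv\diag(\t,1)$ near $y=0$ (so $g=\t$, $d=1$) shows the correct obstructions are $\ord f_1\geq 4$, $\ord f_2\geq 3$, whereas your list $n\le 2d=2$ misses $[f_1]_3$ outright. Compounding this, the correct obstruction is the vanishing of the remainder of Weierstrass division of $\6^*_{\y^1}f(\zz)\cdot f(\zz)$ by $P^2$, and the coefficients of that remainder are \emph{formal power series} in the low $\y_{kj'}$ — because the Weierstrass coefficients $c_i$ of $P$ are themselves power series — so it is not clear that $C$ has the required finite-type presentation. This is exactly the problem that Drinfeld's trick solves: adjoin the coefficients $\wt u_i$ of the Weierstrass polynomial of $g(\wt\zz)^2$ as new variables and perform \emph{polynomial} division by $\t^{2d}+\sum\wt u_i\t^i$, so that the obstruction equations become polynomials in $(\zz,\wt u)$. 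Your proposal does not introduce these auxiliary variables, so the claim that $C$ is the completion of a localization of a finite-type $\kkk$-algebra remains unproved. (Two smaller slips: at an arq.regular $y=0$, the Weierstrass degree is exactly $d$, not merely $\le d$; and the localization $I_\YY\cdot\BB_{\PP_0}=(f_1,\dots,f_p)\cdot\BB_{\PP_0}$ only guarantees some colon element $h\notin\PP_0$ with $hf_i\in(f_1,\dots,f_p)$, not that $h$ can be taken to be a power of $g$; to run your non-zero-divisor argument you would first Weierstrass-prepare that $h$ in the same way as $g$.)
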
 

%-------------------------------------------------
%          REMARK  ON COHEN  + BOURQUI SEBAG
%-------------------------------------------------

\begin{rmk} The ring $\wh\O_{\YY,y}$ can be characterized by deformations of $y$ over test-algebras (local algebras with nilpotent maximal ideals), and this is the way how Grinberg-Kazhdan and Drinfeld prove the respective factorization result \cite{GK, Dr, BH}. Additional information on this result can be found in \cite{BS1, Bou, BNS, CNS, NS}. In the present text, we will establish the factorization more generally for {\it all} deformations of $y$, with parameters varying in an arbitrary complete local ring $S$ (the topology of $S$ need not be defined by the powers of an ideal); see Theorem \ref{deformations} for the precise formulation.%
\footnote{ The completeness assumption on $S$ is crucial for the proof to work. We do not need, however, the nilpotence of the maximal ideal.  The referee has informed us that the assumption of nilpotence is also not required for Drinfeld's proof.}\medskip

In the paper \cite{BS1}, Bourqui and Sebag have shown that at an {\it \arqsingular} point $y$, \an \arquile variety $\YY$ may not admit a factorization of its formal neighborhood as above. They prove this for the origin $y=0$ of the \arquile subvariety $\YY$ of $\AAA^2$ given as the arc space $X_\infty$ of the plane curve $X:\y_1^2+\y_2^2=0$ over a field not containing a square root of $-1$. The constant arc $y(\t)=0$ is entirely contained in the singular locus of $X$ and hence an \arqsingular point of $\YY$, in the sense defined above. We present in the section on triviality an approach of how one could try to prove directly that, for all hypersurfaces $X\subset \A^m_\kkk$ defined by Brieskorn polynomials $\y_1(\t)^{c_1}+\ldots +\y_m(\t)^{c_m}=0$ over a field of zero characteristic with exponents $c_i\geq 2$, the formal neighborhood of $\YY$ at $0$ does not even admit a one-dimensional smooth factor.\medskip

It is tempting to try to extend the Grinberg-Kazhdan-Drinfeld theorem and the observation of Bourqui and Sebag to an equivalence statement in the spirit of Cohen's structure theorem: A constant arc $y(\t)= const$ of an arc space $\YY=X_\infty$ is \arqregular if and only if $\wh \O_{\YY,y}$ equals a formal power series ring in countably many variables over a (possibly singular) completion of the localization of a $\kkk$-algebra of finite type. \medskip

%This type of question will be pursued in the forthcoming paper \cite{CH}.\medskip

For non-constant arcs one has to be cautious because of the following example of Hickel, cf.~\cite{Hi}, example 4.6. Consider the Whitney-umbrella $X$  in $\A^3_\kkk$ defined by $\y_1^2-\y_2^2\cdot \y_3=0$, and the arc $y(\t)=(0,0,\t)$. It is entirely contained in the singular locus of $X$, which is the $\y_3$-axis. Let $\wt y(\t)=(a(\t), b(\t), \t+c(\t))$ be a deformation of $y(\t)$, with $a(0)=b(0)=0$ and $c(\t)$ of order at least $2$ in $\t$. Substitution in the defining equation gives
$$a^2(\t)-b^2(\t)\cdot (\t+c(\t))=0,$$
which has, by comparison of orders, the only solution $a(\t)=b(\t)=0$, and $c(\t)\in \t^2\cdot\kkk[[\t]]$ arbitrary. Therefore any deformation of $y(\t)$ of this type already lies in the arc space $({\rm Sing}\, X)_\infty$ of the singular locus of $X$. 
\end{rmk}

%-------------------------------------------------
%           OUTLINE OF PROOF
%-------------------------------------------------

{\it Outline of the proof of the structure theorem} : We briefly describe how the stratification of the variety $\YY$ and the factorizations of the strata $\YY_{jd}$ in Theorem \ref{structuretheorem} are obtained.%
\footnote{ The term {\smallit stratification} is used loosely here: the textile closures of strata are in general not finite unions of strata, but just countable unions.}
%
%-------------------------------------------------
%
Instead of proving directly that the strata $\YY_{jd}$ are isomorphic to cartesian products $\ZZ_{jd}\times\AAA^{s}$ we rather work with the defining system of equations $f(\t,\y(\t))=0$. We will show that such a system of equations can be {\it linearized} locally after a suitable stratification of the ambient space $\AAA^m$ by a suitable textile isomorphism  -- but this is only possible up to a {\it finite dimensional} part. From this it will then be relatively easy to deduce that the zeroset $\YY$ has the required cartesian product structure on each stratum. Let us give some more details of how the linearization works.\medskip

%-------------------------------------------------

Define, for $d\in\N$, and a suitably chosen minor $g$ of the relative Jacobian matrix $\6_\y f=(\6_{\y_i}f_j)$ of $f$ with respect to $\y$, subsets $\SS_d=\SS_d(g)$ of $\AAA^m$ as the sets of vectors $y\in\AAA^m$ for which $g$ has non-zero evaluation $g(y)$ of order $d$ as a power series in $\t$. The sets $\SS_d$ are \ttextile cofinite and \ttextile locally closed in $\AAA^m$ in the sense that the condition $\ord\,g(y)=d$ carries only on the (finitely many) coefficients $y_{ij}$, $1\leq i \leq m$, $j\leq d$, of vectors $y\in\AAA^m$. It is not difficult to see that the order condition is given by (finitely many) polynomial equalities $=$ and inequalities $\neq$ on these coefficients.%
\footnote{ Sets given by such order conditions are also called {\smallit definable} in the literature.}
\medskip

On each stratum $\SS_d$, the vectors $y\in\SS_d$ will be decomposed uniquely by Weierstrass division into $y=\zz+\vv$ where $\zz$ is a polynomial vector of degree $\leq d$ in $\t$ and $\vv$ is a power series vector whose components have all order $>d$ in $\t$. As $y$ varies in the stratum $\SS_d$, the first summand $\zz$ will vary in a finite dimensional space. This type of decomposition is a standard technique in Artin approximation and in various results on arc spaces. It will allow us to show that $\ff(y)=\ff(\zz+\vv)$ can be {\it linearized} with respect to $\vv$ by an isomorphism $\chi_d$ of $\SS_d$. %[We may want to replace $\chi_d$ by a true automorphism of $\SS_d$, composing $\chi_d$ with $\psi_d: \ZZ_d\times \VV_d\map \SS_d$.] 
The precise  statement is as follows (see Theorem \ref{linearization} for more details).
\goodbreak

%-------------------------------------------------
%           LINEARIZATION THEOREM INTRO
%-------------------------------------------------

\begin{theorem} [Linearization of \arquile maps] \label{linearizationintro} Let be given \an \arquile map $\ff:\AAA^m\map \AA^\kk,\, y(\t)\map f(\t,y(\t))$  induced by the substitution of the $\y$-variables by a vector $y(\t)$ of power series in a given power series vector $f\in\BB^\kk$. Assume that $\kk\leq m$ and let $g$ be a $(\kk\times \kk)$-minor of the relative Jacobian matrix $\6_\y f$ of $f$. Set $\SS_d= \{y\in\AAA^m,\, g(y)\neq 0$,\, $\ord\, g(y)=d\}$. 

There then exists, for each $d\in\N$, a textile isomorphism $\chi_d:\ZZ_d\times \VV_d\mapname{\isom} \SS_d$ over $\ZZ_d$ from the cartesian product of the quasi-affine algebraic variety $\ZZ_d=\{\zz\in \k[\t]_{\leq d}^m,\, \ord\, g(\zz)=d\}$ with the $\AAA$-module $\VV_d=\langle \t\rangle^d\cdot\AAA^m$ such that the composition
$$\ff\circ\chi_d:\ZZ_d\times\VV_d\mapname{\isom} \SS_d\mapname{f_\infty} \AA^\kk$$
is linear in the second component and of the form
$$(\ff\circ\chi_d)(\zz,\vv) = f(\zz)+\6_\y f(\zz)\cdot\vv.$$
\end{theorem}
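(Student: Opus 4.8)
The plan is to mimic, in geometric language, the iteration that appears in the classical proofs of Greenberg and Artin of approximation. The starting point is the Taylor expansion of $f$ around the polynomial truncation: for $y=\zz+\vv$ with $\zz\in\k[\t]_{\le d}^m$ and $\vv\in\VV_d=\langle\t\rangle^d\cdot\AAA^m$ we have
$$
f(\t,\zz+\vv) = f(\t,\zz) + \6_\y f(\t,\zz)\cdot\vv + R(\zz,\vv),
$$
where $R$ collects the terms of order $\ge 2$ in $\vv$, hence $R(\zz,\vv)\in\langle\t\rangle^{2d}\cdot\AAA^\kk$ (each entry of $\vv$ has $\t$-order $>d$, and $R$ is at least quadratic). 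The idea is to absorb $R$, together with the ``spurious'' higher-order contribution coming from the fact that $\6_\y f(\t,\zz)$ is not invertible but only of rank $\kk$ with invertible $\kk\times\kk$ minor $g(\zz)$ of order exactly $d$, into a change of coordinates $\vv\mapsto\vv'$ of $\VV_d$ depending textilely on $\zz$. Concretely, one wants an automorphism $\psi_{\zz}$ of $\VV_d$, textile in $(\zz,\vv)$ and equal to the identity at $\vv=0$, such that $f(\t,\zz+\psi_{\zz}(\vv)) = f(\t,\zz)+\6_\y f(\t,\zz)\cdot\vv$ identically; setting $\chi_d(\zz,\vv)=(\zz,\zz+\psi_{\zz}(\vv))$ then gives the claim, with $\ZZ_d$ being exactly the locus of $\zz$ where $\ord\,g(\zz)=d$ (so $\SS_d$ is the image: every $y\in\SS_d$ decomposes uniquely as $\zz+\vv$ by Weierstrass division).

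The construction of $\psi_{\zz}$ is the Newton/Greenberg iteration made into a map. Write the $\kk\times\kk$ submatrix of $\6_\y f(\t,\zz)$ realizing the minor $g(\zz)$; by Cramer's rule its inverse is $g(\zz)\inv$ times a matrix of cofactors, all polynomial in $\zz$, and since $\ord_\t g(\zz)=d$, division by $g(\zz)$ maps $\langle\t\rangle^{d+r}\cdot\AAA$ into $\langle\t\rangle^{r}\cdot\AAA$. One solves the equation $\6_\y f(\t,\zz)\cdot\vv + R(\zz,\vv) = \6_\y f(\t,\zz)\cdot\vv'$ for $\vv$ as a function of $\vv'$ (only the $\kk$ coordinates of $\vv,\vv'$ selected by the minor are genuinely constrained; the remaining $m-\kk$ coordinates are free and are simply carried along) by the fixed-point recursion
$$
\vv^{(0)}=\vv',\qquad \vv^{(n+1)} = \vv' - \bigl(\6_\y f(\t,\zz)\bigr)^{+}\, R(\zz,\vv^{(n)}),
$$
where $(\,\cdot\,)^{+}$ denotes the cofactor-over-$g(\zz)$ partial inverse just described. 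Because $R$ is quadratic in $\vv$ and division by $g(\zz)$ costs only $d$ orders while $R$ gains at least $2d$ (more precisely, $R(\zz,\vv^{(n)})-R(\zz,\vv^{(n-1)})$ has $\t$-order strictly larger than that of $\vv^{(n)}-\vv^{(n-1)}$), the iteration converges $\t$-adically; the limit $\vv=\psi_{\zz}\inv(\vv')$ depends polynomially — hence textilely — on the coefficients of $\zz$ and $\vv'$, coefficient by coefficient, and $\psi_{\zz}$ is invertible with textile inverse by the same argument run in reverse. One must check that $\psi_{\zz}$ maps $\VV_d$ into $\VV_d$ (clear, since the correction term has order $\ge 2d-d=d+$something $>d$) and preserves the prescribed quality of $\AAA$: convergence and algebraicity of the solution of a Newton recursion with convergent, respectively algebraic, data is standard (this is precisely the content of P\l oski's and Artin's theorems invoked earlier in the paper), and for $\k\{\t\}_s$ one restricts to a small $s$-ball so that the norms stay finite.

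The main obstacle is the bookkeeping that makes the iteration a genuine textile isomorphism rather than merely a statement about individual solutions: one has to verify (i) that the whole construction is uniform in $\zz$ over the quasi-affine variety $\ZZ_d$ — this uses crucially that $\ord_\t g(\zz)=d$ is constant on $\ZZ_d$, so the ``number of orders lost to division'' is uniformly $d$ and the recursion's contraction rate is uniform; (ii) that each coefficient of $\psi_{\zz}(\vv)$ is a polynomial in finitely many coefficients of $\zz$ and $\vv$ (true because the $n$-th coefficient of the limit is already reached at a finite stage of the recursion, by the strict order increase); and (iii) that the $m-\kk$ ``free'' directions and the interplay between the selected-minor block and its complement are handled correctly so that $\chi_d$ is a bijection onto all of $\SS_d$ and not just onto a subset. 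Granting these, linearity of $\ff\circ\chi_d$ in $\vv$ and the explicit formula $(\ff\circ\chi_d)(\zz,\vv)=f(\zz)+\6_\y f(\zz)\cdot\vv$ are immediate from the defining property of $\psi_{\zz}$, and $\chi_d$ is an isomorphism over $\ZZ_d$ by construction. The $\t$-adic homeomorphism statement follows since textile maps are $\t$-adically continuous, as noted in Section~\ref{section_concepts}.
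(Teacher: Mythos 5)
Your plan follows essentially the same route as the paper's proof of Theorems \ref{linearization} and \ref{generallinearization}: Weierstrass decomposition $y=\zz+\vv$ on the stratum, Taylor expansion, absorption of the quadratic remainder by means of the cofactor matrix of the selected $\kk\times\kk$ block together with the order count (the remainder has order at least $2d+2$ while division by $g(\zz)$ costs only $d$), and inversion of an ``identity plus higher order'' perturbation. Two points of comparison. First, the justification you give for the convergent and algebraic cases is circular in the context of this paper: Artin's and P\l oski's approximation theorems are \emph{consequences} of the linearization theorem here (Corollary \ref{artin}, Remark \ref{proofartin}), so they cannot be invoked in its proof. The correct tool --- and the one the paper uses --- is the finite-dimensional inverse function theorem \ref{inv} (Hensel's lemma in the algebraic case). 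Second, the paper applies that theorem only after rescaling $\vv=\DD_g(\zz)\cdot a$ and proving (Definition \ref{divisionmoduledef}(ii), Proposition \ref{divisionmodule}) that the entire higher-order remainder equals $\6_\y f(\zz)\cdot\DD_g(\zz)\cdot p(\zz,a)$ for a single power series vector $p(\t,\z,\aa)$ of order at least $2$ in $\aa$, independent of the particular $\zz$ and $a$; the inverse of $a\mapsto a+p(\zz,a)$ is then itself given by substitution of a fixed power series vector, so uniformity in $\zz$ over $\ZZ_d$, coefficientwise polynomiality, and preservation of the convergent or algebraic quality --- exactly your obstacles (i)--(iii) --- come for free instead of having to be tracked through the recursion. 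Your pointwise-in-$\zz$ Newton iteration computes the same inverse but leaves precisely these verifications open. You should also record explicitly the small but necessary fact (Lemma \ref{principalideal}, Lemma \ref{image}) that the Weierstrass remainder $\zz$ of a vector $y\in\SS_d$ again satisfies $\ord\,g(\zz)=d$; this is what makes $\chi_d$ surjective onto all of $\SS_d$.
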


%-------------------------------------------------

This statement can be rephrased by saying that $\ff$ is linearized on each stratum $\SS_d$ {\it up to a finite dimensional summand} by an isomorphism of the source space. If the minor $g$ is identically zero on $\AAA^m$ all sets $\SS_d$ are empty and the statement is vacuous. So the interesting case occurs when $g$ is different from zero. The assumption that $g$ stems from a submatrix of $\6_\y f$ of size $\kk$ (and hence the number $\kk$ of components of $f$ is less than or equal to the number $m$ of $\y$-variables) is essential for the linearization to work. A substantial part of the proof of Theorem \ref{structuretheorem} is dedicated to establishing this assumption on the sets of a suitable partition of $\YY(f)$.\medskip

The construction of the linearization of $\ff$ is the main technical ingredient of the paper. It will be given explicitly by describing the isomorphism $\chi_d$ as a composition of a $\kkk$-linear isomorphism given by the Weierstrass division and an invertible \arquile map. Here, the polynomial vector $\zz$ plays the role of a parameter, and $\ff\circ\chi_d$ will depend in \an \arquile way on $\zz$ (i.e., is a power series in $\zz$), whereas it is linear in $\vv$. The isomorphism $\chi_d$ is textile in both $\vv$ and $\zz$. Once the linearization of $\ff$ with respect to $\vv$ is achieved a quite direct  argument establishes the cartesian factorization of the stratum $\YY_d=\YY(f)\cap\SS_d$ of $\YY(f)$ as described in Theorem \ref{structuretheorem}.
\medskip

For arbitrary vectors $f$, the zeroset $\YY(f)$ has first to be decomposed into the \arqregular and \arqsingular locus. On the first, there exists a covering by \arquile open subsets for which linearization works. On the second, one iterates the decomposition into the \arqregular and \arqsingular locus and then argues by Noetherian induction to construct the required partition. Combining all strata of all subsets obtained in this way then yields the required partition $\YY=\bigcup \YY_i$ together with the individual cartesian product factorizations as described in the structure theorem \ref{structuretheorem}.\medskip

\goodbreak
\medskip

%-------------------------------------------------
%       ORGANIZATION
%-------------------------------------------------

{\it Organization of paper} : After a compilation of definitions in section 2, we recall the Weierstrass division theorem in section 3 and adapt the statement to our purposes. This can be skipped on first reading, as well as the subsequent section 4 on division modules. The main constructions appear in section 5, where the linearization theorem for \arquile maps between power series spaces is formulated and proven. This is the central part of the paper. The proof is elementary but a bit tricky. This result is then exploited in sections 6, 7 and 9 to establish the various factorization theorems. The intermediate section 8 constructs the appropriate covering and stratification of \an \arquile variety. In section 10 we illustrate our techniques in an explicit example. The last section 11 discusses the analytic triviality of \arquile varieties, i.e., the appearance of a smooth factor in the formal neighborhood of a point. A list of symbols can be found after the references.\medskip

%-------------------------------------------------
%           THANKS
%-------------------------------------------------

{\it Acknowledgements}: The initial phase of the work on this article was carried out in a cooperation of the authors with G.~Rond. Numerous discussions with C.~Chiu helped to clarify various ambiguities. We are also indebted to D.~Popescu, B.~Lamel, S.~Perlega, H.~Kawanoue, F.~Castro-J\'\i menez, M.E.~Alonso, M.~Spivakovsky, and K.~Slavov for many helpful conversations and suggestions.\medskip

Special thanks go to the anonymous referee: It is seldom to obtain such a competent and detailed report as it was the case here. The referee understood the various facets of the article in depth; her/his comments and suggestions were thoughtful and  invaluable.\medskip

%-------------------------------------------------
%       ARQUILE SUBVARIETIES OF POWER SERIES SPACES
%-------------------------------------------------

\noindent 
\section{\Arquile varieties}\label{arquile}

%-------------------------------------------------
%       SPACES OF POWER SERIES 
%-------------------------------------------------

In this section, we collect the basic material on \arquile varietes needed in later sections. To easen the orientation of the reader, some overlap with section \ref{section_concepts} was admitted.\medskip

Our ground field $\kkk$ will be a perfect field of arbitrary characteristic. It will be assumed to be equipped with a valuation $\abs -:\kkk\map \R$ whenever we talk about convergent power series. To easen the notation we often suppress the dependence of power series $y=y(\t)$ on the variable $\t$. Vectors $f=f(\t,\y)$ will be written as columns of length $k$, vectors of variables $\y$ and $\z$ and of power series $y=y(\t)$ as rows of length $m$. Accordingly, the relative Jacobian matrix $\6_\y f$ of $f$ will be a $(\kk\times m)$-matrix.\medskip

%-------------------------------------------------

{\it Spaces of power series}: 
We denote by $\AAA$ the ring of formal, respectively convergent or algebraic power series $y(\t)$ in a single variable $\t$ over $\kkk$ and without constant term, $y(0)=0$.%
\footnote{ All results hold, with the appropriate modifications, also for the ring $\k\{\t\}_s$ of power series  with finite $s$-norm. This case will not be mentioned explicitly in the sequel, since it requires slightly different statements (e.g., to restrict the strata of \arquile varieties $\YY$ to neighborhoods of $y$ in the Banach space topology).}
This is a commutative ring without one element $1$. For $m\in \N$ we will consider vectors $y=(y_1,...,y_m)\in\AAA^m$ of power series without constant terms and call accordingly $y_{ij}=(y_{ij},\, 1\leq i \leq m,\, j\geq 1)\in(\kkk^{\N_{>0}})^m$ their coefficient vector. We will interpret $\AAA^m$ as an infinite dimensional affine space over $\kkk$. In the setting of formal power series, its ``coordinate ring'' is the polynomial ring $\kkk[y_{m,\infty}]=\kkk[\y_{ij},\, 1\leq i \leq m,\, j\geq 1]$ in countably many variables. %
\medskip

%[correct the notation] For $y\in\AAA^m$ with coefficient vector $(e_{ij})$, let $\mm_y$ be the maximal ideal of $\kkk[\y_{\{ij\}}]$ generated by the differences $\y_{ij}-e_{ij}$, for $i\geq 1$ and $j=1,...,m$. The localization $\kkk[\y_{\{ij\}}]_y:=\kkk[\y_{\{ij\}}]_{\mm_y}$ will be called the {\it local ring} of $\kkk[\y_{\{ij\}}]$ at $y$, or simply the {\it local ring} of $y$ as a point of $\AAA^m$. The inverse limit of the rings $\kkk[\y_{\{ij\}}]/\mm_y^k$ will be denoted by $\wh\O_y$. It consists of formal power series in the differences $\y_{ij}-e_{ij}$ [in the sense of Bourbaki, Fascicule de R\'esultats, i.e., formal infinite sums of homogeneous polynomials]. These series may carry on infinitely many variables $\y_{ij}$, but each of its homogeneous terms involves only finitely many $\y_{ij}$. 

%-------------------------------------------------
%         ARQUILE MAPS
%-------------------------------------------------

{\it \Arquile maps}: 
For variables $\y=(\y_1,...,\y_m)$, let $\BB$ denote one of the rings $\kkk[[\t,\y]]$, $\kkk\{\t,\y\}$ or $\kkk\langle\t,\y\rangle$ of power series in $\t$ and $\y$. If $f$ is in $\BB$ and $y=y(\t)\in\AAA^m$, we may replace in $f$ the $\y$-variables by the vector $y(t)$, which we abbreviate by $f(y)=f(\t,y(\t))$. Here and in the sequel, the qualities of $\AAA$ and $\BB$ (formal, convergent, or algebraic) are tacitly accorded. Every vector $f\in\BB^\kk$ induces \an \arquile map
$$\begin{array}{rcl} \ff: \AAA^m& \langlew & \AA^k,\\
y & \langlew & f(y)\end{array}$$
by evaluation at $y$. If $f$ is a linear polynomial in $y_1,...,y_m$ with coefficients in $\AAA$, the induced map $\ff$ is $\AAA$-linear. If $\SS$ is a subset of $\AAA^m$ we may restrict $\ff$ to $\SS$ to get \an \arquile map $\ff :\SS\map \AA^\kk$. \medskip

%-----------------------------------------------------------
%        INVERSE FUNCTION THEOREM
%-------------------------------------------------

\begin{theorem}[Inverse function theorem] \label{inv} For any vector $p(\t,\y)\in\BB^m$ whose components are all of order at least $2$ in $\t,\y$, the \arquile map
$$\begin{array}{rcl} \phi: \AAA^m& \langlew & \AAA^m,\\
y & \langlew & y+p(y),\end{array}$$
is \an \arquile isomorphism.  
\end{theorem}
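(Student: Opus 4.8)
The plan is to construct the inverse of $\phi$ explicitly as a power series vector $q(\t,\y)\in\BB^m$ with components of order at least $2$, satisfying the functional equation $\y + q(\t,\y) + p(\t,\y+q(\t,\y)) = \y$, i.e. $q = -p\circ(\id+q)$ as maps in the variables $(\t,\y)$; then $\psi: y\mapsto y+q(y)$ will be the \arquile map inverse to $\phi$. First I would solve this equation formally for the Taylor coefficients of $q$ in the $\y$-variables (equivalently, in the combined $(\t,\y)$-grading). Writing $q = \sum_{|\nu|\geq 2} q_\nu(\t)\,\y^\nu$ and comparing terms of each total degree, the degree-$n$ coefficient of $q$ is determined by the coefficients of $p$ of degree $\leq n$ together with the coefficients of $q$ of degree $< n$; the key point, exactly as in the classical inverse/implicit function theorem, is that the hypothesis $\ord_{\t,\y}\,p\geq 2$ makes this recursion triangular and fixpoint-free, so the coefficients $q_\nu(\t)\in\kkk[[\t]]$ (or $\kkk\{\t\}$, $\kkk\langle\t\rangle$, according to quality) are uniquely and effectively determined.

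The substantive point is convergence/quality: one must check that the formally-defined $q$ actually lies in $\BB$ of the correct quality, not just in $\kkk[[\t,\y]]$. For the formal case this is automatic. For the convergent and algebraic cases I would invoke the classical implicit function theorem in those categories: the map $H(\t,\y,\z) = \z - \y + p(\t,\z)$, regarded as a function of $(\t,\y,\z)$, vanishes at the origin and has $\partial_\z H$ equal to the identity at the origin, so by the analytic (resp. Nash/Henselian) implicit function theorem there is a unique $\z = \z(\t,\y)$ of the same quality with $\z(0,0)=0$ solving $H=0$; this $\z$ is our $\id + q$, and one reads off that $q=\z-\y$ has order $\geq 2$ since $p$ does. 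Alternatively, for the convergent case one can run the majorant method directly on the recursion above. Either way the output is a genuine element of $\BB^m$ inducing an \arquile map $\psi:\AAA^m\to\AAA^m$.

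It then remains to verify that $\psi$ is a two-sided inverse of $\phi$ as maps on $\AAA^m$. This follows from the functional identities $(\id+q)\circ(\id+p) = \id$ and $(\id+p)\circ(\id+q) = \id$ in $\BB^m$ (the first being how $q$ was defined; the second following either from the uniqueness in the implicit function theorem or by the same triangular-recursion argument applied to the left inverse and then noting left and right inverses coincide). Substituting an arbitrary $y\in\AAA^m$ into these identities — which is legitimate because $y(0)=0$, so the composition of power series is well-defined and associative in this setting — gives $\psi(\phi(y)) = y$ and $\phi(\psi(y)) = y$. Finally, both $\phi$ and $\psi$ are \arquile by construction, hence textile, hence $\t$-adically continuous, so $\phi$ is an \arquile isomorphism.

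I expect the main obstacle to be the bookkeeping that establishes that the formal solution $q$ of $q=-p\circ(\id+q)$ has the required quality and order simultaneously in all four settings; once the reduction to the classical implicit function theorem (in the analytic and Nash categories) and the grading argument (for order $\geq 2$) are in place, the verification that $\psi\circ\phi = \phi\circ\psi = \id$ is routine manipulation of substitutions of series without constant term.
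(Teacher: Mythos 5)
Your proposal is correct and follows essentially the same route as the paper: both reduce the statement to the finite-dimensional implicit (or inverse) function theorem in the formal, convergent, and algebraic categories to produce a power series vector $v=\y+q(\t,\y)\in\BB^m$ of the right quality with $v+p(\t,v)=\y$, and then obtain the inverse \arquile map by substitution of vectors $y(\t)$ with $y(0)=0$. Your additional care about the two-sidedness of the inverse and the order $\geq 2$ of $q$ is a sound (and slightly more explicit) version of what the paper dismisses as clear.
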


%-------------------------------------------------
%        REMARK   GLOBAL
%-------------------------------------------------

\begin{rmk} In contrast to the finite dimensional setting, and due to the fact that we restrict to power series without constant term, the map $\phi$ is a {\it global} \arquile isomorphism.
\end{rmk}
%-------------------------------------------------

\begin{proof} Set $u= \y+p(\t,\y)\in\BB^m$ so that $\phi=\uu$. We have $u(0,0)=0$ and the relative Jacobian matrix $\6_{\y}u(0,0)$ is invertible by the assumption on $p$. Let $v\in\BB^m$ be the unique vector with $v(0,0)=0$ and $u(\x,v(\x,\y))=\y$ as given by the finite dimensional implicit function theorem in the formal, convergent and algebraic context (in the last case, it is equivalent to Hensel's lemma, see \cite{Nag}). Then $v$ defines a globally defined \arquile map $\psi=v_\infty:\AAA^m \map  \AAA^m$ via $y\map v(y)$. Clearly $\psi\circ\phi$ is the identity map on $\AAA^m$.\end{proof} 

%-------------------------------------------------
%              ARQUILE  VARIETIES
%-------------------------------------------------

{\it \Arquile varieties}: Every ideal $I$ of $\BB$ defines a subset $\YY$ of $\AAA^m$ by
$$\YY=\YY(I)=\{y\in\AAA^m,\, f(y)=0 \text{ for all } f\in I\}.$$
We call such sets {\it \arquile subvarieties} of $\AAA^m$, or simply {\it \arquile  varieties}. We also say that these sets are {\it \arquile closed} in $\AAA^m$. They are the fibers $f_\infty^{-1}(0)$ of \arquile maps $f_\infty:\AAA^m\map\AAA^\kk$ induced by a vector $f\in\BB^\kk$ with $f(0)=0$, where the components of $f$ generate the ideal $I$ in $\BB$. Note that \arquile open subsets of $\AAA^m$ are $\t$-adically open.\medskip

Given two ideals $I$ and $J$ of $\BB$ we have $\YY(I\cap J)=\YY(I)\cup\YY(J)$ and $\YY(I+ J)=\YY(I)\cap\YY(J)$. It is clear that if $\sqrt I$ denotes the radical of $I$, then $\YY(I)=\YY(\sqrt I)$. So we may assume without loss of generality that \arquile subvarieties $\YY(I)$ are defined by radical ideals $I=\sqrt I$.%
\footnote{ In this article, we defer from considering $\YY$ as a scheme. All our \arquile varieties will just be considered as subsets of $\AAA^m$, together with their equations, and are thus ``affine'' varieties. There will be no need to glue them or to define them as abstract varieties.}
For $\YY$ an arbitrary subset of $\AAA^m$ we define 
$$I_{\YY}=\{f\in\BB,\, f(y)=0 \text{ for all } y\in \YY\}$$
as the ideal of $\BB$ of power series vanishing upon the substitution of the variables $\y$ by elements $y$ of $\YY$. The ideal $I$ is called {\it saturated} if $I$ equals its {\it saturation} $I_{\YY(I)}$. The saturation of an arbitrary ideal $I$ is always radical; for the converse in the convergent case, see Theorem \ref{saturation} below.

%-------------------------------------------------
%              DECOMPOSITION  PROPOSITION
%-------------------------------------------------

\begin{prop} \label{primedecomposition} Let $I$ be a saturated proper ideal of $\BB$, with irredundant prime decomposition $I=I_1\cap \ldots \cap I_s$. Then all $I_i$ are saturated,  say $\YY(I)=\YY(I_1)\cup\ldots\cup \YY(I_s)$ with $I_i=I_{\YY(I_i)}$, and $\YY(I_i)\neq \emptyset$ for all $i$. 
\end{prop}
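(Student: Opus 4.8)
The plan is to reduce the statement to the single-ideal case, i.e. to show: if $I$ is prime and saturated then $\YY(I)\neq\emptyset$ and $I_{\YY(I)}=I$. Granting this, the general case follows quickly. Indeed, suppose $I=I_1\cap\cdots\cap I_s$ is the irredundant prime decomposition of a saturated ideal $I$. The irredundancy means that for each $i$ there exists $g_i\in\BB$ with $g_i\in I_j$ for all $j\neq i$ but $g_i\notin I_i$; hence the localization $(\BB/I_i)_{g_i}$ agrees with $(\BB/I)_{g_i}$, and saturation is a local property in this sense. Concretely, I would argue: first establish that each $I_i$ is saturated by showing $I_{\YY(I_i)}\subset I_i$ (the reverse inclusion is automatic). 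Then $\YY(I)=\YY(I_1\cap\cdots\cap I_s)=\YY(I_1)\cup\cdots\cup\YY(I_s)$ by the union formula recalled just before the statement. Nonemptiness of each $\YY(I_i)$ then follows from the prime, saturated case. Finally, the decomposition $\YY(I)=\bigcup\YY(I_i)$ is irredundant as a union of varieties precisely because the $I_i$ are distinct saturated primes: if $\YY(I_i)\subset\YY(I_j)$ then, taking ideals of functions vanishing on these sets and using $I_{\YY(I_i)}=I_i$, one gets $I_j\subset I_i$, forcing $i=j$ by irredundancy of the prime decomposition.

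The core step is therefore: \emph{a saturated prime ideal $I\subsetneq\BB$ has nonempty zeroset $\YY(I)$, and equals $I_{\YY(I)}$.} The equality $I=I_{\YY(I)}$ is literally the definition of saturated, so what actually must be proven is $\YY(I)\neq\emptyset$ — but of course, if $\YY(I)=\emptyset$ then $I_{\YY(I)}=\BB\neq I$, contradicting saturation; so nonemptiness is the one thing with content. To produce a point of $\YY(I)$, I would pass to the domain $R=\BB/I$ and seek a $\kkk$-algebra homomorphism $R\to\AAA$ compatible with $\t\mapsto\t$, i.e. a continuous $\kkk[[\t]]$-point (resp. $\kkk\{\t\}$- or $\kkk\langle\t\rangle$-point). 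Such a homomorphism amounts to choosing images $y_i(\t)\in\AAA$ for the classes of the $\y_i$ so that every $f\in I$ satisfies $f(\t,y(\t))=0$. The existence of such a map is where one invokes that $I$ is a \emph{saturated} ideal rather than merely radical: by the construction of saturation (and, in the convergent case, Theorem \ref{saturation} cited immediately afterwards), the saturation of $I$ is by definition the set of all $f$ vanishing on \emph{all} arc-points of $\YY(I)$, so saturation $I=I_{\YY(I)}\neq\BB$ forces $\YY(I)$ to be nonempty outright — this is essentially a tautology once the notion of saturation is in place. Thus the proof is really: (i) saturated $\Rightarrow$ proper-saturated has a point by definition of $I_{\YY}$ being a \emph{proper} ideal; (ii) reduce multi-component case to single-component case via the union/intersection formulas and the localization argument above.

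The main obstacle I anticipate is the verification that each component $I_i$ is saturated when only the intersection $I$ is assumed saturated. The inclusion $I_i\subset I_{\YY(I_i)}$ is free; for the reverse, take $f\in I_{\YY(I_i)}$ and choose (by irredundancy) $g_i\in\bigcap_{j\neq i}I_j\setminus I_i$. Then $g_i\cdot f$ vanishes on $\YY(I_i)$ and also, since $g_i\in I_j\subset I_{\YY(I_j)}$, on every $\YY(I_j)$ for $j\neq i$; hence $g_i f\in I_{\YY(I)}=I\subset I_i$. Since $I_i$ is prime and $g_i\notin I_i$, we conclude $f\in I_i$, as desired. Once this lemma is in hand, the rest is the bookkeeping indicated above; the argument works uniformly in the formal, convergent and algebraic settings because the only external inputs are the elementary identities $\YY(I\cap J)=\YY(I)\cup\YY(J)$, $\YY(I)=\YY(\sqrt I)$, and — in the convergent case — Theorem \ref{saturation}.
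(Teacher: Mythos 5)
Your proof is correct, and the heart of it differs from the paper's. Both arguments reduce everything to showing that each prime component $I_i$ is saturated (after which non-emptiness of $\YY(I_i)$ is immediate, exactly as you say: an empty zeroset would force $I_i=I_{\YY(I_i)}=\BB$, contradicting primeness). For that key step the paper argues by contradiction: if $I_1\subsetneq I_{\YY(I_1)}$, replace $I_1$ by the irredundant prime decomposition of its saturation, observe that the resulting intersection still has zeroset $\YY(I)$ and hence (by saturation of $I$) still equals $I$, and then invoke the uniqueness of irredundant prime decompositions to force $I_1$ to reappear among the new primes, which is absurd. You instead give a direct argument: pick $g_i\in\bigcap_{j\neq i}I_j\setminus I_i$ by irredundancy, note that $g_i f$ vanishes on all of $\YY(I)$ for any $f\in I_{\YY(I_i)}$, deduce $g_i f\in I_{\YY(I)}=I\subset I_i$, and conclude $f\in I_i$ by primeness. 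This is cleaner and more elementary, since it avoids the uniqueness statement for irredundant prime decompositions that the paper leans on implicitly. Two small cautions: the "localization" remark in your first paragraph is only heuristic (the separating-element argument in your last paragraph is the actual proof, so keep that one), and you should not invoke Theorem \ref{saturation} here even parenthetically, since its proof ultimately depends on consequences of this proposition and would make the reasoning circular; fortunately your argument never actually needs it. The extra claim that the union $\YY(I)=\bigcup\YY(I_i)$ is irredundant is not part of the statement but your justification of it is sound.
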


%-------------------------------------------------
%             DEF  COMPONENTS
%-------------------------------------------------

The \arquile varieties $\YY(I_i)$ will be called the {\it irreducible components} of $\YY$ (they are the irreducible components with respect to the arquile topology). While the prime components of a saturated ideal are again saturated, the saturation of a prime ideal need not be prime, cf. the example after the proof.

%-------------------------------------------------
%         PROOF PROPOSITION
%-------------------------------------------------

\begin{proof} Suppose that $I_1$ is not saturated and hence properly contained in $I_{\YY(I_1)}$, and let $I_{1,1},\dots, I_{1,r}$ be an irredundant prime decomposition of $I_{\YY(I_1)} = \bigcap_{j=1}^{r} I_{1,j}$. Then $I \subset \bigcap_{j=1}^{r} I_{1,j} \cap I_{2} \cap \dots \cap I_{s}$. Since the subvarieties of $\AAA^m$ associated to both ideals coincide and $I$ is saturated, we obtain that $I = \bigcap_{j=1}^{r} I_{1,j} \cap I_{2} \cap \dots \cap I_{s}$. This prime decomposition can be reduced to an irredundant one, and then $I_{1}$ has to appear in the decomposition. The inequality $I_{i} \neq I_{j}$ for $i\neq j$ implies that $I_{1} = I_{1,l}$ for some $l$, which leads to $I_{\YY(I_1)} \subset I_{1}$, contradictory to $I_1\subsetneq I_{\YY(I_1)}$. Hence all associated primes of $I$ are saturated. Finally, if $\YY(I_{i})$ would be empty, then $I_{i} = I_{\YY(I_{i})} = \BB$, which is not possible since $I_{i}$ is prime in $\BB$ and hence properly contained. 
\end{proof}

%-------------------------------------------------
%           EXAMPLE   SATURATION OF PRIME IDEAL NOT PRIME
%-------------------------------------------------

\begin{example}\label{examplereducible} A theorem of Kolchin asserts that if $X\subset\A^m_\kkk$ is an irreducible algebraic variety over a field $\kkk$ of zero characteristic, the (global) arc space $X_{\infty}$ consisting of arcs $y(\t)$ on $X$ with arbitrary starting point $y(0) \in X$ is irreducible in the textile topology on ${\mathcal A}^m=\kkk[[\t]]^m$, see \cite{Ko}, Chap.~IV, Prop.~10, or \cite{IK}, Lemma 2.12.%
\footnote{ The set of centered arcs $X_{\infty,0} = \{y  \in X_{\infty},\, \ y(0) = 0\}$ may nevertheless split into several components.}
\medskip

%---------------------------------
%             
%---------------------------------

In the case of convergent power series over $\kkk=\C$ and if the ideal $I$ is generated by series in $\C\{\y\}$ not depending on $\t$, being radical implies that $I$ is saturated, and $\YY(I)$ is irreducible if and only if $I$ is prime, see Cor.~\ref{radicalone}. And if one starts with a polynomial ideal $I \subset \C[\y]$, then $\YY(I)$ is irreducible if and only if $I\cdot\C\{y\}$ is prime, see Cor.~\ref{radicaltwo}.
\end{example}

%-------------------------------------------------
%              TEXTILE MAPS
%-------------------------------------------------

{\it Textile maps}: %
A map $\tau: U\subset \AAA^m\map \AA^\kk$ is called {\it textile} if each coefficient of the power series expansion of the images $\zz(\t)=\tau(y(\t))$ of vectors $y(\t)=(y_1(\t),...,y_m(\t))\in\AAA^m$ depends polynomially on finitely many of the coefficients of the expansions of the vectors $y$: There exist, for $\ell\geq 0$, polynomial vectors $F_{\ell}=(F_{1\ell},...,F_{\kk\ell})\in\kkk[\y_{m,\infty}]^\kk$ in countably many variables $\y_{m,\infty}=\{\y_{ij},\, 1\leq i\leq m,\,j\geq 1\}$, each of them only depending on finitely many of the variables, such that, writing $y_j(\t)=\sum_{i\geq 0} y_{ij}\cdot \t^i$, we have
$$\zz(\t)=\tau(y(\t))=\sum_{\ell\geq 0} F_{\ell}(y_{m,\infty})\cdot \t^\ell.$$
Textile maps induce ring homomorphisms
$$\alpha_\tau: \kkk[\z_{\kk,\infty}]\map \kkk[\y_{m,\infty}]$$
defined by $\alpha_\tau(\z_{n\ell})=F_{n\ell}(\y_{m,\infty})$, for $n=1,...,\kk$, and $\ell\geq 0$. Conversely, any $\kkk$-algebra homomorphism $\alpha :\kkk[\z_{\kk,\infty}]\map \kkk[\y_{m,\infty}]$ induces, in the case of formal power series $\AAA=\k[[\t]]$, a textile map $\tau_\alpha:\AAA^m\map \AA^\kk$.%
\footnote{ The definition of textile maps also applies to maps between rings of power series in $\t$ with coefficients in an arbitrary ring $S$ instead of the field $\kkk$. This situation will occur in the section on deformations and the proof of Theorem \ref{deformations}.}
%-------------------------------------------------
\medskip

Every \arquile map $\ff: \AAA^m\map \AA^\kk,\, y(\t)\map f(\t,y(\t))$, associated to a vector $f\in\BB^\kk$ is textile. Also, if $y(\t)=a(\t)\cdot h(\t)+\zz(\t)$ denotes the componentswise division of $y\in\AAA^m$ by a power series $h(\t)$ of order $d$, the associated {\it division maps} $y\map v=a\cdot h(\t)$ and $y\map \zz$ are textile, see the section on Weierstrass division. These two types of maps together with their compositions and inversions will be the main instances of textile maps in this article. A {\it textile isomorphism} is a bijective textile map whose inverse is again textile. See [BH] for more information on textile maps.
\medskip

A map $\tau: \UU\subset \AAA^m\map \AA^\kk$ defined on a subset $\UU$ of $\AAA^m$ is called {\it rationally textile} if each coefficient of the power series expansion of the images $\tau(y)$ of vectors $y\in\AAA^m$ is a rational function of finitely many of the coefficients of the expansions of the vectors $y$. The denominators of the rational functions are not allowed to vanish when evaluating them on the coefficients of vectors in $\UU$. So a rationally textile map is defined on whole $\UU$. If $y(\t)=a(\t)\cdot h(\t) +\zz(\t)$ denotes the componentswise Weierstrass division of $y\in\AAA^m$ by a series $h(\t)$ of order $d$, the map $y\map a$ is rationally textile, cf. Theorem \ref{weierstrass}.  \medskip

%-------------------------------------------------
%           TOPOLOGIES
%-------------------------------------------------

{\it Topologies}:
The affine space $\AAA^m$ over $\AAA$ will be equipped with three topologies. The {\it $\t$-adic topology} defined by the ideals $\langle \t\rangle^d=\t^d\cdot \AAA$ of $\AAA$, for $d\geq 0$,  as a basis of neighborhoods of $0\in\AAA^m$, the {\it \ttextile topology}, whose closed sets are the zerosets of (possibly infinitely many) polynomial equations in the coefficient vectors of the expansions of vectors $y\in\AAA^m$, and the {\it \arquile topology}, whose closed sets are the zerosets of \arquile maps. All three topologies will appear in the sequel. The \arquile topology is Noetherian since a descending chain of \arquile varieties $\YY_1\supset\YY_2\supset\ldots$ corresponds to an increasing chain of saturated ideals $I_1\subset I_2\subset \ldots$ of $\BB$ with $\YY_j=\YY(I_j)$.\medskip

A subset of $\AAA^m$ is {\it \ttextile locally closed} if it is the set-theoretic difference of two \ttextile closed sets. \Arquile subvarieties $\YY(f)$ of $\AAA^m$ as defined above are \ttextile closed and $\t$-adically closed, since \arquile maps $\ff$ are \ttextile continuous and $\t$-adically continuous. Textile maps are textile continuous by definition. They are also $\t$-adically continuous, since each coefficient of a power series vector in the image depends only on finitely many coefficients of the power series vector in the source. In particular, textile isomorphisms are $\t$-adic homeomorphisms.\medskip

\Arquile maps $\ff:\AAA^m\map\AA^\kk$ associated to vectors $f\in\BB^\kk$ map the $\t$-adic neighborhood $y+\langle \t\rangle^c\cdot\AAA^m$ of a vector $y\in\AAA^m$ into the neighborhood $\ff(y)+\langle \t\rangle^c\cdot\AA^\kk$. They are hence uniformly Lipschitz continuous with Lipschitz constant 1 with respect to the $\t$-adic metric on $\AAA$. For division maps $y\map \vv=a\cdot h(\t)$  and $y\map\zz$ sending $y\in \AAA^m$ to the summands $y=\vv+\zz$ of the componentswise division $y=a\cdot h(\t)+ \zz$ of $y$ by a series $h(\t)$ of order $d$, the neighborhood $\langle \t\rangle^c\cdot\AAA^m$ is mapped into $\langle \t\rangle^c\cdot\AAA^m$. In contrast, the map $y\map a$ sending $y$ to the quotient $a$ maps $\langle \t\rangle^c\cdot\AAA^m$ only into $\langle \t\rangle^{c-d}\cdot\AAA^m$, for all $c\geq d$.\medskip

A \ttextile closed subset $\ZZ$ of a subset $\UU$ of $\AAA^m$ is {\it cofinite} in $\UU$ if it can be defined (as a subset of $\UU$) by {\it finitely} many polynomial equations in (finitely many) coefficients of the power series expansions of vectors of $\AAA^m$. In the literature, such subsets are also known as {\it cylindrical} or {\it generically stable}. The analogous definition applies to \ttextile locally closed subsets.\medskip

%--------------------------------------------------
%                       SINGULAR LOCUS
%--------------------------------------------------

{\it Singular locus} : Let $y=y(\t)\in\AAA^m$ be a given power series vector. We may associate to $y$ the evaluation map $\varepsilon_y:\BB\map\AAA$ sending the variable $\y$ to $y$. Its kernel $\Ker(\varepsilon_y)$ is a prime ideal of $\BB$ which will be denoted by $\PP_y$,
$$\PP_y=\{f\in\BB,\, f(\t,y(\t))=0\}.$$
It is called the {\it ideal of relations} between the components of $y$. This is, in contrast to the finite dimensional situation of points in affine space $\A^m_\kkk$ over a field, never a maximal ideal. The map sending $y$ to $\PP_y$ is injective, since $y=y(\t)$ can be reconstructed from the elements $\y_i-y_i(\t)$ of $\PP_y$. 
%In this way the vectors $y$ of $\AAA^m$ correspond to certain (non-closed) points $p_y$ of the affine scheme $\A^{1+m}_\kkk=\Spec(\BB)$, namely those whose closure represents the ``graph of the map $\t\map y(\t)$'' associated to $y$.%
%
\footnote{ As $\BB$ is a ring of power series, one may want to prefer to associate to it the formal spectrum ${\rm Spf}(\BB)$. This is, however, irrelevant for our purposes since we will always work with the rings, the spectra only serving as an intuitive geometric visualization of the algebraic facts.} 
% In the formal case $\AAA=\kkk[[\t]]$, one could recover $\YY$ as the set of $\kkk[[\t]]$-points of the $\kkk[[\t]]$-formal scheme ${\rm Spf}(\BB/I_\YY)$.} 
%
We denote by $\BB_{\PP_y}$ the localization of $\BB$ at $\PP_y$.\medskip

%---------------------------------
%             DELETED AUG 2021
%---------------------------------

%Let now $\YY=\YY(I)\subset\AAA^m$ be \an \arquile variety defined by the ideal $I$ of $\BB$, and let $y\in\YY$ be a point. We may always assume that $I=I_\YY$ is saturated whenever we consider $\YY$ as a set. By definition, $I$ is contained in $\PP_y$, and one may associate to $I$ the subscheme $Y=V(I)=\Spec(\BB/I)$ of $\A^{1+m}_\kkk$. This is an affine scheme of finite type over $\kkk$ containing all points $p_y$ of $\A^{1+m}_\kkk$ defined by the ideals $\PP_y$, for $y$ varying in $\YY$. It is clear that $Y$ determines $\YY$. Nevertheless, the geometric examination of $\YY$ will be carried out directly on $\YY$ by considering it as a subset of $\AAA^m$ rather than studying the associated $Y$ as a scheme. This is due to the fact that the description of $\YY$ relies on the interpretation of the elements of $\AAA^m$ as power series, not just points, a viewpoint which is concealed when treating $Y$ merely scheme-theoretically.\medskip

%---------------------------------
%             END DELETED AUG 2021
%---------------------------------

Denote by $\ol \PP_y$ the image of $\PP_y$ in $\BB/I$. We then say that $y$ is an {\it \arqregular} point of $\YY$ if the localization $(\BB/I)_{\ol \PP_y}=\BB_{\PP_y}/I\cdot \BB_{\PP_y}$ is a regular local ring. By the above, this is equivalent to saying that $p_y$ is a regular point of the scheme $Y$. The locus of \arqregular points of $\YY$ is denoted by $\Reg(\YY)$, and we set $\Sing(\YY)=\YY\sm \Reg(\YY)$, the {\it \arqsingular locus} of $\YY$. If $y$ is an \arqregular point of $\YY$, it necessarily lies in a single irreducible component of $\YY$, cf.~Proposition \ref{primedecomposition} and the theorem below. For local investigations at \arqregular points we may therefore assume that $I$ is prime. The following version of the Jacobian criterion will be used repeatedly, cf.~\cite{dJP}, Thm.~4.3.15, \cite{Ru}, Prop.~4.3, and \cite{Za}, Thm.~7 and its corollary (recall that the ground field $\kkk$ is assumed to be perfect):

%-------------------------------------------------
%       JACOBIAN CRITERION
%-------------------------------------------------

\begin{prop} [Jacobian criterion] \label{jacobian} Let $I$ be an ideal of $\BB$ generated by elements $f_1,...,f_\kk$, and let $\PP$ be a prime ideal of $\BB$ containing $I$. Then $(\BB/I)_{\ol \PP}$ is a regular local ring if and only if the rank of the Jacobian matrix $\6_{\t\y} f$ of $f=(f_1,...,f_\kk)$ modulo $\PP$ equals the height of the ideal $I_\PP=I\cdot \BB_{\PP}$.
\end{prop}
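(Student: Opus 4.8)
The plan is to reduce this to a standard form of the Jacobian criterion for regular rings over a perfect field, applied not to $\BB$ directly but to a suitable polynomial or power series ring that surjects onto it with a known kernel. First I would recall the setting: $\BB$ is one of $\kkk[[\t,\y]]$, $\kkk\{\t,\y\}$ or $\kkk\langle\t,\y\rangle$, which in each case is a regular local ring of dimension $1+m$ (it is a regular local ring containing the perfect field $\kkk$, with regular system of parameters $\t,\y_1,\dots,\y_m$). The ideal $I$ is generated by $f_1,\dots,f_\kk$ and $\PP\supset I$ is prime. The ring $\BB_\PP$ is again regular local, and $(\BB/I)_{\ol\PP}=\BB_\PP/I_\PP$. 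What we must show is: $\BB_\PP/I_\PP$ is regular $\iff$ $\operatorname{rank}(\6_{\t\y}f \bmod \PP)=\operatorname{ht}(I_\PP)$, where the rank is computed over the field of fractions $\kappa(\PP)=\BB_\PP/\PP\BB_\PP$.

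The key steps, in order: (1) Invoke the general Jacobian criterion for a regular ambient local ring. Since $\BB$ is regular, $I_\PP$ is generated by $f_1,\dots,f_\kk$ in the regular local ring $\BB_\PP$. The standard statement (e.g.\ Matsumura Thm.\ 30.3, or the references \cite{dJP}, \cite{Ru}, \cite{Za} cited) says that if $A$ is a regular local ring essentially of finite type (or a convergent/algebraic power series localization) over a perfect field, $\mathfrak a=(f_1,\dots,f_\kk)$, and $\mathfrak q$ is a prime containing $\mathfrak a$, then $A/\mathfrak a$ localized at $\mathfrak q$ is regular iff the Jacobian matrix of the $f_i$ with respect to a regular system of parameters of $A$ has rank equal to $\operatorname{ht}(\mathfrak a A_\mathfrak q)$ when reduced modulo $\mathfrak q$. (2) Apply this with $A=\BB_\PP$, regular system of parameters essentially $\t,\y_1,\dots,\y_m$ (up to units, since $\BB$ itself is local with this regular system of parameters and localization at $\PP$ doesn't change the relevant partial-derivative computation), $\mathfrak a=I_\PP$, $\mathfrak q=\PP\BB_\PP$. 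The Jacobian matrix is then exactly $\6_{\t\y}f=(\6_\t f_i,\6_{\y_1}f_i,\dots,\6_{\y_m}f_i)$, a $\kk\times(1+m)$ matrix over $\BB$, and its rank modulo $\PP$ is what appears in the statement. (3) Note the perfectness hypothesis on $\kkk$ is precisely what guarantees $\kappa(\PP)$ is separable over $\kkk$ (or rather that the residue field extensions are separable / geometrically regular), so that the Jacobian criterion is an equivalence and not merely a one-sided implication; in positive characteristic without perfectness one would only get the ``smooth'' $\Rightarrow$ ``regular'' direction.

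The main obstacle, and the only real content beyond citing, is making precise that the three cases $\kkk[[\t,\y]]$, $\kkk\{\t,\y\}$, $\kkk\langle\t,\y\rangle$ are all covered by a Jacobian criterion with an \emph{if and only if}, since the classical references often state it for affine algebras or for complete/analytic local rings separately. For $\kkk[[\t,\y]]$ this is the analytic/formal Jacobian criterion (the complete case is in \cite{Ma}, or one uses that $\BB/I$ is excellent); for $\kkk\{\t,\y\}$ one uses that convergent power series rings over a complete valued field are excellent and Nagata-type Jacobian criteria apply (\cite{dJP} Thm.\ 4.3.15 is stated in exactly this generality); for $\kkk\langle\t,\y\rangle$, the Henselization/algebraic case, one invokes \cite{Ru} Prop.\ 4.3 or \cite{Za}, which treat Nash/algebraic power series. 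I would handle this by stating the three cases together, observing that in each the relevant ambient ring is a regular, excellent local $\kkk$-algebra with $\kkk$ perfect, citing the three references for the three settings, and remarking that the derivations $\6_\t,\6_{\y_i}$ exist and behave correctly in each (they are the continuous $\kkk$-derivations dual to the regular system of parameters), so the cited criteria apply verbatim. This is essentially a bookkeeping argument; no new estimate or construction is needed.
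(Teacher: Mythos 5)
Your proposal is correct and matches the paper's treatment: the paper gives no proof of this proposition, stating it as a known result and citing exactly the references you invoke (\cite{dJP} Thm.~4.3.15, \cite{Ru} Prop.~4.3, \cite{Za} Thm.~7), with the perfectness of $\kkk$ recalled for the same reason you give. Your write-up is simply a more explicit version of that citation-based reduction, correctly separating the three qualities of $\BB$ and identifying where perfectness enters.
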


%-------------------------------------------------
%         THM  REGULAR LOCUS
%-------------------------------------------------

As a corollary, we get

\begin{theorem} [\Arqregular and \arqsingular locus] \label{singularlocus} Let $\YY=\YY(I)$ be \an \arquile variety in $\AAA^m$ defined by a saturated ideal $I$ of $\BB$, and let $y=y(\t)\in\YY$ be a point of $\YY$ with prime ideal $\PP_y$. Denote by $r$ the height of the ideal $I\cdot\BB_{\PP_y}$.\medskip

{\rm (a)} The point $y$ is an \arqregular point of $\YY$, if and only if there exist elements $f_1,...,f_r$ in $I$ and an $(r\times r)$-minor $g$ of the relative Jacobian matrix $\6_{\y}f$ of $f=(f_1,...,f_r)$ with respect to $\y$ which does not lie in $\PP_y$, i.e., which satisfies $g(y)\neq 0$. 

{\rm (b)} For every \arqregular point $y$ of $\YY$, one can choose $f_1,...,f_r\in I$ and the minor $g\in\BB\sm\PP_y$ as in assertion (a) such that $y\in\YY(f)\sm \YY(g)$ and  
$$\YY\sm \YY(g) =\YY(f)\sm \YY(g).$$

{\rm (c)} The \arqregular locus $\Reg(\YY)$ of $\YY$ is covered by finitely many  \arquile open subsets of the form $\YY(f)\sm \YY(g)$ as in assertion (b), for varying values $r$.

{\rm (d)} The intersections $\YY_i\cap\YY_j$ of irreducible components of $\YY$ lie inside $\Sing(\YY)$.

{\rm (e)} The \arqsingular locus  $\Sing(\YY)$ is \an \arquile closed and proper subset of $\YY$. The height of its defining ideal $J$ in $\BB$ is larger than the height of $I$.  

%{\rm (e)} [probably not needed] The set $\YY_{<r}$ of points $y\in \YY$ for which the height of $I\cdot\BB_{\PP_y}$ is less than $r$ is \arquile closed in $\YY$, and its defining ideal in $\BB$ has height larger than $r$.

\end{theorem}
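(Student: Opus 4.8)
The plan is to establish the five assertions in order, with (a) providing the working criterion, (b) carrying the main technical load, and (c)--(e) following as comparatively short consequences.

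For (a) I would apply the Jacobian criterion (Proposition \ref{jacobian}) at the prime $\PP_y$, which contains $I$ since $y\in\YY(I)$: the point $y$ is \arqregular exactly when, for one (equivalently, any) generating set $f_1,\dots,f_\kk$ of $I$, the rank of the full Jacobian $\6_{\t\y}f$ modulo $\PP_y$ equals $r=\height(I\BB_{\PP_y})$. The essential step is to replace the full Jacobian by the \emph{relative} one: since $f_i\in I\subseteq\PP_y$, differentiating the identity $f_i(\t,y(\t))=0$ with respect to $\t$ gives $(\6_\t f_i)(y)=-\sum_j(\6_{\y_j}f_i)(y)\cdot y_j'(\t)$, so the $\6_\t$-column of $\6_{\t\y}f$ evaluated at $y$ is an $\AAA$-linear combination of the $\6_{\y_j}$-columns; as $\BB/\PP_y$ embeds into the domain $\AAA$, evaluation at $y$ preserves matrix rank and therefore $\mathrm{rank}(\6_{\t\y}f\bmod\PP_y)=\mathrm{rank}(\6_\y f\bmod\PP_y)$. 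It then remains to translate the rank equality into a minor statement: selecting $r$ rows of $\6_\y f$ amounts to selecting $r$ elements of $I$, and the product rule together with $f_i\in\PP_y$ shows that replacing these $r$ elements by a full generating set of $I$ never decreases the rank modulo $\PP_y$ (and that this rank never exceeds $r$). Assembling these remarks yields (a).

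For (b), I would fix an \arqregular point $y$. Then $I\BB_{\PP_y}$ is a prime of height $r$ with regular quotient inside the regular local ring $\BB_{\PP_y}$, hence is generated by $r$ elements; lifting a minimal generating system to $I$ produces $f_1,\dots,f_r\in I$ with $(f_1,\dots,f_r)\BB_{\PP_y}=I\BB_{\PP_y}$, and by (a) an $r\times r$ minor $g$ of $\6_\y(f_1,\dots,f_r)$ with $g(y)\ne0$. Since $I/(f_1,\dots,f_r)$ is a finitely generated $\BB$-module whose localization at $\PP_y$ vanishes, its support is a closed set not containing $\PP_y$, so there is $h\in\BB\sm\PP_y$ with $hI\subseteq(f_1,\dots,f_r)$, i.e.\ $I\BB_h=(f_1,\dots,f_r)\BB_h$. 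The delicate point --- and the principal obstacle in the whole theorem --- is to make this localized coincidence visible on an \arquile open set through a \emph{single genuine Jacobian minor}. I would do so by replacing each $f_l$ by $f_l':=h^2 f_l$, which again lies in $I$: every row of the relative Jacobian $\6_\y(f_1',\dots,f_r')$ then carries a common factor $h$, so the corresponding $r\times r$ minor $g'$ is divisible by $h^r$, while $g'(y)=h(y)^{2r}g(y)\ne0$ because $f_l(y)=0$. Consequently $\YY(f')=\YY(h)\cup\YY(f_1,\dots,f_r)$ (as $\AAA$ is a domain) and $\YY(h)\subseteq\YY(g')$; hence $\YY(f')\sm\YY(g')=\YY(f_1,\dots,f_r)\sm\YY(g')$, this set lies outside $\YY(h)$, and there $\YY(f_1,\dots,f_r)$ agrees with $\YY(I)$ by $I\BB_h=(f_1,\dots,f_r)\BB_h$; combined with $\YY(I)\subseteq\YY(f')$ one obtains $\YY\sm\YY(g')=\YY(f')\sm\YY(g')$ with $g'$ an $r\times r$ minor as in (a) and $y\in\YY(f')\sm\YY(g')$. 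This proves (b).

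Assertion (c) is then immediate from (b) together with the Noetherianity of the \arquile topology: the sets $\YY\sm\YY(g)$ furnished by (b) are \arquile open in $\YY$ and cover $\Reg(\YY)$, and in a Noetherian space every subset is quasi-compact, so finitely many of them suffice. For (d), if $y$ lay in two irreducible components $\YY_i=\YY(I_i)$ and $\YY_j=\YY(I_j)$, the distinct prime components $I_i,I_j$ would both be contained in $\PP_y$, so $I$ would have two distinct minimal primes below $\PP_y$ and $I\BB_{\PP_y}$ could not be prime --- contradicting that the regular local ring $\BB_{\PP_y}/I\BB_{\PP_y}$ is a domain; hence $y\in\Sing(\YY)$. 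Finally, for (e): by (c) one gets $\Sing(\YY)=\YY\cap\YY(g_1)\cap\dots\cap\YY(g_N)=\YY(J)$ with $J=I+(g_1,\dots,g_N)\supseteq I$, so $\Sing(\YY)$ is \arquile closed; since the saturated, hence radical, ideal $I$ makes $\BB/I$ reduced and excellent, its regular locus is open and dense, which together with the saturatedness of the components $I_i$ (Proposition \ref{primedecomposition}) locates an \arqregular point of $\YY$ inside every $\YY_i$, so $\Sing(\YY)$ is a proper subset of $\YY$; moreover $J\not\subseteq I_i$ for every $i$ (otherwise the whole $\YY_i$ would lie in $\Sing(\YY)$), whence every prime of $\BB$ containing $J$ strictly contains some $I_i$ and therefore has height $\ge\height I_i+1\ge\height I+1$, so $\height J>\height I$.
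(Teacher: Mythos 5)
Your proof is correct and arrives at all five assertions, but in parts (b) and (e) it takes a genuinely different (and in (b) arguably cleaner) route than the paper.

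For (b), where the paper and you both need an $h\in\BB\setminus\PP_y$ linking $I$ and $I_0=(f_1,\dots,f_r)$ outside $\YY(h)$, the paper identifies the prime component $I_1\supseteq I$ defining the irreducible branch through $y$, notes that $I_0\BB_{\PP_y}=I_1\BB_{\PP_y}$, and presents $I_1$ as a colon ideal $I_1=(I_0:h)$, from which $h^3I_1\subseteq I'$ and a chain of inclusions. You instead observe that $I/I_0$ is a finitely generated $\BB$-module whose localization at $\PP_y$ vanishes, so its annihilator is not contained in $\PP_y$; this produces $h$ directly with $hI\subseteq I_0$, i.e.\ $I\BB_h=I_0\BB_h$, without ever invoking primary decomposition or identifying an associated prime. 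Both proofs then apply the same trick $f'_l=h^2 f_l$ so that each column of $\6_\y f'$ is divisible by $h$, hence $\YY(h)\subseteq\YY(g')$, while $g'(y)=h(y)^{2r}g(y)\neq0$ because $f_l(y)=0$; the set-theoretic clean-up is identical. Your version of (b) is the more elementary argument. For (e), the paper establishes properness of $\Sing(\YY)$ in the prime case by exhibiting a non-vanishing minor via the Jacobian criterion plus saturation, then reduces the general case by decomposing $\Sing(\YY)$ into the $\Sing(\YY_i)$ and the pairwise intersections $\YY_i\cap\YY_j$ and bounding heights term by term; you invoke excellence of $\BB$ and reducedness of $\BB/I$ to get an open dense regular locus, combine it with saturation of the $I_i$ to locate \arqregular points of $\YY$ in each $\YY_i$, and bound $\height J$ by noting that every prime over $J=I+(g_1,\dots,g_N)$ strictly contains some $I_i$. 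This is valid but heavier machinery than the paper's more self-contained argument, and it would help the reader to spell out the small bridge from ``the regular locus of $\Spec(\BB/I)$ contains the generic points of the minimal primes'' to ``there is an actual point $y\in\YY(I_i)$ lying in that locus'' (which is exactly where saturation enters). Parts (a), (c), (d) match the paper's reasoning.
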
 

%-------------------------------------------------
%           REMARKS  IRREDUCIBLE COMPONENTS
%-------------------------------------------------

\begin{rmks} (a) Assertion (e) implies the finiteness of the filtration by the iterated \arqsingular loci as claimed in assertion (a) of the structure theorem \ref{structuretheorem}.\medskip

(b) If $X$ is an affine algebraic variety over $\kkk$ with arc space $X_{\infty,0}$ of arcs centered at $0$, the theorem shows that the \arqregular points of $X_{\infty,0}$ in the sense of \arquile varieties correspond to arcs in $X$ which are not entirely contained in its singular locus ${\rm Sing}\, X$.\medskip

(c) The elements $f_1,...,f_r$ of assertion (b) form a regular sequence in $\BB$, i.e., generate an ideal of height $r$. This follows from the Jacobian criterion.\medskip

(d) It follows from assertion (a) that the \arqsingular locus $\Sing(\YY)$ of an irreducible \arquile variety $\YY$ is defined inside $\YY$ by the (saturation of the) ideal generated by all $(r\times r)$-minors of the relative Jacobian matrix $\6_\y f$ with respect to $\y$ of vectors $f=(f_1,...,f_r)$ of elements of $I$.\medskip 

%(e) [?] As $\BB$ is the ring of formal, convergent or algebraic power series one may choose $f_1,...,f_r$ in (a) generating a prime ideal. Indeed, an \arqregular point lies in a single irreducible component, and these components are already {\it complete intersections}, i.e., by definition, can be defined by an ideal of $\BB$ generated by a regular sequence. 
\end{rmks}

%-------------------------------------------------
%           PROOF REGULAR LOCUS
%-------------------------------------------------

\begin{proof} Let us show (a). By the Jacobian criterion, there exist elements $f_1,...,f_r$ in $I$ such that the entire Jacobian matrix $\6_{\t\y}f$ of $f=(f_1,...,f_r)$ has rank $r$ modulo $\PP_y$.  So there is an $(r\times r)$-minor $g$ of $\6_{\t\y}f$ which satisfies $g(y)\neq 0$. Deriving the equation $f(\t,y(\t))=0$ with respect to $\t$ gives $\6_\t f(y(\t))=-\6_\y f(y(\t))\cdot\6_\t y(\t)$. This shows that $g$ can actually be chosen from the relative Jacobian matrix $\6_{\y}f$ of $f$ with respect to $\y$. This proves the ``only if'' part. The converse implication follows directly from the Jacobian criterion.\medskip

To prove (b), the strategy is as follows: select the irreducible component $\YY_1$ of $\YY$ passing through $y$ (anticipating here assertion (d)), and represent it as an irreducible component of a complete intersection defined by some elements $f_1,...,f_r$ in $\BB$. From there, new elements $f'_1,...,f'_r$ will be constructed multiplying $f_1,...,f_r$ with the square $h^2$ of an element $h$ of $\BB$ whose zero-locus $\YY(h)$ contains all components of $\YY$ but $\YY_1$. The chosen minor $g'$ of the Jacobian matrix of $f'=(f'_1,...,f'_r)$ will then be a multiple of $h$ from which we infer that $f'$ and $g'$ have the required properties.\medskip

Denote by $I_0\subset I$ the ideal of $\BB$ generated by elements $f_1,...,f_r$ of $I$ as indicated in assertion (a). The height of $I_0\cdot \BB_{\PP_y}$ is at most $r$. By the choice of $f_1,...f_r$ and the Jacobian criterion it is exactly $r$. This implies that $y$ is an \arqregular point of $\YY(I_0)$, i.e., that the ring $(\BB/I_0)_{\ol \PP_y}=\BB_{\PP_y}/I_0\cdot \BB_{\PP_y}$ is regular. In particular,  $I_0\cdot \BB_{\PP_y}$ is a prime ideal in $\BB_{\PP_y}$. It is the localization $I_1\cdot\BB_{\PP_y}$ of a prime ideal $I_1$ of $\BB$, the ideal defining the irreducible component $\YY_1$ of $\YY$ passing through $y$. Therefore $I_1$ is an associated prime of both $I$ and $I_0$. We can write $I_1=(I_0:h)$ as a colon ideal with some element $h\in\BB\sm\PP_y$. \medskip

%cf. \cite{AM} Thm.~4.5 and Prop.~7.17. \medskip

We now set $f_i'=h^2\cdot f_i$, for $i=1,...,r$, and let $g'$ denote the minor of $\6_\y f'$ defined by the same columns as those which were taken for $g$. Observe that $h$ divides $g'$. As $g(y)\neq 0$ and $h(y)\neq 0$ we also have $g'(y)\neq 0$. Clearly, $\YY(h)\subset\YY(g')$. Let $I'=h^2\cdot I_0$ denote the ideal of $\BB$ generated by $f_1',...,f_r'$. As $I'\subset I$ we have $\YY(I)\subset\YY(I')$. Conversely, $I_1=(I_0:h)$ implies that $h\cdot I_1\subset I_0$ and then $h^3\cdot I_1\subset I'$. We get
$$\YY(I')\subset\YY(h^3\cdot I_1)=\YY(h)\cup\YY(I_1)\subset \YY(h)\cup\YY(I)$$
and thus
$$\YY(I')\sm\YY(h)\subset \YY(I)\sm \YY(h),$$
from which, by $I'\subset I$, follows equality,
$$\YY(I)\sm \YY(h)=\YY(I')\sm\YY(h).$$
The inclusion $\YY(h)\subset \YY(g')$ finally implies
$$\YY(I')\sm\YY(g')=\YY(I)\sm \YY(g').$$
as was claimed in (b). Assertion (c) is obvious since the arquile topology is Noetherian. \medskip

To prove (d), let $I=I_1\cap\ldots \cap I_s$ be the irredundant prime decomposition of $I$. Order the ideals $I_i$ such that $y$ is contained in $\YY(I_i)$ for $i\leq t$ and not contained in $\YY(I_i)$ for $i>t$, where $1\leq t\leq s$. Then $I\cdot \BB_{\PP_y}= I_1\cdot \BB_{\PP_y}\cap \ldots\cap I_t\cdot \BB_{\PP_y}$. If $y$ is contained in two irreducible components of $\YY$, then $I\cdot \BB_{\PP_y}$ is the intersection of at least two prime ideals and hence itself not a prime ideal. Then $\BB_{\PP_y}/I\cdot \BB_{\PP_y}$ is not regular so that $y\in\Sing(\YY)$.\medskip

We conclude with the proof of assertion (e). The \arqregular locus $\Reg(\YY)$ of $\YY$ is open by (c). For the remaining assertions, assume first that $\YY$ is irreducible, say, that $I$ is prime. Then $(\BB/I)_I$ is a field, in particular regular. By the Jacobian criterion there exist elements $f_1,...,f_r$ in $I$, where $r$ is the height of $I\cdot \BB_I$, and a minor $g$ of $\6_{\t\y}f$ of $f=(f_1,...,f_r)$ which is not contained in $I$. But $I$ is saturated and $\YY=\YY(I)$ is non-empty, so there is a point $y\in \YY=\YY(I)$ for which $g(y)\neq 0$. This implies by the Jacobian criterion that $(\BB/I)_{\ol \PP_y}$ is regular, so that $y$ is an \arqregular point of $\YY$. 
%
%[For this reasoning we need that the height of $I\cdot\BB_{\PP_y}$ is again $r$.] 
%
Hence $\Reg(\YY)$ is non-empty and $\Sing(\YY)$ a proper subset of $\YY$. Using the argument from the proof of assertion (a), we may choose for $g$ a minor of the relative Jacobian matrix $\6_\y f$ of $f$ with respect to $\y$. Let $J$ denote the ideal of $\BB$ defining $\Sing(\YY)$. By definition, $I\subset J$. But $g\in J$ and $g\not\in I$, so that $I$ is strictly contained in $J$. As $I$ is prime, Krull's principal ideal theorem %(see e.g. \cite{AM}, Cor. 11.17) 
ensures that the height of $J$ is larger than the height of $I$. This establishes assertion (e) for prime ideals.\medskip

Let now $I$ be arbitrary. It is easy to see that $\Sing(\YY)$ is composed of the union of the \arqsingular loci $\Sing(\YY_i)$ of the irreducible components $\YY_i$ of $\YY$ together with the union of the pairwise intersections $\YY_i\cap \YY_j$ of these components, for $i\neq j$. So it is defined by the saturation  $J$ of the intersection of the ideals $(I_i+I_j)\cap J_i$ over all $i\neq j$, where $J_i$ denotes the ideal of $\BB$ defining $\Sing(\YY_i)$. We already know by the preceding paragraph that the height of $J_i$ is larger than the height of $I_i$. And as $I=I_1\cap \ldots \cap I_s$ is irredundant, also the height of $I_i+ I_j$ is larger than the heights of both $I_i$ and $I_j$, for all $i\neq j$. It follows that the height of $J$ is larger than the height of $I$. This concludes the proof of the theorem.
\end{proof}

%-------------------------------------------------
%         THEOREM  SATURATION   SW DEC 2015
%-------------------------------------------------

It is possible to characterize saturated prime ideals in terms of \arqregularity, at least for convergent power series over $\kkk = \C$, using Gabrielov's theorem, \cite{Ga, Iz}.

%-------------------------------------------------

\begin{theorem}\label{saturation} A prime ideal $I \subset \C\{\t,\y\}$ is saturated if and only if there exists a point $y \in \YY(I)$ such that $(\C\{\t,\y\}/I)_{\ol \PP_y}$ is regular.
\end{theorem}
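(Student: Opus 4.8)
The statement has two directions, and one is essentially already contained in the general theory developed so far. If $I$ is saturated, then $I = I_{\YY(I)}$, and since $I$ is prime, $\YY(I)$ is a nonempty irreducible \arquile variety (Proposition \ref{primedecomposition}). By the argument in the proof of assertion (e) of Theorem \ref{singularlocus}, the \arqregular locus of an irreducible \arquile variety defined by a saturated prime ideal is nonempty: one picks elements $f_1,\dots,f_r \in I$ realizing the rank of the Jacobian matrix modulo $I$ (Jacobian criterion, Proposition \ref{jacobian}), and saturatedness plus $\YY(I)\neq\emptyset$ produce a point $y \in \YY(I)$ at which the appropriate minor $g$ does not vanish, forcing $(\C\{\t,\y\}/I)_{\ol\PP_y}$ to be regular. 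So the ``only if'' direction requires nothing new beyond citing Theorem \ref{singularlocus}.

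The content is the converse: if there exists $y \in \YY(I)$ with $(\C\{\t,\y\}/I)_{\ol\PP_y}$ regular, then $I$ is saturated, i.e.\ $I = I_{\YY(I)}$. The inclusion $I \subset I_{\YY(I)}$ is automatic. For the reverse inclusion, suppose $h \in I_{\YY(I)}$, so $h(y'(\t)) = 0$ for every $y' \in \YY(I)$; we must show $h \in I$. The plan is to argue that $I_{\YY(I)}$ is contained in every associated prime of a suitable localization, but the clean way is via Gabrielov's theorem (as the remark before the theorem indicates). Concretely: let $\varphi : \C\{\t,\y\}/I \hookrightarrow \C\{\t\}$ be the map induced by evaluation at $y$ — wait, this is not injective in general. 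Instead one uses the family of all evaluation maps $\varepsilon_{y'}$ for $y' \in \YY(I)$. The key point is that $I_{\YY(I)} = \bigcap_{y'\in\YY(I)} \PP_{y'}$, and one wants to show this intersection equals $I$.

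Here is where regularity at one point $y$ enters through Gabrielov's theorem. Because $(\C\{\t,\y\}/I)_{\ol\PP_y}$ is regular, the quotient $\C\{\t,\y\}/I$ is, after localizing, a regular local domain of some dimension, and one can find a local analytic parametrization of $\YY(I)$ near $y$: there is an analytic map germ whose image generates the points of $\YY(I)$ close to $y$ in the sense of arcs. The statement to invoke is Gabrielov's rank theorem / Izumi's formalization (\cite{Ga, Iz}): for an analytic map germ, the kernel of the induced map on convergent power series rings (the ``analytic relations'') is computed correctly, in particular a convergent power series vanishing on the image of an analytic parametrization of a reduced germ lies in the ideal of that germ. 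Applying this to the parametrization of $\YY(I)$ at the regular point $y$, one gets that any $h$ vanishing on all of $\YY(I)$ — in particular on the arcs near $y$ obtained by composing the parametrization with arcs in the parameter space — must lie in $I\cdot\C\{\t,\y\}_{\ol\PP_y}$. Since $I$ is prime and $y \in \YY(I)$, this localized membership together with the saturation argument (or a standard primary-decomposition comparison using that $I$ has no embedded primes, being prime) forces $h \in I$. Thus $I_{\YY(I)} \subset I$, giving equality.

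The main obstacle is the converse direction, and specifically the precise extraction of a ``good'' family of arcs on $\YY(I)$ from regularity at a single point, together with the correct invocation of Gabrielov's theorem to conclude that vanishing on these arcs forces membership in the localized ideal. One must be careful that the point $y$ may be a non-constant arc and that $\PP_y$ is never maximal (as noted after the definition of the singular locus), so the geometry is genuinely that of a generic point of an irreducible component rather than a closed point; the role of Gabrielov's theorem is exactly to guarantee that the analytic image of the parametrization is ``dense enough'' in the formal/arc sense that no nonzero element of $\C\{\t,\y\}/I$ can vanish identically on it. Once this density-type statement is in place, the passage from local membership at $\PP_y$ back to global membership in the prime ideal $I$ is routine commutative algebra.
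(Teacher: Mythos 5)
Your overall strategy matches the paper's: the ``only if'' direction is indeed just assertion (e) of Theorem \ref{singularlocus}, and the converse is to be proved via a parametrization at the regular point $y$ combined with Gabrielov's theorem. However, in the converse direction the two steps you yourself flag as ``the main obstacle'' are genuinely missing, and one of them is stated in a circular form. First, the parametrization must be taken in a very specific shape: by P\l oski's theorem (or Theorem \ref{factorization}), regularity of $(\BB/I)_{\ol\PP_y}$ yields a power series vector $\wt y(\t,\s)$ in $m-r$ auxiliary variables $\s$, with $r=\height(I)$, such that $\wt y(\t,s(\t))\in\YY(I)$ for \emph{every} $s(\t)\in\AAA^{m-r}$ and such that $\6_\s\wt y$ has generic rank $m-r$. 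This rank condition is the linchpin of the whole argument and does not appear in your sketch; without it the dimension count below cannot be carried out.

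Second, the version of Gabrielov's theorem you invoke --- ``a convergent power series vanishing on the image of an analytic parametrization of a reduced germ lies in the ideal of that germ'' --- is essentially the conclusion to be established (namely that the ideal of analytic relations of the parametrization is no larger than $I$), so used in that form the argument begs the question. What is actually needed is the rank inequality: for the single evaluation map $\varepsilon:\BB\map\C\{\t,\s\}$, $f\map f(\t,\wt y(\t,\s))$, the generic rank of the Jacobian of $(\t,\s)\map(\t,\wt y(\t,\s))$, which equals $1+m-r$, is bounded above by $\dim_\C(\BB/\ker\varepsilon)$, whence $\height(\ker\varepsilon)\leq r$. Since every $f\in I_{\YY(I)}$ vanishes on all arcs $\wt y(\t,s(\t))$ and therefore satisfies $f(\t,\wt y(\t,\s))=0$ identically, one gets $I\subset I_{\YY(I)}\subset\ker\varepsilon$ with both $I$ and $\ker\varepsilon$ prime and $\height(\ker\varepsilon)\leq\height(I)$, forcing $I=\ker\varepsilon=I_{\YY(I)}$. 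Note that this works with one evaluation map into $\C\{\t,\s\}$; your attempted route through the family of all evaluations $\varepsilon_{y'}$ and a localized membership $h\in I\cdot\BB_{\PP_y}$ would also close (contraction of a prime is routine), but only after the rank-controlled parametrization and the rank-inequality form of Gabrielov supply the ``density'' statement you currently assume.
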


%-------------------------------------------------

\begin{proof} Assume at first that $I$ is saturated, $I=I_{\YY(I)}$. The \arqregular locus of an \arquile variety is non-empty by assertion (e) of Theorem \ref{singularlocus}, so there exists a point $y\in\YY(I)$ for which $(\BB/I)_{\ol \PP_y}$ is regular. This shows the first implication. Conversely, let $I$ be an arbitrary ideal and assume that $(\BB/I)_{\ol \PP_y}$ is regular for some $y \in \YY(I)$. Let $r$ be the height $\height(I)$ of $I$. By \cite{Pl} or Theorem \ref{factorization} below, there exists a vector of power series $\wt y(\t,\s) \in \C[[\t,\s]]$, where $\s = (\s_{1},\dots, \s_{m-r})$ is a set of new variables, such that $\wt y(\t,s(\t)) \in \YY(I)$ for every $s(\t) \in \AAA^{m-r}$ and with the property that the rank of the relative Jacobian matrix $\6_{\s}(\wt y(\t,\s))$ equals $m-r$. \medskip

Let $\varepsilon: \BB \map \C\{\t,\s\}$, $f(\t,\y) \map f(\t,\wt y(\t,\s))$ be the evaluation map. By Gabrielov's theorem, the rank of $\6_{\t,\s}(\t,\wt y(\t,\s))$ is at most $\dim_\C(\BB/\ker \varepsilon)$. It equals $1 + m-r$. We obtain the estimate $\height(\ker \varepsilon) \leq r$. Now let $f \in I_{\YY(I)}$. Then $f(\t,\wt y(\t,s(\t))) = 0$ for all $\s(\t)\in \AAA^{m-r}$. This shows $f(\t,\wt y(\t,\s)) = 0$, and hence $I\subset I_{\YY(I)} \subset \ker \varepsilon$. Both $\ker \varepsilon$ and $I$ are prime, and as $\height(\ker \varepsilon) \leq \height(I)$, both ideals coincide; in particular $I = I_ {\YY(I)}$.
\end{proof}

%-------------------------------------------------
%          COROLLARY 1
%-------------------------------------------------

\begin{corollary}\label{radicalone}
Let $I \subset \C\{\t,\y\}$ be an ideal generated by convergent power series which do not depend on $\t$.

{\rm (a)} If $I$ is a radical ideal, then $I$ is saturated.

{\rm (b)} If $I$ is prime, then $\YY(I)$ is irreducible.
\end{corollary}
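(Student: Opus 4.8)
The plan is to reduce both parts to the prime case and then apply Theorem~\ref{saturation}. Write $f_1,\dots,f_n\in\C\{\y\}$ for the given $\t$-free generators of $I$ and put $I_0=(f_1,\dots,f_n)\subset\C\{\y\}$, so $I=I_0\,\C\{\t,\y\}$. First I would record the bookkeeping: $\C\{\y\}\hookrightarrow\C\{\t,\y\}$ is faithfully flat, and for any prime $\mathfrak q\subset\C\{\y\}$ the extended ideal $\mathfrak q\,\C\{\t,\y\}$ is again prime, since $\C\{\t,\y\}/\mathfrak q\,\C\{\t,\y\}\cong(\C\{\y\}/\mathfrak q)\{\t\}$ is the ring of convergent power series in $\t$ over the analytic domain $\C\{\y\}/\mathfrak q$, hence itself a domain; moreover adjoining $\t$ preserves heights. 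It follows that $I\cap\C\{\y\}=I_0$, that $I$ is radical iff $I_0$ is, and that, when $I$ is radical with minimal-prime decomposition $I_0=\mathfrak q_1\cap\dots\cap\mathfrak q_s$ in $\C\{\y\}$, the decomposition $I=\bigcap_i\mathfrak q_i\,\C\{\t,\y\}$ is the irredundant prime decomposition of $I$. Using $\YY(\bigcap_iJ_i)=\bigcup_i\YY(J_i)$ and $I_{A\cup B}=I_A\cap I_B$ one gets $I_{\YY(I)}=\bigcap_i I_{\YY(\mathfrak q_i\C\{\t,\y\})}$, so $I$ is saturated as soon as each prime $\mathfrak p_i:=\mathfrak q_i\,\C\{\t,\y\}$ is. Thus (a) is reduced to: \emph{$\mathfrak q\,\C\{\t,\y\}$ is saturated for every prime $\mathfrak q\subset\C\{\y\}$.}

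By Theorem~\ref{saturation} it suffices to produce a single point $y\in\YY(\mathfrak p)$, $\mathfrak p:=\mathfrak q\,\C\{\t,\y\}$, at which the localization $(\C\{\t,\y\}/\mathfrak p)_{\ol\PP_y}$ is regular. If $\mathfrak q=\mathfrak m_{\C\{\y\}}$, I would take $y=0$: then $\mathfrak p=\langle\y_1,\dots,\y_m\rangle$, $\YY(\mathfrak p)=\{0\}$, $\PP_0=\mathfrak p$, and $(\C\{\t,\y\}/\mathfrak p)_{\ol\PP_0}$ is the fraction field of $\C\{\t\}$, a regular local ring. Otherwise $X:=V(\mathfrak q)\subset(\C^m,0)$ is an irreducible analytic germ of dimension $d\geq1$. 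As in the proof of Theorem~\ref{singularlocus} (with $\C$ perfect and $\C\{\y\}/\mathfrak q$ reduced), the Jacobian criterion supplies $f'_1,\dots,f'_r\in\mathfrak q$, $r=\height\,\mathfrak q$, with $\mathfrak q$ minimal over $(f'_1,\dots,f'_r)$, together with an $(r\times r)$-minor $h$ of $\6_\y(f'_1,\dots,f'_r)$ with $h\notin\mathfrak q$, i.e., $h$ not vanishing identically on $X$. Then $X\cap V(h)\subsetneq X$ is a proper analytic subgerm, so the curve selection lemma produces a convergent arc $\gamma(\t)\in\C\{\t\}^m$ with $\gamma(0)=0$, $\gamma\subset X$, and $\gamma(\t)\notin V(h)$ for $0<|\t|$ small; thus $\gamma\in\YY(\mathfrak p)$ and $h(\gamma(\t))\neq0$ in $\C\{\t\}$, so $h\notin\PP_\gamma$. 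Consequently the Jacobian $\6_{\t\y}(f'_1,\dots,f'_r)$ has rank $r$ modulo $\PP_\gamma$ (at least $r$ because $h\notin\PP_\gamma$, at most $r$ because there are only $r$ components), which forces this rank to equal the height of $(f'_1,\dots,f'_r)\C\{\t,\y\}_{\PP_\gamma}$; Proposition~\ref{jacobian} then gives that $(\C\{\t,\y\}/(f'_1,\dots,f'_r))_{\ol\PP_\gamma}$ is regular, hence a domain, so $(f'_1,\dots,f'_r)\C\{\t,\y\}_{\PP_\gamma}$ is a prime of height $r$ contained in the height-$r$ prime $\mathfrak p\,\C\{\t,\y\}_{\PP_\gamma}$. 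The two coincide, so $(\C\{\t,\y\}/\mathfrak p)_{\ol\PP_\gamma}$ is regular and Theorem~\ref{saturation} yields that $\mathfrak p$ is saturated. This proves (a).

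Part (b) is then immediate: if $I$ is prime it is saturated by (a), its irredundant prime decomposition consists of $I$ alone, and Proposition~\ref{primedecomposition} shows that $\YY(I)$ is nonempty and has $\YY(I)$ itself as its only irreducible component; since the \arquile topology is Noetherian, $\YY(I)$ is irreducible.

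The step I expect to be the real obstacle --- the rest being bookkeeping with flat extensions plus the Jacobian criterion already used for Theorem~\ref{singularlocus} --- is the production of the arc $\gamma$: an analytic curve through the origin lying on $X=V(\mathfrak q)$ but not inside $X\cap V(h)$. For complex-analytic germs this is precisely the curve selection lemma. If one prefers a self-contained argument, one can Noether-normalize $\C\{\y\}/\mathfrak q$ over some $\C\{z_1,\dots,z_d\}$ and take, in $X$, the preimage of a sufficiently general line through $0$ in $\C^d$: the image of $X\cap V(h)$ under this finite projection is a proper analytic subset of $\C^d$, hence missed by a general line, and normalizing one branch of the preimage gives the required convergent parametrization. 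The auxiliary facts used freely above --- faithful flatness of $\C\{\y\}\hookrightarrow\C\{\t,\y\}$, the identification $\C\{\t,\y\}/\mathfrak q\,\C\{\t,\y\}\cong(\C\{\y\}/\mathfrak q)\{\t\}$, and invariance of heights under adjoining $\t$ --- are standard for analytic algebras.
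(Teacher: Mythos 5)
Your proof follows essentially the same route as the paper's: contract to $\C\{\y\}$, decompose into primes, use the curve selection lemma to produce for each component an arc not contained in its singular locus, apply Theorem~\ref{saturation} to conclude each prime component extends to a saturated ideal, and finish by noting that an intersection of saturated ideals is saturated (with part (b) then following from Proposition~\ref{primedecomposition}). You simply make explicit several steps the paper compresses (faithful flatness, primality of extended ideals, the Jacobian-criterion verification of regularity at the chosen arc), and all of these are correct.
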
\goodbreak

%-------------------------------------------------

\begin{proof}  Set $J=I\cap\C\{\y\}$ and let $J = J_{1}\, \cap \dots \cap\, J_s$ be an irreducible prime decomposition. Denote by $X_i$ the irreducible analytic space germ defined by $J_i$. Because of the curve selection lemma  \cite{Mi}, we can choose for every component $X_i$ an arc $y^i(\t) \in \AAA^m$ which does not lie completely in the singular locus of $X_i$. Hence $(\BB/J_i\cdot \BB)_{\ol P_{y^i}}$ is regular, which, by the previous theorem, implies that $J_i\cdot  \BB$ is satu\-rated. It is easy to see that $I = J\cdot \BB = (J_1\cdot \BB)\, \cap \dots \cap\, (J_s\cdot  \BB)$. The intersection of finitely many saturated ideals is again saturated, hence $I$ is saturated.\end{proof}

%-------------------------------------------------
%          COROLLARY 1
%-------------------------------------------------

\begin{corollary}\label{radicaltwo}
Let $J \subset \C[\y]$ be a polynomial ideal, and set $I=J\cdot\C\{\t,\y\}$.

{\rm (a)}  If $J$ is a radical ideal, then $I$ is saturated.

{\rm (b)} Let $J$ be radical. Then $\YY(I)$ is irreducible if and only if $J\cdot \C\{\y\}$ is prime.

\end{corollary}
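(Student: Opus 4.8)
The plan is to reduce both assertions to Corollary \ref{radicalone} by the standard trick of relating a polynomial ideal $J\subset\C[\y]$ to the convergent ideal $J\cdot\C\{\y\}$ it generates. First I would set $J'=J\cdot\C\{\y\}$ and observe that $I=J\cdot\C\{\t,\y\} = J'\cdot\C\{\t,\y\}$, so that $I$ is generated by convergent series not depending on $\t$ in the sense of Corollary \ref{radicalone}. Thus to apply that corollary I only need to know that $J'$ is a radical (respectively prime) ideal of $\C\{\y\}$ whenever $J$ is. For (a) this is exactly the statement that the analytic localization $\C\{\y\}$ of $\C[\y]$ at the origin is a good enough extension to preserve radicality: since $\C[\y]\to\C\{\y\}$ factors through $\C[\y]_{\mathfrak m}\to\C[\y]^{\wedge}_{\mathfrak m}=\C[[\y]]$ and both the Henselization/analytic localization and the completion of an excellent (here, finite type over a field) local ring are known to preserve reducedness, $J\cdot\C\{\y\}$ is again radical. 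One must be slightly careful: $J$ radical in $\C[\y]$ does not make $J\cdot\C[\y]_{\mathfrak m}$ have all associated primes containing $\mathfrak m$ — but radicality is preserved under any flat base change with reduced (here regular) fibers, and $\C[\y]\to\C\{\y\}$ is such; alternatively, $\C\{\y\}/J' = (\C[\y]/J)\otimes_{\C[\y]}\C\{\y\}$ is the analytic local ring of the reduced affine scheme $V(J)$ at the origin, which is reduced because $V(J)$ is a reduced (excellent) scheme. This gives (a).

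For (b), assume $J$ is radical. Suppose first that $J\cdot\C\{\y\}$ is prime; then by Corollary \ref{radicalone}(b), $\YY(J\cdot\C\{\y\})=\YY(I)$ is irreducible. Conversely, suppose $J\cdot\C\{\y\}$ is not prime. Since by part (a) $I=J\cdot\C\{\t,\y\}$ is saturated, I may invoke Proposition \ref{primedecomposition}: if $I=I_1\cap\dots\cap I_s$ is the irredundant prime decomposition, then each $\YY(I_i)$ is a nonempty irreducible component of $\YY(I)$, and $\YY(I)$ is irreducible precisely when $s=1$, i.e. when $I$ itself is prime. So it suffices to show that $I=J\cdot\C\{\t,\y\}$ is prime if and only if $J\cdot\C\{\y\}$ is prime. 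This follows from the fact that $\C\{\t,\y\}$ is faithfully flat over $\C\{\y\}$ with geometrically integral fibre (the generic fibre is the field of fractions' analytic completion in $\t$, a domain), so $J\cdot\C\{\t,\y\} = (J\cdot\C\{\y\})\cdot\C\{\t,\y\}$ is prime iff $J\cdot\C\{\y\}$ is — concretely, $\C\{\t,\y\}/I \cong (\C\{\y\}/J\cdot\C\{\y\})\{\t\}$, and $R\{\t\}$ is a domain whenever $R$ is a reduced analytic local ring that is a domain. Hence $\YY(I)$ reducible forces $J\cdot\C\{\y\}$ non-prime, completing (b).

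I expect the main obstacle to be the careful bookkeeping around the extension $\C[\y]\to\C\{\y\}\to\C\{\t,\y\}$: one must justify that radicality descends to and lifts along these maps, and that primality of $J\cdot\C\{\y\}$ is equivalent to primality of $J\cdot\C\{\t,\y\}$. The cleanest route is to phrase everything via the flatness of these extensions together with the fact that they have reduced (regular) fibres, invoking the standard commutative-algebra fact that $A\to B$ flat with $A$ reduced and all fibres reduced implies $B$ reduced, and that $B/\mathfrak p B$ is prime whenever $\mathfrak p\subset A$ is prime and $B/\mathfrak p B$ has an integral fibre over $\kappa(\mathfrak p)$. Everything else is then a direct citation of Corollary \ref{radicalone} and Proposition \ref{primedecomposition}.
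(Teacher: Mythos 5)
Your proposal is correct and follows essentially the same route as the paper: reduce to Corollary \ref{radicalone} by showing that the extensions $\C[\y]\to\C\{\y\}\to\C\{\t,\y\}$ preserve radicality and primality, and use saturatedness together with Proposition \ref{primedecomposition} for the converse in (b). The only difference is cosmetic: where you argue via flatness with reduced (resp.\ integral) fibres, the paper simply cites Chevalley's theorem for the radicality of $J\cdot\C\{\t,\y\}$.
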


%-------------------------------------------------

\begin{proof}
For the first statement, it suffices to check, by the preceding corollary, that $J\cdot \BB$ is a radical ideal. But this is ensured by Chevalley's theorem, see \cite{Ru}, Prop.~1.3. The second statement follows from the first together with the previous theorem: If $J$ is prime, then $I$ is prime and the preceding corollary tells us that $\YY(I)$ is irreducible. Conversely, assume that $\YY(I)$ is irreducible. As $J$ is radical, so is $I$, and hence $I$ is saturated by (a). Then, as $\YY(I)$ is irreducible,  $I$ and hence $J\cdot \C\{\y\}$ must be prime. \end{proof}
\goodbreak

%-------------------------------------------------
%           WEIERSTRASS DIVISION  CLASSIC + PARAMETERS
%-------------------------------------------------

\section{Weierstrass division}

%-------------------------------------------------

We will give two versions of this theorem, one with coefficients in a field and one with coefficients in a complete local ring, both in the case of series in just one variable $\t$. The main point is to describe how the quotient and the remainder of the division depend on the divisor and the series to be divided. In the first case, this requires to work on the strata where the divisor has a fixed order as a power series in $\t$, in the second case one considers deformations of a given divisor obtained by varying its coefficients by elements in the complete local ring. All results in this section are classical and hold equally for formal, convergent and algebraic power series.%
\footnote{ The division of algebraic power series is due to Lafon \cite{La}.}
\medskip

%---------------------------------

(1) {\it Division on strata} : The results apply to any of the rings $\k[[\t]]$, $\k\{\t\}$, $\k\{\t\}_s$, or $\k\langle\t\rangle$ of power series without constant term.%
\footnote{ In the case $\AAA=\k\{\t\}_s$, one has to restrict to sufficiently small radii $s>0$, and vectors $y$ of sufficiently small $s$-norm, see \cite{HM} for details.}
Denote by $\k[\t]_{\leq d}$ the vector space of polynomials in $\t$ of degree $\leq d$  without constant term. \medskip

%-------------------------------------------------
%           WEIERSTRASS DIVISION THEOREM (CLASSIC VERSION)
%-------------------------------------------------

\begin{theorem} [Weierstrass division on strata of constant order] \label{weierstrass} Assume given a power series $g\in\BB$ in variables $\t$ and $\y=(\y_1,...,\y_m)$.\medskip

%-------------------------------------------------

{\rm (a)} Let $y=y(\t)\in\AAA^m$ be a power series vector without constant term such that $g(y)=g(\t,y(\t))$ is non-zero of order $d$. For every $\ww\in\AAA$ there exist a unique power series $a\in\AAA$ and a unique polynomial $\zz\in \k[\t]_{\leq d}$ of degree $\leq d$ such that
$$\ww=g(y)\cdot a+\zz.$$
%
%-------------------------------------------------

{\rm (b)} Denote by $\SS_d \subset\AAA^m$ the \ttextile locally closed stratum of vectors $y\in\AAA^m$ for which $g(y)$ is non-zero of order $d$. The maps
$$\psi:\SS_d\times \AAA\map \k[\t]_{\leq d}\times \langle\t\rangle^d\cdot\AAA $$
$$(y,\ww)\map (\zz,\vv)$$
and
$$\chi:\SS_d\times \AAA\map \k[\t]_{\leq d}\times \AAA$$
$$(y,\ww)\map (\zz, a)$$
given by the division $\ww=\vv+\zz=g(y)\cdot a+\zz$ are textile, respectively rationally textile.

%-------------------------------------------------

%{\rm (c)} The analogous statements hold in the context of convergent and algebraic power series, for power series convergent on a closed polycylinder, respectively of radius of convergence $\geq r$, and for Gevrey series of order $s$. 
\end{theorem}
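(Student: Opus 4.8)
The plan is to reduce everything to the classical Weierstrass division theorem in one variable applied to the power series $g(y) \in \AAA$, and then track carefully how the quotient and remainder depend polynomially (resp. rationally) on the coefficients of $y$ and $\ww$. For part (a), fix $y \in \SS_d$, so $h := g(y)$ is a nonzero element of $\AAA$ of order exactly $d$; write $h = c \cdot \t^d + (\text{higher order terms})$ with $c \in \kkk^\times$. Then $h = \t^d \cdot u$ for a unit $u \in \AAA \cup \kkk$ (more precisely $u$ is a power series with $u(0) = c \neq 0$, a unit in $\kkk[[\t]]$, $\kkk\{\t\}$ or $\kkk\langle \t\rangle$). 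Given $\ww \in \AAA$, the classical Weierstrass division of $\ww$ by $\t^d$ yields a unique decomposition $\ww = \t^d \cdot b + \zz$ with $\zz \in \kkk[\t]_{\leq d}$ (here $\zz$ has degree $\leq d$, in fact $\leq d-1$, but it is harmless to allow degree $\leq d$ and we use the convention without constant term coming from $\AAA$) and $b \in \AAA$. Setting $a := u^{-1} \cdot b \in \AAA$ gives $\ww = h \cdot a + \zz = g(y) \cdot a + \zz$. Uniqueness follows from the uniqueness of division by $\t^d$ together with the fact that $u$ is a unit: if $g(y) \cdot a + \zz = g(y) \cdot a' + \zz'$, then $\t^d(u a - u a') = \zz' - \zz$, and comparing $\t$-adic orders forces $\zz = \zz'$ and then $a = a'$.

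For part (b), I would make the dependence explicit. The map $y \mapsto g(y)$ is arquile, hence textile: the coefficient of $\t^\ell$ in $g(y)$ is a polynomial $G_\ell$ in finitely many of the coefficients $y_{ij}$. On the stratum $\SS_d$ we have $G_0 = \dots = G_{d-1} = 0$ and $G_d \neq 0$ in the coefficients of $y$; the stratum is cut out by exactly these finitely many conditions (polynomial equalities and one inequality), so it is textile cofinite and locally closed. Now the decomposition $\ww = \t^d b + \zz$ from division by $\t^d$ is entirely tautological on coefficients: $\zz$ collects the coefficients of $\t^0, \dots, \t^{d-1}$ of $\ww$ and $b$ collects those of $\t^d, \t^{d+1}, \dots$, shifted down — this is manifestly $\kkk$-linear and textile in $\ww$, and independent of $y$. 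The remaining point is the multiplication by $u^{-1}$, where $u = \t^{-d} g(y)$. Its coefficients are polynomials in the $y_{ij}$ divided by $G_d$ (the leading nonzero coefficient), since inverting a power series $u = u_0 + u_1\t + \dots$ with $u_0 = G_d$ produces coefficients that are polynomials in $u_1, u_2, \dots$ divided by powers of $u_0$. Hence $a = u^{-1} b$ has coefficients that are polynomials in the $y_{ij}$ and the coefficients of $\ww$, divided by a power of $G_d$ — this is exactly the definition of a \emph{rationally} textile map, and the denominator $G_d$ does not vanish on $\SS_d$. The map $\chi: (y,\ww) \mapsto (\zz, a)$ is therefore rationally textile. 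For $\psi$, note $\vv = \ww - \zz = g(y) \cdot a$, and since $g(y) = \t^d u$ while $a = u^{-1} b$, we get $\vv = \t^d b$, which (as already observed) depends on $\ww$ alone, linearly, by a plain coefficient shift — genuinely textile, no denominators — and its components lie in $\langle \t \rangle^d \cdot \AAA$. So $\psi: (y,\ww) \mapsto (\zz, \vv)$ is textile. Finally one checks the convergent and algebraic cases: division by $\t^d$ and inversion of a unit both preserve convergence (with controlled $s$-norm, as in \cite{HM}) and algebraicity (inversion of a Nash unit is Nash, cf. the Lafon reference), so the decomposition stays within the chosen quality $\AAA$.

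The main obstacle — or rather the only point requiring genuine care — is the bookkeeping in part (b): verifying that the passage from $g(y)$ to $u^{-1}$ introduces only the denominator $G_d$ (a single coefficient polynomial, nonvanishing on the stratum) and not some more complicated rational expression, and that each output coefficient depends on only \emph{finitely many} input coefficients (which holds because the $\ell$-th coefficient of $a = u^{-1}b$ depends on $b_0, \dots, b_\ell$ and $u_0^{-1}, \dots, u_\ell$, hence on finitely many $y_{ij}$ and coefficients of $\ww$). Everything else — existence, uniqueness, the $\t$-adic order estimates, the identification $\vv = \t^d b$ — is a direct consequence of one-variable Weierstrass division by $\t^d$ combined with the invertibility of the unit $u$. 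I would also remark that the extra factor of "degree $\leq d$" rather than "$\leq d-1$" in the statement is just a convenient slack; $\zz$ in fact has degree $< d$.
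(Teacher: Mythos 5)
The paper does not supply a proof of Theorem~\ref{weierstrass}; it is cited as classical, and the surrounding text only explains the non-standard convention that the quotient $a$ is required to lie in $\AAA$ (i.e.\ to have zero constant term), which forces the remainder $\zz$ to have degree $\leq d$ rather than $\leq d-1$. Your overall strategy --- factor $g(y) = \t^d \cdot u$ with $u$ a unit, divide $\ww$ by $\t^d$, and absorb $u^{-1}$ into the quotient, then track the coefficient dependence --- is exactly the natural argument and establishes the result.

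The one place where you stumble is the bookkeeping around this degree convention, and it introduces a real internal inconsistency. You write $\ww = \t^d \cdot b + \zz$ with $b \in \AAA$ (so $b$ has no constant term), which forces $\t^d b$ to have order $\geq d+1$ and hence $\zz$ to have degree $\leq d$, with the $\t^d$-coefficient of $\zz$ equal to $\ww_d$. Yet in your explicit description you say ``$\zz$ collects the coefficients of $\t^0, \ldots, \t^{d-1}$ of $\ww$ and $b$ collects those of $\t^d, \t^{d+1}, \ldots$, shifted down,'' which would put $b_0 = \ww_d \neq 0$ in general and hence $b \notin \AAA$; and your closing remark that ``$\zz$ in fact has degree $< d$'' directly contradicts the paper's remark immediately after the theorem. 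Under the paper's convention $\zz$ really is the degree-$\leq d$ truncation of $\ww$ (including the $\t^d$ term), $\vv = \ww - \zz$ has order $\geq d+1$ (this is precisely why $\vv$ lands in $\langle \t \rangle^d \cdot \AAA$), and $a = \vv / g(y) \in \AAA$. With your convention $\vv = \t^d b$ has a potential nonzero coefficient at $\t^d$ and would not lie in the stated codomain $\langle \t \rangle^d \cdot \AAA$. This is a one-line fix, but as written the proof computes $\psi$ with the wrong target. The rest --- the observation that $\zz$ and $\vv$ depend only on $\ww$ (not on $y$) and are linear, hence textile, and that the coefficients of $u^{-1}$ and therefore of $a$ are polynomials in the $y_{ij}$ and $\ww_j$ divided by powers of the single leading coefficient $G_d(y)$, non-vanishing on $\SS_d$, hence rationally textile --- is correct and well argued.
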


%-------------------------------------------------
%             PROOF
%-------------------------------------------------

%\begin{proof} {}[We may insert the proof, decomposing $\ww$ into the sum $\zz$ of monomials of degree $\leq d$ and the sum $\vv$ of monomials of degree $> d$, factoring $\t^{d+1}$ from $\vv$ and replacing then the factor $\t^{d+1}$ by $g(y)\cdot \t\cdot u(y)^{-1}$.] See also \cite{Bou}, chap 7.]\end{proof}

%-------------------------------------------------
%      REMARKS
%-------------------------------------------------

Note that letting $g$ in assertion (a) be independent of $\y$, say, taking $g\in {\mathcal A}$ a series in $\t$ of order $d$, one obtains the usual version of the division. The requirement that the quotient $a\in\AAA$ has no constant term is only imposed for technical reasons that will simplify the exposition later on. This convention implies that the remainder of the division is a polynomial of degree $\leq d$ instead of degree $\leq d-1$.\medskip

Note also that in assertion (b) the map $\psi$ does depend on $y$ since the divisor $g(y)$ in $w=v+z= g(y)\cdot a+z$ depends on $y$. The map $\chi$ is only rationally textile as can be seen by taking $g(\y)=\y$, $y=c\cdot t^d$, $c\in\kkk^*$, and $w=t^d$. Then $a={1\over c}$ is rational in the coefficient of $y$.\medskip

%-------------------------------------------------

We will apply the Weierstrass division in the case where the series $w$ which is divided by $g(y)$ varies over the components $y_j$ of the vector $y$ itself. In this case, it will be possible to determine explicitly the sets of quotients and remainders of the division as $y$ varies in $\SS_d$, see Theorem \ref{division}. This description will become crucial for the proof of the factorization theorem \ref{factorization}. We first need an auxiliary result. Recall that $\BB$ denotes one of the rings $\kkk[[\t,\y]]$, $\kkk\{\t,\y\}$ or $\kkk\langle\t,\y\rangle$. For later applications we write $\z$ instead of $\y$.

%-------------------------------------------------
%         LEMMA ON PRINCIPAL IDEALS
%-------------------------------------------------

\begin{lemma}\label{principalideal} Let $g\in\BB$ be a formal power series in variables $\t$ and $\z=(\z_1,...,\z_m)$. For every choice of vectors $\zz$ in $\AAA^m$ and $\vv=g(\zz)\cdot a\in \langle g(\zz)\rangle\cdot\AAA^m$, the ideals of $\AAA$ generated by $g(\zz)$ and $g(\zz+\vv)$ coincide,
$$\langle g(\zz)\rangle =\langle g(\zz+\vv)\rangle.$$
\end{lemma}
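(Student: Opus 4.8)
The plan is to show the two inclusions $\langle g(\zz+\vv)\rangle \subseteq \langle g(\zz)\rangle$ and $\langle g(\zz)\rangle \subseteq \langle g(\zz+\vv)\rangle$ separately. First I would write $\vv = g(\zz)\cdot a$ with $a\in\AAA^m$ and expand $g(\zz+\vv)$ by a formal Taylor expansion of $g$ in its $\z$-variables around $\zz$: there is a power series identity
$$g(\zz+\vv) = g(\zz) + \6_\z g(\zz)\cdot\vv + (\text{higher order terms in }\vv).$$
Every term on the right except $g(\zz)$ itself carries at least one factor of a component of $\vv$, hence at least one factor of $g(\zz)$ (since each $\vv_i = g(\zz)\cdot a_i$). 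Therefore $g(\zz+\vv) \in \langle g(\zz)\rangle$, which gives the inclusion $\langle g(\zz+\vv)\rangle \subseteq \langle g(\zz)\rangle$. Concretely one writes $g(\zz+\vv) = g(\zz)\cdot(1 + h)$ for a suitable $h\in\AAA$ (indeed $h\in\langle\t\rangle\cdot\AAA$ if $a$ has no constant term, but this refinement is not needed here).

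For the reverse inclusion the key observation is that the unit $1+h$ above is invertible in $\AAA$ when $h$ lies in the maximal ideal, but more robustly one uses symmetry: set $\zz' = \zz+\vv$ and $\vv' = -\vv$. Then $\zz = \zz' + \vv'$, and to apply the already-proved inclusion in the other direction I must check that $\vv' \in \langle g(\zz')\rangle\cdot\AAA^m$, i.e.\ that each $-\vv_i$ is a multiple of $g(\zz+\vv)$. Write $-\vv_i = -g(\zz)a_i$; using $g(\zz+\vv) = g(\zz)(1+h)$ one gets $g(\zz) = g(\zz+\vv)\cdot(1+h)^{-1}$ provided $1+h$ is a unit in $\AAA$. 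Since $g(y)$ is assumed to have a well-defined order $d$ as a series in $\t$ (this is the hypothesis of the enclosing setup), $g(\zz)$ is a nonzero element and $g(\zz+\vv) = g(\zz)(1+h)$ has the same order only if $1+h$ is a unit; but in fact $h = \6_\z g(\zz)\cdot a + \cdots$ and if $a$ is required (as in the division theorem) to have no constant term then $h\in\langle\t\rangle$ automatically, so $1+h$ is a unit. Hence $g(\zz)\in\langle g(\zz+\vv)\rangle$ and consequently each $\vv_i'= -g(\zz)a_i \in \langle g(\zz+\vv)\rangle = \langle g(\zz')\rangle$, and $g(\zz') = g(\zz+\vv)$ trivially lies in $\langle g(\zz+\vv)\rangle$; combining, $\langle g(\zz)\rangle = \langle g(\zz+\zz')\,\cdots\rangle$ — more cleanly, $\langle g(\zz)\rangle \subseteq \langle g(\zz+\vv)\rangle$, giving equality.

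The main obstacle is purely bookkeeping: making the ``higher order terms in $\vv$'' rigorous uniformly for formal, convergent and algebraic series, i.e.\ justifying that $g(\zz+\vv)$ is a legitimate substitution and that the factorization $g(\zz+\vv)=g(\zz)(1+h)$ with $h\in\langle\t\rangle\cdot\AAA$ holds in all four qualities of $\BB$. This is handled by the same substitution formalism already set up for \arquile maps (each $\vv_i$ has positive $\t$-order whenever $g(\zz)$ does, so $\zz+\vv\in\AAA^m$ and substitution into $g\in\BB$ is defined), together with the elementary fact that $1+h$ with $h$ in the maximal ideal $\langle\t\rangle$ is a unit in each of $\k[[\t]]$, $\k\{\t\}$, $\k\{\t\}_s$, $\k\langle\t\rangle$. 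Once the factorization is in hand, the equality of principal ideals is immediate, so no further estimates are required.
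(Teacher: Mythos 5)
Your proposal is correct and follows essentially the same route as the paper: Taylor-expand $g(\zz+g(\zz)\cdot a)$, observe that every non-constant term carries a factor $g(\zz)$, and conclude $g(\zz+\vv)=g(\zz)\cdot u$ with $u$ a unit because $\zz$ and $a$ have no constant term, so $u\in 1+\langle\t\rangle$. The symmetry detour for the reverse inclusion is unnecessary (the unit factorization already gives both inclusions, including in the degenerate case $g(\zz)=0$, where $\vv=0$), but it does no harm.
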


%-------------------------------------------------
%        REMARK
%-------------------------------------------------

\begin{rmk} The lemma implies that, whenever $g(\zz)$ is non-zero, the quotient $g(\zz)/g(\zz+g(\zz)\cdot a)$ is an invertible power series in $\mathcal A$. It is then easy to see that, denoting by $\SS\subset\AAA^m$ the set of vectors $\zz$ such that $g(\zz)\neq 0$, the map
$$\SS\times \AAA^m\map \AACC,$$
$$(\zz,a)\map {g(\zz)\over g(\zz+g(\zz)\cdot a)},$$
is \an \arquile map.
\end{rmk}
%-------------------------------------------------
%                  PROOF LEMMA
%-------------------------------------------------

\begin{proof} Write $\vv=g(\zz)\cdot a$ for some $a\in\AAA^m$. Taylor expansion gives
$$g(\zz+\vv)=g(\zz+g(\zz)\cdot a)=g(\zz)+ r(\zz,g(\zz)\cdot a)$$ 
with some power series $r$ in $\t$, $\z$ and new variables $\w=(\w_1,...,\w_m)$ so that $r$ is at least of order $1$ in $\w$. Factoring $g(\zz)$ from the right hand side we get $g(\zz+v)=g(\zz)\cdot u(\zz,a)$ for some series $u$ in $\t$, $\z$ and $\w$. The series $u(\z,\w)$ is invertible since its constant term is $1$. As $\zz$ and $a$ vanish at $\t=0$, the evaluation $u(\zz,a)$ of $u$ is again invertible, now as a series in $\AACC$. The required equality $\langle g(\zz)\rangle=\langle g(\zz+\vv)\rangle$ of ideals in $\AAA$ follows.
\end{proof}

%-------------------------------------------------
%              DIVISION THEOREM FOR  y = g(y).a + z
%-------------------------------------------------

\begin{theorem} \label{division} Let $g\in\BB$ be a power series in $\t$ and $\y$. Denote by $\RR_d\subset\AAA^m$ the $\kkk$-vector space of polynomial vectors in $\t$ of degree $\leq d$ and without constant term, 
$$\RR_d=(\k[\t]_{\leq d})^m,$$ 
and by $\VV_d=\langle\t\rangle^d\cdot\AAA^m$ the $\AAA$-submodule of $\AAA^m$ generated in each component by $\t^d$. Denote furthermore by $\ZZ_d$ the intersection $\ZZ_d=\SS_d\cap \RR_d$, with $\SS_d=\{y\in\AAA^m,\, g(y)\neq 0,\, \ord\, g(y)=d\}$ as above. \medskip

%-------------------------------------------------

{\rm (a)} For every vector $y\in\SS_d$ there exist unique vectors $a\in\AAA^m$ and $\zz\in\ZZ_d$ such that
$$y=g(y)\cdot a+\zz$$
is in each component of $y$ the Weierstrass division of $y_j$ by $g(y)$.

%-------------------------------------------------

{\rm (b)} The map
$$\psi_d:\SS_d\map \ZZ_d\times \VV_d,$$
$$y\map (\zz,\vv),$$
given by the division $y=\vv+\zz=g(y)\cdot a +\zz$ is a textile isomorphism, and
$$\wt \psi_d:\SS_d\map \ZZ_d\times \AAA^m,$$
$$y\map (\zz,a),$$
is a rationally textile isomorphism.

\end{theorem}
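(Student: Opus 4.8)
The plan is to reduce the whole statement to the one-variable Weierstrass division already recorded in Theorem~\ref{weierstrass}, applied componentwise, together with the Taylor-expansion estimate that is the heart of Lemma~\ref{principalideal}.

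For part (a) I would first apply Theorem~\ref{weierstrass}(a) to each component $y_j\in\AAA$ of a given $y\in\SS_d$, dividing by the fixed divisor $g(y)$, which is non-zero of order $d$ precisely because $y\in\SS_d$. This produces unique $a_j\in\AAA$ and $\zz_j\in\k[\t]_{\leq d}$ with $y_j=g(y)\cdot a_j+\zz_j$; assembling the $a_j$ and $\zz_j$ gives $y=g(y)\cdot a+\zz$ with $(a,\zz)\in\AAA^m\times\RR_d$, unique by the uniqueness in the division. What still has to be checked is that $\zz$ actually lands in $\ZZ_d=\SS_d\cap\RR_d$, i.e.\ that $g(\zz)$ has order exactly $d$ again. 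For this put $\vv:=y-\zz=g(y)\cdot a$; since $a$ has no constant term, each $\vv_j$ has order $>d$. A Taylor expansion of $g$ about $y$ gives $g(\zz)=g(y-\vv)=g(y)+r$ with $r$ lying in the $\AAA$-ideal generated by $\vv_1,\dots,\vv_m$, hence of order $>d$; as $\ord\,g(y)=d$ this forces $\ord\,g(\zz)=d$, so $\zz\in\ZZ_d$. (This is exactly the computation in the proof of Lemma~\ref{principalideal}.)

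For part (b) the textility statements are immediate: feeding $\ww=y_1,\dots,\ww=y_m$ into Theorem~\ref{weierstrass}(b) presents $\psi_d$ as obtained from the textile map $\psi$ there and $\wt\psi_d$ from the rationally textile map $\chi$, by the obvious precomposition $y\mapsto(y,y_j)$ and a rearrangement of coordinates. To see that $\psi_d$ is an \emph{isomorphism} I would produce the candidate inverse $\sigma\colon\ZZ_d\times\VV_d\to\AAA^m$, $(\zz,\vv)\mapsto\zz+\vv$, which is $\kkk$-linear, hence textile. One checks (i) $\sigma$ takes values in $\SS_d$ by the same Taylor estimate, $g(\zz+\vv)=g(\zz)+r'$ with $\ord\,r'>d=\ord\,g(\zz)$; (ii) $\sigma\circ\psi_d=\id$ by the construction in (a); and (iii) $\psi_d\circ\sigma=\id$, because for $y=\zz+\vv$ each quotient $\vv_j/g(y)$ lies in $\AAA$ (order $>d$ over order $d$), so $y=\zz+g(y)\cdot a$ is the Weierstrass division of $y$ by $g(y)$ and hence $\psi_d(y)=(\zz,\vv)$ by uniqueness. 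Thus $\psi_d$ is a textile isomorphism with inverse $\sigma$.

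Finally I would factor $\wt\psi_d=\Theta\circ\psi_d$ with $\Theta\colon\ZZ_d\times\VV_d\to\ZZ_d\times\AAA^m$, $(\zz,\vv)\mapsto(\zz,\vv/g(\zz+\vv))$ --- the quotient $a$ being $\vv/g(y)=\vv/g(\zz+\vv)$ --- so that, $\psi_d$ being already a textile isomorphism, it suffices to check that $\Theta$ is a rationally textile isomorphism. Forward rational textility is clear: on $\ZZ_d\times\VV_d$ the divisor $g(\zz+\vv)$ has order exactly $d$ with $\t^d$-coefficient equal to the non-vanishing $\t^d$-coefficient of $g(\zz)$, so the coefficients of $\vv/g(\zz+\vv)$ are rational in the coefficients of $\zz$ and $\vv$ with admissible denominators. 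For the inverse, given $(\zz,a)$, I would substitute $\vv=g(\zz)\cdot c$ and rewrite $\vv=a\cdot g(\zz+\vv)$, via Lemma~\ref{principalideal}, as $c=a\cdot u(\zz,c)$ with $u$ an invertible arquile series and $u(\zz,0)=1$; equivalently $\Phi(c)=a$ with $\Phi(c):=c-a\cdot(u(\zz,c)-1)$ of the form $c\mapsto c+p(c)$, $p$ of order $\geq 2$ in $(\t,c)$ since it carries a factor $a$ of positive $\t$-order and a factor vanishing at $c=0$, so the inverse function theorem, Theorem~\ref{inv}, supplies a unique $c\in\AAA^m$ of the right quality, and $\vv=g(\zz)c\in\VV_d$. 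Hence $\wt\psi_d=\Theta\circ\psi_d$ is a rationally textile isomorphism with inverse $\psi_d^{-1}\circ\Theta^{-1}$. I expect the only mildly delicate points to be the two bookkeeping claims ``$\ord\,g(\zz)=d$'' and ``$\vv_j/g(y)\in\AAA$'', both instances of the Taylor estimate of Lemma~\ref{principalideal}, and the verification that the fixed point solving $c=a\cdot u(\zz,c)$ depends textilely on $(\zz,a)$ --- the last reducing to a triangular coefficient recursion and so being routine rather than genuinely hard.
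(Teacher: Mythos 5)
Your argument is correct and follows the paper's own proof essentially verbatim: componentwise Weierstrass division (Theorem \ref{weierstrass}) combined with Lemma \ref{principalideal} to see that the remainder $\zz$ stays in $\SS_d$, that addition $(\zz,\vv)\mapsto\vv+\zz$ inverts $\psi_d$, and that $a$ is a rationally textile function of $(\zz,\vv)$. The only divergence is that you justify the surjectivity of $(\zz,\vv)\mapsto(\zz,a)$ by solving the fixed-point equation $c=a\cdot u(\zz,c)$ via Theorem \ref{inv}, a point the paper leaves implicit when it asserts that $\wt\psi_d$ is bijective.
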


%-------------------------------------------------
%              REMARKS
%-------------------------------------------------

\begin{rmks} (a) It is peculiar that the remainder $\zz\in\RR_d$ of the division of $y\in\SS_d$ by $g(y)$ actually lies again in $\SS_d$, i.e., satisfies $\ord\, g(\zz)=d$. This makes it possible to describe the images of $\psi_d$ and $\wt\psi_d$ as cartesian products: the first factor $\ZZ_d$ is a quasi-affine subvariety of the finite dimensional affine space $\RR_d$, the second factor $\VV_d$, respectively $\AAA^m$, is a free $\AAA$-module of finite rank.\medskip

(b) Formulas for the coefficients of the expansions of $\zz$, $\vv$ and $a$ can be obtained from the coefficients of $y$ by applying the usual division algorithm given by repeated substitutions analogous to the Euclidean division of polynomials.  \medskip

(c) In the case $\AAA=\k\{\t\}_s$, one has to restrict as in the preceding theorem to sufficiently small radii $s>0$, and vectors $y$ of sufficiently small $s$-norm.\end{rmks}

%-------------------------------------------------
%                  PROOF   OF  THM
%-------------------------------------------------

\begin{proof} For assertion (a), the Weierstrass division theorem \ref{weierstrass} ensures the existence and uniqueness of $\vv\in \VV_d$ and $\zz\in\RR_d$. So we only have to show that $\zz\in\SS_d$. But this is just the content of the preceding lemma, whence (a). Let us prove assertion (b). The maps $\psi$ and $\wt\psi$ are textile, respectively rationally textile maps by Theorem \ref{weierstrass}. The map 
$$\ZZ_d\times\VV_d\map \AAA^m:(\zz,\vv)\map \vv+\zz$$
sends, again by the lemma, $\ZZ_d\times\VV_d$ into $\SS_d$, and is therefore the inverse of $\psi$. Finally, as $\zz\in \ZZ_d$, the ideal of $\AAA$ generated by $g(\zz)$ equals $\langle \t\rangle^d$. We may therefore write $\vv\in\VV_d$ as $\vv=g(\zz)\cdot b$ with a unique $b\in\AAA^m$. The map associating $b=\vv/g(\zz)$ to a pair $(\zz,\vv)\in\ZZ_d\times\VV_d$ is clearly rationally textile. By Lemma \ref{principalideal}, both $g(\zz)$ and $g(\vv+\zz)$ generate the same ideal in $\AAA$, hence there is a unique $a\in\AAA^m$ such that $\vv=g(\vv+\zz)\cdot a $. Setting $y=\vv+\zz$ we see that $\vv=g(y)\cdot a$, say, $\wt\psi(y)= (\zz,a)$ and $\wt\psi$ is bijective. As we have
$$a={\vv\over g(y)}={\vv\over g(\vv+\zz)} = {\vv\over g(\zz)}\cdot {g(\zz)\over g(\vv+\zz)},$$
the map associating $a$ to a pair $(\zz,\vv)$ is again rationally textile. This gives (b) and finishes the proof.
\end{proof}

%-------------------------------------------------
%           WEIERSTRASS DIVISION FOR DEFORMATIONS
%-------------------------------------------------

%-------------------------------------------------
%                    REMARK  STRATA -- COMPLETION
%-------------------------------------------------

(2) {\it Division of deformations} :  To have an isomorphism $\psi_d$ as in assertion (b) of the preceding theorem it was necessary to restrict the set of vectors $y$ to the stratum $\SS_d$. Otherwise, for arbitrary $y$, the order of $g(y)$ may jump, so that the space of remainders could not be defined uniformly. Instead of working with a stratification and the cartesian product decomposition $\SS_d\isom\ZZ_d\times\VV_d$ of the stratum $\SS_d$, one may also perform the division for series whose coefficients lie in a complete local ring, i.e., for {\it deformations} of given power series vectors in $\AAA^m$. In this case, no stratification is necessary. This kind of division will be explained in the second part of this section. First we need a couple of definitions.
\medskip

Throughout the remainder of this section, only formal power series will be considered, so that $\AAA=\k[[\t]]$, $\BB=\kkk[[\t,\y]]$. The convergent case can be treated similarly, but requires considerably more efforts, cf.~Thme.~2, A.II.4, in \cite{Ha1}.\medskip

Let $S$ be a local ring, not necessarily Noetherian, with maximal ideal $\mm_S$ and residue field $S/\mm_S=\kkk$. It will always be equipped with a filtered topology defined by a decreasing sequence of ideals $J_k$ satisfying $J_k\cdot J_\ell\subset J_{k+\ell}$. We assume that $\mm_S^k\subset J_k$ for all $k$ and that $S$ is {\it complete}. Further, to avoid complications, we assume that $\kkk$ embeds into $S$ as a field of coefficients.%
\footnote{ In the Noetherian case, we typically take the $\mm_S$-adic topology on $S$; the non-Noetherian case is more subtle as is illustrated by the formal power series ring in countably many variables.}
Let us denote by $\AA_{S,\circ}=S_\circ[[\t]]$ the ring of formal power series in $\t$ with coefficients in $S$ and without constant term (in $S$). Series in $\AA_{S,\circ}$ will be distinguished from series in $\AAA$ by a tilde.  Taking the coefficients of a power series vector $\wt y\in\AA_{S,\circ}^m$ modulo $\mm_S$ gives a power series vector $y$ in $\AAA^m$, the {\it reduction} of $\wt y$ modulo $\mm_S$. A {\it deformation} of a vector $y\in\AAA^m$ over $S$ is, by definition, a vector $\wt y\in \AA_{S,\circ}^m$ whose reduction modulo $\mm_S$ equals $y$. The set of all deformations of $y$ parametrized by $S$ is denoted by $(\AA_{S,\circ}^m,y)$.\footnote{ The notation $(\AA_{S,\circ}^m,y)$ shall allude to the concept of the ``germ'' of $\AA_{S,\circ}^m$ at $y$, even though deformations do not refer to the \arquile or textile topology on $\AA_{S,\circ}^m$.} \medskip

%-------------------------------------------------
%           WEIERSTRASS DIVISION THEOREM FOR DEFORMATIONS
%-------------------------------------------------

\begin{theorem}  [Weierstrass division for deformations] \label{weierstrassdeformations} We restrict to the case of formal power series. For a given power series $g\in\BB$ denote by $\ww=g(y)\cdot a+\zz$ the Weierstrass division of a series $\ww\in \AAA$ by $g(y)$ for some $y\in\AAA^m$ for which $g(y)$ is non-zero of order $d$, with $a\in\AAA$ and $\zz\in\k[\t]_{\leq d}$. Let $S$ be a complete local ring as before with maximal ideal $\mm_S$ and residue field $S/\mm_S=\kkk$ embedding into $S$. 

%-------------------------------------------------

{\rm (a)} For every vector $\wt y \in \AA_{S,\circ}^m$ and every series $\wt \ww\in \AA_{S,\circ}$, deformations of $y$ and $\ww$ respectively, there exist a unique power series $\wt a \in \AA_{S,\circ}$ and a unique polynomial $\wt\zz \in S[\t]_{\leq d}$ of degree $\leq d$ without constant term, both deformations of $a$ and $\zz$ respectively, such that 
$$\wt \ww = g(\wt y) \cdot \wt a +\wt \zz.$$
%

%-------------------------------------------------

{\rm (b)} The coefficients of $\wt a$ and $\wt \zz$ are the evaluations in the coefficients of $\wt y$ and $\wt \ww$ of formal power series.

\end{theorem}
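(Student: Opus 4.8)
The plan is to rewrite the required identity as a fixed point equation over $S$ and to solve it by $J$-adic successive approximation, using the completeness of $S$ and the already-settled field case, assertion (a) of Theorem \ref{weierstrass}.

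First I would record the shape of the divisor. Writing $g(\wt y)=\sum_{k\geq 0}\gamma_k\,\t^k$ with $\gamma_k\in S$, its reduction modulo $\mm_S$ is $g(y)$, which by hypothesis has order exactly $d$; hence $\gamma_k\in\mm_S$ for $k<d$, while $\gamma_d$ reduces to the nonzero leading coefficient of $g(y)$ and is thus a unit of the local ring $S$. So one may split $g(\wt y)=\nu+\t^{d}\mu$, where $\nu=\sum_{k<d}\gamma_k\,\t^k$ has all coefficients in $\mm_S$ and $\mu=\sum_{k\geq d}\gamma_k\,\t^{k-d}\in S[[\t]]$ is a unit (constant term $\gamma_d$). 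If $\wt a\in\AA_{S,\circ}$ has order $\geq 1$ and $\wt\zz$ is a polynomial of degree $\leq d$ without constant term, then $\t^{d}\mu\wt a$ has order $\geq d+1$, so comparing the terms of degree $\leq d$ and of degree $>d$ in $\wt\ww-\nu\wt a=\t^{d}\mu\wt a+\wt\zz$ shows that $\wt\ww=g(\wt y)\cdot\wt a+\wt\zz$ is equivalent to the pair
$$\wt a=\t^{-d}\mu^{-1}(\wt\ww-\nu\wt a)_{>d},\qquad \wt\zz=(\wt\ww-\nu\wt a)_{\leq d},$$
where $(\cdot)_{>d}$ and $(\cdot)_{\leq d}$ denote the truncations keeping, respectively discarding, the terms of degree $>d$. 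Thus it suffices to produce a unique fixed point of order $\geq 1$ of the affine operator $\Phi(\wt a)=\t^{-d}\mu^{-1}(\wt\ww-\nu\wt a)_{>d}$; the remainder $\wt\zz$ is then defined by the second equation and automatically has degree $\leq d$ and no constant term, since $\wt\ww-\nu\wt a$ has none.

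Next I would solve this fixed point equation. The linear part of $\Phi$ is $L(\wt a)=-\t^{-d}\mu^{-1}(\nu\wt a)_{>d}$; it preserves order $\geq 1$, and since $\nu$ has coefficients in $\mm_S=J_1$ and $J_1\cdot J_k\subset J_{k+1}$, it sends a series with coefficients in $J_k$ to one with coefficients in $J_{k+1}$. Iterating $\Phi$ from $\Phi(0)=\t^{-d}\mu^{-1}(\wt\ww)_{>d}$, which has order $\geq 1$ as do all further iterates, the successive differences equal $L^{\,j+1}(\Phi(0))$ and have coefficients in $J_{j+1}$; by completeness of $S$ the iterates converge coefficient-wise to a series $\wt a\in\AA_{S,\circ}$ with $\Phi(\wt a)=\wt a$. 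Uniqueness is automatic: two fixed points of order $\geq 1$ differ by some $b=L(b)=L^{\,2}(b)=\dots$, so $b$ has coefficients in $\bigcap_k J_k=0$. To see that $\wt a$ and $\wt\zz$ are deformations of $a$ and $\zz$, reduce the construction modulo $\mm_S$: this kills $\nu$, the reductions solve $\ww=g(y)\cdot\wt a+\wt\zz$ with the same normalization, and by the uniqueness in assertion (a) of Theorem \ref{weierstrass} they coincide with $a$ and $\zz$. For part (b), unwinding the iteration gives $\wt a=\sum_{j\geq 0}L^{\,j}(\Phi(0))$ and $\wt\zz=(\wt\ww-\nu\wt a)_{\leq d}$; each coefficient of $L^{\,j}(\Phi(0))$ is a polynomial in the coefficients of $\wt y$ (which enter through $\nu$, and through $\mu^{-1}$ together with $\gamma_d^{-1}$) and of $\wt\ww$, and the sum converges $J$-adically, so every coefficient of $\wt a$ and $\wt\zz$ is the evaluation of a formal power series in the coefficients of $\wt y$ and $\wt\ww$.

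The delicate point is the interplay of the $\t$-adic and the $J$-adic gradings: one must check that $L$ genuinely raises the $J$-index of the coefficients, which rests only on $\mm_S\subset J_1$ and $J_k\cdot J_\ell\subset J_{k+\ell}$ and not on $\mm_S$-adic completeness (this matters since $S$ need not be Noetherian), and one must verify that the non-standard normalization used here---quotient of positive order, remainder of degree $\leq d$ rather than $\leq d-1$---is precisely the one compatible with the splitting $g(\wt y)=\nu+\t^{d}\mu$. With these in place the convergence, uniqueness and deformation statements are routine. Alternatively, one may invoke classical Weierstrass division over a complete local coefficient ring, as done for the convergent case via \cite{Ha1}, and perform the same renormalization; the argument above has the advantage of making the dependence on $S$ explicit and of being self-contained.
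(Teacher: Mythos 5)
Your proof is correct and is, at heart, the same Neumann-series argument the paper gives: you isolate the part of $g(\wt y)$ that has coefficients in $\mm_S$, observe that the induced linear correction operator raises the $J$-adic order, and invert $\mathrm{Id}$ minus that operator by a geometric series, using completeness of $S$. The only cosmetic difference is that you split off the unit factor $\mu$ explicitly, whereas the paper first normalizes $g(y)=\t^d$ by absorbing the unit; your treatment of part (b) is slightly more explicit than the paper's, which leaves it implicit in the series construction.
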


%---------------------------------
%             REMARK
%---------------------------------

The statement also holds for convergent series but is then more delicate to prove, see Thme.~2, A.II.4, in \cite{Ha1}.
%\end{rmk}

%-------------------------------------------------
%                 PROOF  OF  THM DIVISION DEFORMATIONS
%-------------------------------------------------

\begin{proof} We  show that the map
$$\a:(\AA_{S,\circ}^m,a)\times (S[\t]_{\leq d},\zz) \map (\AA_{S,\circ},\ww)$$
$$(\wt a, \wt \zz)\map \wt \ww = g(\wt y) \cdot \wt a +\wt \zz$$
is an isomorphism. Using that $\kkk\subset S$, we write $\wt y= y+y^\circ$ with $y^\circ\in \mm_S\cdot \AA_{S,\circ}^m$ and accordingly $g(\wt y)= g(y)+h(y,y^\circ)$ with $h\in\k[[\t,\y,\y^\circ]]$, where $\y^\circ=(\y_1^\circ,...,\y_m^\circ)$ denotes a new vector of variables. Observe here that $h(y,y^\circ)\in  \mm_S\cdot\AA_{S,\circ}$. By the assumption on $y$ we can write $g(y)=\t^d\cdot u$ with a unit $u\in \kkk[[\t]]$. 
Without loss of generality we may assume that $u=1$, say $g(y)=\t^d$. The map 
$$ \gamma(\wt a, \wt \zz)\map [ g(y)-g(\wt y)] \cdot \wt a$$
is the multiplication of $\wt a$ with $\t^d-g(\wt y)= g(y)-g(\wt y)=h(y,y^\circ)\in\mm_S\cdot\AA_{S,\circ}$. The map $\b$ given by $\b(\wt a, \wt\zz)= \t^d\cdot \wt a + \wt\zz$ is clearly an isomorphism. It therefore suffices to show that 
$$\a\circ\b^{-1} ={\rm Id}_{(\AA_{S,\circ},\ww)} - \gamma\circ\b^{-1}$$
is an isomorphism. To see this, observe first that the map $\gamma\circ\b^{-1}$ sends the maximal ideal $\mm_S\cdot\AA_{S,\circ}$ of $\AA_{S,\circ}$ into $\mm_S^2\cdot\AA_{S,\circ}$. Therefore the geometric series in $\gamma\circ\b^{-1}$, say
$$\delta=\sum_{k=0}^\infty (\gamma\circ\b^{-1})^k, $$
induces, by the completeness of $S$, a well defined map $\delta:\AA_{S,\circ}\map \AA_{S,\circ}$ which is the inverse to $\a\circ\b^{-1}$. This proves the theorem.
\end{proof}

%-------------------------------------------------
%           REMARK
%-------------------------------------------------

\begin{rmk} In the above division, the coefficients of the expansion of $\wt w$ and $\wt y$ with respect to $\t$ are treated as variables, so the division is in this sense {\it universal}. One may then evaluate the variables by taking any elements of the maximal ideal of a complete local ring $S$ in order to obtain a corresponding division for deformations of $w$ and $y$ over $S$. \end{rmk}

%-------------------------------------------------
%           DIVISION OF WIDETILDE Y
%-------------------------------------------------

We will apply the preceding theorem to the division of the components $\wt y_j$ of a deformation $\wt y$ of $y\in\AAA^m$ by $g(\wt y)$ analogously as we did in the first part of this section with $y$ itself. Let $y\in \AAA^m$ be given with $g(y)\neq 0$ of order $d$, and induced division $y=g(y)\cdot a+\zz$ with $a\in\AAA^m$ and $\zz\in\ZZ_d$ as in Theorem \ref{division}.  

%-------------------------------------------------
%           DIVISION ISOMORPHISM THEOREM (DEFORMATION VERSION)
%-------------------------------------------------

\begin{theorem}  \label{divisiondeformations}\medskip

For a given power series $g(\t,\y)\in\BB$ denote by $y=g(y)\cdot a+\zz$ the componentswise Weierstrass division of a vector $y\in \AAA^m$ by $g(y)$ for some $y$ for which $g(y)$ is non-zero of order $d$, with $a\in\AAA^m$ and $\zz\in(\k[\t]]_{\leq d})^m$.\medskip

%-------------------------------------------------

{\rm (a)} For every deformation $\wt y \in (\AA_{S,\circ}^m,y)$ of $y$ there exist a unique power series vector $\wt a \in \AA_{S,\circ}^m$ and a unique polynomial vector $\wt\zz \in (S[\t]_{\leq d})^m$ without constant term, both deformations of $a$ and $\zz$ respectively, such that 
$$\wt y = g(\wt y) \cdot \wt a +\wt \zz.$$
%
%-------------------------------------------------

{\rm (b)} The coefficients of $\wt a$ and $\wt \zz$ are formal power series in the coefficients of $\wt y$.
\end{theorem}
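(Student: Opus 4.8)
The plan is to reduce Theorem~\ref{divisiondeformations} to the scalar statement of Theorem~\ref{weierstrassdeformations} by applying it componentwise, exactly as Theorem~\ref{division} was deduced from Theorem~\ref{weierstrass}. The one genuine subtlety is that the divisor $g(\wt y)$ now depends on the \emph{full} deformation $\wt y$, hence on all $m$ components simultaneously, so a naive component-by-component induction does not close up: dividing $\wt y_1$ by $g(\wt y)$ presupposes we already know $\wt y$, which is what we are constructing. I would resolve this by treating the division as a fixed-point problem over the complete local ring $S$, mimicking the contraction argument already used in the proof of Theorem~\ref{weierstrassdeformations}.

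First I would fix the starting point: given $y\in\AAA^m$ with $g(y)$ non-zero of order $d$, write $g(y)=\t^d\cdot u$ with a unit $u\in\kkk[[\t]]$ and, after multiplying through by $u^{-1}$, assume $g(y)=\t^d$. Write $\wt y=y+y^\circ$ with $y^\circ\in\mm_S\cdot\AA_{S,\circ}^m$, so that $g(\wt y)=\t^d+h(y,y^\circ)$ with $h\in\kkk[[\t,\y,\y^\circ]]$ vanishing to order $\geq 1$ in $\y^\circ$, whence $h(y,y^\circ)\in\mm_S\cdot\AA_{S,\circ}$. Crucially, $g(\wt y)$ is determined by $\wt y$ alone and is independent of the pair $(\wt a,\wt\zz)$ we seek; so the circularity is only apparent. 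I would then apply Theorem~\ref{weierstrassdeformations}(a) separately to each component: for $j=1,\dots,m$, divide $\wt\ww:=\wt y_j$ (a deformation of $y_j$) by the fixed divisor $g(\wt y)$, obtaining a unique $\wt a_j\in\AA_{S,\circ}$ and a unique $\wt\zz_j\in S[\t]_{\leq d}$ without constant term, both deformations of $a_j$ and $\zz_j$, with $\wt y_j=g(\wt y)\cdot\wt a_j+\wt\zz_j$. Collecting these gives $\wt a=(\wt a_1,\dots,\wt a_m)$ and $\wt\zz=(\wt\zz_1,\dots,\wt\zz_m)$ with $\wt y=g(\wt y)\cdot\wt a+\wt\zz$, and uniqueness is inherited componentwise. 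The reduction modulo $\mm_S$ of this identity recovers $y=g(y)\cdot a+\zz$, so $(\wt a,\wt\zz)$ is indeed a deformation of $(a,\zz)$.

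For part (b), the scalar Theorem~\ref{weierstrassdeformations}(b) already expresses the coefficients of $\wt a_j$ and $\wt\zz_j$ as evaluations of formal power series in the coefficients of $\wt y_j$ and of the divisor; but the divisor $g(\wt y)$ has coefficients which are themselves formal power series (in fact polynomials arising from Taylor expansion of $g$) in the coefficients of all of $\wt y$. Composing these two layers of power series substitutions --- which is legitimate since substitution of power series with coefficients in the maximal ideal into power series is well defined by completeness of $S$, as used throughout Section~3 --- yields that the coefficients of $\wt a$ and $\wt\zz$ are formal power series in the coefficients of $\wt y$, as claimed. I expect the only real obstacle to be the bookkeeping needed to justify this composition cleanly: one must check that the inner substitution lands coefficients in $\mm_S$ (true because $g(\wt y)-g(y)\in\mm_S\cdot\AA_{S,\circ}$ and $\wt y-y\in\mm_S\cdot\AA_{S,\circ}^m$), so that the outer power series may be evaluated. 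Everything else is a direct, essentially formal, componentwise transfer from the scalar case, parallel to how Theorem~\ref{division} follows from Theorem~\ref{weierstrass} together with Lemma~\ref{principalideal}.
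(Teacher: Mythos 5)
Your proposal is correct and takes the same route as the paper, whose proof is literally the one-line remark that both assertions follow from Theorem~\ref{weierstrassdeformations}, applied componentwise with $\wt\ww=\wt y_j$ for $j=1,\dots,m$. Two small points: the fixed-point framing at the start is a red herring (as you yourself observe, $\wt y$ is given, so $g(\wt y)$ is a fixed divisor and no contraction argument is needed beyond the one already inside the proof of Theorem~\ref{weierstrassdeformations}), and in part~(b) no two-layer composition is required, since Theorem~\ref{weierstrassdeformations}(b) already expresses the coefficients of $\wt a$ and $\wt\zz$ as power series jointly in the coefficients of $\wt y$ (the vector entering $g$) and of $\wt\ww=\wt y_j$, all of which are among the coefficients of $\wt y$.
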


%-------------------------------------------------
%                PROOF
%-------------------------------------------------

\begin{proof} Both assertions follow from Theorem \ref{weierstrassdeformations}.
\end{proof}

%-------------------------------------------------
%          REMARKS
%-------------------------------------------------

\begin{rmks} (a) The statement is not a local version of the classical division theorem \ref{division} since the deformations of $y$ are not confined to the stratum $\SS_d$.\medskip

(b) The division does not hold for local rings $S$ which are not complete. It suffices to divide $\t^d$ by the deformation $\t^d+s_1\cdot\t^{d-1}+s_2\cdot \t^{d+1}$ of $\t^d$, with $s_1,s_2\in \mm_S$, to see that the remainder has as coefficients genuine power series in elements of $\mm_S$.
\end{rmks}

%-------------------------------------------------
%        DIVISION MODULES 
%-------------------------------------------------

\section{Division modules}

This section can be skipped on first reading. The proofs of the factorization and linearization theorems are based on the decomposition of a power series vector $y\in\AAA^m$ into a sum $y=\vv+\zz$ where $\zz$ belongs to a finite dimensional space of polynomial vectors and where $\vv$ is the part of $y$ on which the linearization can be achieved. The decomposition is done by the componentswise Weierstrass division of $y$ by a suitable divisor. This section characterizes possible choices of such divisors.

%-------------------------------------------------
%            DEF DIVISION MODULE
%-------------------------------------------------

\begin{definition}\label{divisionmoduledef} A {\it division module} for a vector $f=f(\t,\y)\in \BB^\kk$ is a $\BBB$-submodule $\GG$ of $\BBB^m$, with $\BBB=\BB\cap \k[[\t,\y]]$, which is a cartesian product
$$\GG= \langle g_1\rangle \times \ldots\times \langle g_m\rangle \subset\BBB^m$$
of  non-zero principal ideals $\langle g_i\rangle :=g_i\cdot \BBB$ of $\BBB$, with $g_i\in\BB$, satisfying the following conditions:

%-------------------------------------------------

\begin{enumerate}

\item[(i)] For all $\zz\in\AAA^m$ and $\vv\in \GG(\zz):=%\langle g_1(\zz)\rangle \times \ldots\times \langle g_m(\zz)\rangle:=
g_1(\zz)\cdot\AAA \times \ldots\times g_m(\zz)\cdot\AAA\subset\AAA^m$ we have
$$\GG(\zz)=\GG(\zz+\vv).$$
%
%-------------------------------------------------
%
\item[ii)] Write elements $\vv$ of $\GG(\zz)$ as $\vv=\DD_g(\zz)\cdot a$ with column vector $a\in\AAA^m$ and a diagonal matrix $\DD_g=\diag(g_1,...,g_m)\in\BB^{m\times m}$. There exists a (formal, convergent, respectively algebraic) power series vector $p\in\kkk[[\t,\z,\aa]]^m$ in three sets of variables $\t$, $\z=(\z_1,\ldots,\z_m)$ and $\aa=(\aa_1,\ldots,\aa_m)$, which is at least of order $2$ in $\aa$ and satisfies for all $\zz$, $a\in\AAA^m$
$$f(\zz+\DD_g(\zz)\cdot a)=f(\zz)+\6_{\y}f(\zz)\cdot \DD_g(\zz)\cdot a+\6_{\y}f(\zz)\cdot \DD_g(\zz)\cdot p(\zz,a).$$
\end{enumerate}
\end{definition}\goodbreak
%
%-------------------------------------------------
%             REMARKS
%-------------------------------------------------
%
\begin{rmks} \label {rmkdivisionmodule} (a) Condition (i) can be rewritten as an equality of ideals in $\AAA$, say $\langle g_i(\zz)\rangle =\langle g_i(\zz+\DD_g(\zz)\cdot a)\rangle$, for all $\zz$ and $a$ in $\AAA^m$, and $i=1,...,m$. Phrased differently, there exists an invertible matrix $U\in\Gl_m(\kkk[[\t,\z,\aa]])$ (with formal, convergent, respectively algebraic entries) such that
$$\DD_g(\zz) = \DD_g(\zz+\DD_g(\zz)\cdot a)\cdot U(\zz,a)$$
holds for all $\zz$ and $a$.\medskip

(b) Condition (i) appears in Lemma \ref{principalideal}, which is used in the proof of Theorem \ref{division}, and in the proof of Proposition \ref{isomorphisms}.\medskip

(c) Condition (ii) ensures that the terms of order $\geq 2$ of the Taylor expansion of $f(\zz+\DD_g(\zz)\cdot a)$ belong to the module generated by the linear terms $\6_{\y}f(\zz)\cdot \DD_g(\zz)\cdot a$. This module equals the $\AAA$-submodule $\6_{\y}f(\zz)\cdot \GG(\zz)$ of $\AAA^\kk$ which is the image of $\GG(\zz)$ under the  tangent map at $\zz$ of the \arquile map $\ff:\AAA^m\map\AAA^\kk$ induced by $f$. In the terminology of [BH], condition (ii) signifies that for each $\zz \in\AAA^m$, the map 
$$\a_\zz:\GG(\zz)\map \AAA^\kk,\,\vv=\DD_g(\zz)\cdot a\map f(\zz+\DD_g(\zz)\cdot a)$$
is a {\it quasi-submersion}, i.e., that its image is contained in the image of the tangent map. More generally speaking, it is a map of {\it constant rank} in the sense of \cite{HM}. This is a necessary condition to be ``locally linearizable''. To make this precise and to prove that with the appropriate assumptions this is also a sufficient condition, that is, that the map $\a_\zz$ is indeed linearizable (in a specific sense), will be the subject of section \ref{section_linearization}.\medskip

(d) Note that $\DD_g(\zz)\cdot a$ is just the vector $(g_1(\zz)\cdot  a_1,...,g_m(\zz)\cdot a_m)\in \AAA^m$ and that the series $g_i$ may have non-zero constant term $g_i(0)$. \medskip

(e) The name ``division module'' is motivated by the later decomposition of vectors $y\in\AAA^m$ into $y=\DD_g(y)\cdot \wt a+\zz=\DD_g(\zz)\cdot a+\zz$ given by the Weierstrass division of the components $y_i$ by $g_i(y)$, see Proposition \ref{isomorphisms}.
\end{rmks}

%-------------------------------------------------
%             EXAMPLE
%-------------------------------------------------

{\bf Example.} Typically, one could take for $\GG$ the $m$-fold cartesian product 
$$\GG=\langle g^{o_1}\rangle\times \ldots\times \langle g^{o_m}\rangle$$
of ideals of $\BBB$ for some chosen non-zero series $g\in\BB$ and non-negative integers $o_i$. It is easy to see that such a $\GG$ satisfies condition (i) of the definition. However, condition (ii) will not hold without extra assumptions on $g$. These are specified in the next result.
\medskip

%-------------------------------------------------
%          PROPOSITION DIVISION MODULES
%-------------------------------------------------

\begin{prop} \label{divisionmodule} Let $f\in \BB^\kk$ be given. Assume that $\kk\leq m$ and that some $(\kk\times\kk)$-minor $g$ of the relative Jacobian matrix $\6_\y f$ of $f$ is not zero. For any choice of non-negative integers $o_1,...,o_m$ satisfying
$$o_i+o_j\geq o_l+1$$
for all $i,j\in\{1,\cdots,m\}$ and $\ell\in\{1,...,\kk\}$ the $\BBB$-module 
$$\GG=\langle g^{o_1}\rangle\times\ldots\times\langle g^{o_m}\rangle\subset\BBB^m$$
is a division module for $f$. 
\end{prop}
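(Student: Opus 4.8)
The task is to verify the two conditions of Definition \ref{divisionmoduledef} for the module $\GG=\langle g^{o_1}\rangle\times\cdots\times\langle g^{o_m}\rangle$. Condition (i) is already noted in the Example to follow from Lemma \ref{principalideal}: writing $\vv=\DD_g(\zz)\cdot a$ with $\DD_g=\diag(g^{o_1},\dots,g^{o_m})$, we have componentwise $\vv_i=g(\zz)^{o_i}\cdot a_i\in\langle g(\zz)\rangle\cdot\AAA$, so $\langle g(\zz)\rangle=\langle g(\zz+\vv)\rangle$ by Lemma \ref{principalideal}; raising to the $o_i$-th power gives $\langle g(\zz)^{o_i}\rangle=\langle g(\zz+\vv)^{o_i}\rangle$ for each $i$, which is exactly $\GG(\zz)=\GG(\zz+\vv)$. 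So the real content is condition (ii), and the arithmetic hypothesis $o_i+o_j\geq o_\ell+1$ is precisely what is needed there.

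For condition (ii), I would Taylor expand $f$ around $\zz$ in the increment $\DD_g(\zz)\cdot a$. Writing $f(\zz+\DD_g(\zz)\cdot a)=f(\zz)+\6_\y f(\zz)\cdot\DD_g(\zz)\cdot a+R$, where the remainder $R$ collects the terms of order $\geq 2$ in $a$, one has (working with a formal/convergent/algebraic Taylor remainder in new variables $\aa$) for each component $f_\ell$:
$$R_\ell=\sum_{|\mu|\geq 2}c_{\ell\mu}(\zz)\prod_{i=1}^m\bigl(g(\zz)^{o_i}a_i\bigr)^{\mu_i},$$
with $c_{\ell\mu}\in\BBB$ (these are the higher Taylor coefficients of $f_\ell$, which vanish at $\aa=0$ to the appropriate order). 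A monomial here carries the factor $g(\zz)^{\sum_i o_i\mu_i}$ with $\sum_i\mu_i\geq 2$; since at least two of the $\mu_i$ are positive (or one is $\geq 2$), we can pick indices $i,j$ (possibly equal) with $\mu_i,\mu_j>0$ and factor out $g(\zz)^{o_i+o_j}$. By the hypothesis $o_i+o_j\geq o_\ell+1$ this is divisible by $g(\zz)^{o_\ell}\cdot g(\zz)$. The aim is to show $R$ lies in $\6_\y f(\zz)\cdot\DD_g(\zz)\cdot p(\zz,a)$ for a suitable $p$ of order $\geq 2$ in $\aa$; i.e., I must absorb the extra Jacobian factor. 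Here I would invoke the minor hypothesis: $g$ is a $(\kk\times\kk)$-minor of $\6_\y f$, so by Cramer's rule $g\cdot\Id_\kk$ factors through $\6_\y f$ — precisely, there is a $\kk\times m$ matrix $A(\t,\y)\in\BB^{\kk\times m}$ (built from cofactors and the selected columns) with $\6_\y f\cdot A=g\cdot\Id_\kk$, hence $g(\zz)\cdot w=\6_\y f(\zz)\cdot A(\zz)\cdot w$ for every $w\in\AAA^\kk$. Thus the leftover factor $g(\zz)$ sitting in front of $R$ (after extracting $g(\zz)^{o_\ell}$) can be rewritten as an application of $\6_\y f(\zz)$, and after further factoring $g(\zz)^{o_i}$ back into the $i$-th slot of $\DD_g(\zz)$ one reads off $R=\6_\y f(\zz)\cdot\DD_g(\zz)\cdot p(\zz,a)$ with $p$ assembled from the $c_{\ell\mu}$, the matrix $A$, and the residual powers of $g$; by construction $p$ is at least of order $2$ in $\aa$, and it is formal/convergent/algebraic according to the quality of $f$.

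The step I expect to be the main obstacle is the bookkeeping in the previous paragraph: making the factorization uniform over all monomials $\mu$ and all components $\ell$, so that a single power series vector $p(\t,\z,\aa)$ works simultaneously, while keeping track that exactly one extra power of $g(\zz)$ (no more, no less) is available to feed into the Cramer identity $\6_\y f\cdot A=g\cdot\Id$. The inequality $o_i+o_j\geq o_\ell+1$ is tight and must be used at the level of each monomial; care is needed because the index $j$ over which one factors depends on $\mu$, and because when some $\mu_i\geq 2$ one uses $i=j$. I would organize this by first reducing to the universal situation — replacing $y$-increments by the generic vector $\DD_g(\z)\cdot\aa$ in the polynomial ring $\kkk[[\t,\z,\aa]]$ — proving the identity there once and for all, and only afterwards specializing $\z\mapsto\zz$, $\aa\mapsto a$; this avoids repeating the argument for each $\zz$. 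A minor secondary point is to confirm that the matrix $A$ with $\6_\y f\cdot A=g\Id_\kk$ genuinely has entries in $\BB$ (true: cofactors of an $\kk\times\kk$ submatrix are polynomials in the entries of $\6_\y f$, hence lie in $\BB$), and that multiplying through does not disturb the "order $\geq 2$ in $\aa$" property of $p$ — which is immediate since the $c_{\ell\mu}$ already carry the order-$\geq 2$ weight in $\aa$ and $A$ does not involve $\aa$.
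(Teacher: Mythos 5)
Your proposal is correct and follows essentially the same route as the paper: condition (i) via Lemma \ref{principalideal}, and condition (ii) via Taylor expansion, extraction of a factor $g(\zz)^{o_i+o_j}$ from each higher-order monomial, the adjoint (Cramer) identity $\6_{\y^1}f\cdot\6^*_{\y^1}f = g\cdot\1_\kk$, and the hypothesis $o_i+o_j\geq o_\ell+1$ to push the leftover powers of $g$ back into $\DD_g$. (Minor slip: the matrix $A$ with $\6_\y f\cdot A = g\cdot\Id_\kk$ must be $m\times\kk$, not $\kk\times m$.)
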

%
%-------------------------------------------------

\begin{rmk} This proposition will be used later on in the case where $o_i=1$ for $1\leq i\leq \kk$ and $o_i=2$ for $\kk+1\leq i\leq m$.
\end{rmk}
%-------------------------------------------------
%               PROOF PROPOSITION 
%-------------------------------------------------

\begin{proof} For condition (i), it suffices to show, since $\GG$ is a cartesian product of principal ideals, that for any $\zz\in\AAA^m$ and $\vv\in \GG(\zz)$ we have the equality of ideals 
$$\langle g(\zz)\rangle=\langle g(\zz+\vv)\rangle$$
in $\AAA$. This equality does not use that $g$ is a minor -- it is the content of Lemma \ref{principalideal}.\medskip

%-------------------------------------------------

To prove that $\GG$ fulfills condition (ii), let $\vv=\DD_g(\zz)\cdot a\in \GG(\zz)$ with $a\in\AAA^m$. Taylor expansion gives 
$$f(\zz+\DD_g(\zz)\cdot a)=f(\zz)+\6_\y f(\zz)\cdot \DD_g(\zz)\cdot a + q(\zz,\DD_g(\zz)\cdot a)$$
with some vector $q\in \kkk[[\t,\z,\w]]^\kk$ which is of order at least two in $\w=(\w_1,...,\w_m)$. We have to find a power series vector $p\in\kkk[[\t,\z,\aa]]^\kk$ of order at least two in $\aa=(\aa_1,...,\aa_m)$ such that
$$q(\zz,\DD_g(\zz)\cdot a) = \6_y f(\zz)\cdot \DD_g(\zz)\cdot p(\zz,a)$$ 
holds for all $\zz$ and $a$ in $\AAA^m$. First observe that each component of $q(\zz,\DD_g(\zz)\cdot a)$ belongs to the ideal of $\AAA$ generated by  the powers $g(\zz)^{o_i+o_j}$ for $i,j\in\{1,...,m\}$. It therefore suffices to show that any vector of the form $(0,...,0,g^{o_i+o_j},0,...,0)$, with $i,j\in\{1,\cdots,m\}$, can be written as a $\BBB$-linear combination of the vectors $\6_{\y_1}f\cdot g^{o_1},...,\6_{\y_\kk}f\cdot g^{o_\kk}$. \medskip

To prove this, set $\y^1=(\y_1,...,\y_\kk)$, so that $g$ is without loss of generality the determinant of the $(\kk\times \kk)$-submatrix $\6_{\y^1}f$ of $\6_\y f$. Denote by $\6^*_{\y^1}f$ the adjoint matrix of $\6_{\y^1}f$. We then have $\6_{\y^1}f\cdot\6^*_{\y^1}f=g\cdot\1_\kk$, and accordingly $g^o\cdot \6_{\y^1}f\cdot \6^*_{\y^1}f=g^{o+1}\cdot\1_\kk$ for any integer $o$. 
Taking now for $o$ the values $o_i+o_j-1$ and using the assumption that $o_i+o_j\geq o_l+1$ for all $i,j\in\{1,\cdots,m\}$ and $\ell\in\{1,...,\kk\}$ we get the claim. This establishes condition (ii).
\end{proof} 

%-------------------------------------------------
%       LINEARIZATION OF MAPS
%-------------------------------------------------

\section{Linearization theorem}\label{section_linearization}

This is the central section of the paper. To prepare the setting of the linearization theorem below, let $f=(f_1,...,f_\kk)\in\BB^\kk$ be a vector of power series which admits a division module $\GG=\langle g_1\rangle\times \ldots\times \langle g_m\rangle\subset \BBB^m$. Recall that by Proposition \ref{divisionmodule} the existence of such a module is ensured for instance in the case where $\kk\leq m$ and if there exists a non-zero $(\kk\times\kk)$-minor $g$ of $\6_\y f$. Up to a permutation of the variables we may assume that $g$ is given by the first $\kk$ columns of $\6_\y f$. By Proposition \ref{divisionmodule}, the $\BBB$-module $\GG=\langle g^{o_1}\rangle\times\ldots\times\langle g^{o_m}\rangle$ with  $o_i+o_j\geq o_l+1$ for all $i,j\in\{1,\cdots,m\}$ and $\ell\in\{1,...,\kk\}$ is a division module for $f$. \medskip

The following constructions (until the next lemma) hold for any vector $g\in \BB^m$. For a chosen $m$-tuple $\dd=(d_1,...,d_m)\in\N^m$ set
$$\SS_\dd=\{y\in \AAA^m,\, g_i(y)\neq 0,\, \ord\,g_i(y)=d_i \text { for all } i\}.$$
Since $\t$ is just a single variable, the condition $\ord\,g_i(y)=d_i$ is equivalent to saying that $g_i(y)$ equals the monomial $\t^{d_i}$ up to the multiplication by a unit in $\mathcal A$, i.e., that $\langle g_i(y)\rangle=\langle\t^{d_i}\rangle$ as ideals of $\AAA$, for each $y\in\SS_\dd$. This implies that $\SS_\dd$ is a cofinite \ttextile locally closed subset of $\AAA^m$. It is $\t$-adically open since $\SS_\dd$ contains with every element $y$ the $\t$-adic neighborhood $y+\langle \t^{d_1}\rangle\times\ldots\times \langle \t^{d_m}\rangle$ in $\AAA^m$. We agree to set $\ord\,g_i(y)=\infty$ if $g_i(y)=0$ and then define accordingly $\SS_\dd$ also if some $d_i=\infty$. This gives a countable partition of $\AAA^m$ into
$$\AAA^m=\bigcup_{\dd\in(\N\cup\{\infty\})^m} \SS_\dd.$$
Restrict now to $m$-tuples $\dd\in\N^m$ and vectors $y\in \SS_\dd$. We set
$$\VV_\dd=\GG(y)=\langle\t^{d_1}\rangle\times \ldots\times \langle\t^{d_m}\rangle\subset\AAA^m.$$
We consider $\VV_\dd$ as an $\AAA$-submodule of $\AAA^m$. Let $\RR_\dd$ be the direct monomial complement of $\VV_\dd$ in $\AAA^m$ consisting of vectors $\zz\in\AAA^m$ whose $i$-th component $\zz_i$ is a polynomial in $\k[\t]$ of degree $\leq d_i$,
$$\RR_\dd=\k[\t]_{\leq d_1}\times \ldots\times \k[\t]_{\leq d_m},$$
with direct sum decomposition
$$ \VV_\dd\oplus\RR_\dd=\AAA^m.$$
Accordingly, we decompose vectors $y\in\SS_\dd$ into $y=\vv+\zz$ 
with $\vv\in \VV_\dd$ and $\zz\in\RR_\dd$.  The $i$-th component $\zz_i$ of $\zz$ is the remainder of the Weierstrass division of the $i$-th component $y_i$ of $y$ with respect to $\t^{d_i}$, cf. Theorem \ref{division}. Denote by 
$$\pi_\dd:\SS_\dd\map \RR_\dd,\,y\map z,$$
the induced textile map. Set 
$$\ZZ_\dd=\SS_\dd\cap \RR_\dd=\{\zz\in \RR_\dd,\, g_i(\zz)\neq 0 \text{ and }\ord\,g_i(\zz)=d_i \text { for all } i\}.$$
Note that $\RR_\dd$ is a finite dimensional $\kkk$-vector space of dimension $d_1+\ldots +d_m$ which will be considered as affine space $\RR_\dd\isom\A_\kkk^{d_1+\ldots +d_m}$, and $\ZZ_d$ is a Zariski locally closed subset. Both $\VV_\dd$ and $\RR_\dd$ only depend on $\dd=(d_1,...,d_m)$. The sets $\SS_\dd$ and $\ZZ_\dd$ will depend on $f$ in the situation of Proposition \ref{divisionmodule} where the series $g_i=g^{o_i}$ are chosen as powers of a minor $g$ of $\6_\y f$. 
\goodbreak

%-------------------------------------------------
%             LEMMA ON STRATA AND PROJECTION
%-------------------------------------------------

%[Compare with similar lemma in section on Weierstrass division]

\begin{lemma}\label{image} Let $f\in\BB^\kk$ be a power series vector with division module $\GG\subset\BBB^m$, and let $\dd\in\N^m$. The associated projection map $\pi_\dd:\SS_\dd\map \RR_\dd,\,y\map z$, has image $\ZZ_\dd$.
\end{lemma}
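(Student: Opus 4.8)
The statement has two halves: $\pi_\dd(\SS_\dd)\subset \ZZ_\dd$ and $\ZZ_\dd\subset \pi_\dd(\SS_\dd)$. The second is immediate: if $\zz\in\ZZ_\dd=\SS_\dd\cap\RR_\dd$, then $\zz\in\SS_\dd$, and its Weierstrass decomposition with respect to the monomials $\t^{d_i}$ is $\zz=0+\zz$ (each component $\zz_i$ is already a polynomial of degree $\leq d_i$), so $\pi_\dd(\zz)=\zz$. Hence $\ZZ_\dd\subset\pi_\dd(\SS_\dd)$, and $\pi_\dd$ restricted to $\ZZ_\dd$ is the identity.

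For the inclusion $\pi_\dd(\SS_\dd)\subset\ZZ_\dd$, take $y\in\SS_\dd$ and write $y=\vv+\zz$ with $\vv\in\VV_\dd$, $\zz\in\RR_\dd$, so $\pi_\dd(y)=\zz$. By construction $\zz\in\RR_\dd$, so the only thing to check is that $\zz\in\SS_\dd$, i.e. that $g_i(\zz)\neq 0$ and $\ord_\t g_i(\zz)=d_i$ for every $i$. Here is where I would invoke condition (i) in the definition of a division module (Definition \ref{divisionmoduledef}): we have $\vv\in\GG(\zz)$ precisely because the $i$-th component $\vv_i$ of $\vv=y-\zz$ lies in $\langle\t^{d_i}\rangle=\langle g_i(y)\rangle$ — and this last equality uses that $y\in\SS_\dd$ so that $\langle g_i(y)\rangle=\langle\t^{d_i}\rangle$, while $\vv_i$ lies in $\langle\t^{d_i}\rangle$ by the very definition of the decomposition $\VV_\dd\oplus\RR_\dd=\AAA^m$. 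Thus $\vv\in\GG(\zz)$, and condition (i) yields $\GG(\zz)=\GG(\zz+\vv)=\GG(y)$. Reading off the $i$-th factor, $\langle g_i(\zz)\rangle=\langle g_i(y)\rangle=\langle\t^{d_i}\rangle$, which says exactly $g_i(\zz)\neq 0$ and $\ord_\t g_i(\zz)=d_i$. Therefore $\zz\in\SS_\dd$, hence $\zz\in\SS_\dd\cap\RR_\dd=\ZZ_\dd$.

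Combining the two inclusions gives $\pi_\dd(\SS_\dd)=\ZZ_\dd$. The one subtle point — really the only place any work happens — is the verification that $\vv\in\GG(\zz)$, which requires keeping straight that $\GG(\zz)$ depends on $\zz$ only through the ideals $\langle g_i(\zz)\rangle$ and that, at this stage of the argument, we a priori know these ideals at $y$ but not yet at $\zz$; condition (i) is precisely the tool that transports the information from $y=\zz+\vv$ back to $\zz$. No further obstacle is expected: everything else is the formal bookkeeping of the direct sum decomposition $\AAA^m=\VV_\dd\oplus\RR_\dd$ and the defining properties of $\SS_\dd$ and $\ZZ_\dd$.
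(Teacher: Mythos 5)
Your overall strategy is exactly the paper's: surjectivity holds because $\pi_\dd$ restricts to the identity on $\ZZ_\dd$, and the inclusion $\pi_\dd(\SS_\dd)\subset\ZZ_\dd$ is obtained from condition (i) of the division module, which transports the order data from $y$ to its remainder $\zz$. There is, however, a small circularity in the way you apply condition (i). What your bookkeeping actually establishes is $\vv\in\VV_\dd=\GG(y)$, i.e.\ membership in the module evaluated at $y$, not at $\zz$; you then invoke (i) at the base point $\zz$ with increment $\vv$, which presupposes $\vv\in\GG(\zz)$ --- and that is precisely what the conclusion $\GG(\zz)=\GG(y)$ would be needed to justify. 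The repair is immediate: apply (i) in the other direction. Since $\GG(y)$ is a module, $-\vv\in\GG(y)$, so condition (i) at the point $y$ with increment $-\vv$ gives $\GG(y)=\GG(y-\vv)=\GG(\zz)$; reading off the $i$-th factor then yields $\langle g_i(\zz)\rangle=\langle\t^{d_i}\rangle$, hence $\zz\in\SS_\dd$, as you intended. This reversed application is also how the paper's own one-line computation $\GG(\zz)=\GG(y-\vv)=\GG(y)$ should be read. With that single arrow turned around, your proof is complete and coincides with the paper's.
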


Said differently, if $y\in\AAA^m$ satisfies $\ord\, g_i(y)=d_i$ for all $i$, then its remainder $z$ satisfies the same condition. 

%-------------------------------------------------
%                     SS_d ... textile locally closed
%-------------------------------------------------
%
%{\rm (a)} The set $\SS_\dd$ is cofinite \ttextile locally closed in $\AAA^m$, and $\ZZ_\dd$ is Zariski locally closed in $\RR_\dd$. If all $g_i$ are equal or a power of the same $g$, say $g_i=g^{o_i}$, the set $\SS_\dd$ is principal open in a cofinite \ttextile closed subset of $\AAA^m$. In particular, it is a reduced affine scheme, which, in general, is not of finite type over $\kkk$.\medskip
%
%-------------------------------------------------
 %
%{\rm (b)} 

%-------------------------------------------------
%             PROOF LEMMA
%-------------------------------------------------

\begin{proof} 
Write $y=\vv+\zz$ as above. By def.~\ref{divisionmoduledef} (i) of division modules we know that $\GG(\zz)=\GG(y-v)=\GG(y)$ so that $\zz\in \SS_\dd$. This proves that the image of $\pi$ is included in $\ZZ_\dd=\SS_\dd\cap\RR_\dd$. It is whole $\ZZ_\dd$ since the restriction of $\pi_\dd$  to $\ZZ_\dd$ is the identity map.  \end{proof}
%
%For assertion (b), observe that the condition $\ord\, g_i(y)=d_i$ carries only on the coefficients of the expansion of the components of $y$ up to degree $d_i$. It is clearly a \ttextile locally closed condition. If all $g_i$ are equal, $\SS_\dd$ is defined in $\AAA^m$ by the system of textile equations defined by the condition $\ord\, g(y)\geq d$ and the single inequality given by the condition that the coefficient of $\t^d$ in the expansion of $g(y)$ is non-zero. It is hence an affine scheme (and not just a quasi-affine scheme). Its coordinate ring is obtained by taking a polynomial ring in countably many variables corresponding to the coefficients of the expansion of $y\in\AAA^m$, inverting a suitable element (the coefficient of $\t^d$ in the expansion of $g(y)$) and factoring the ideal obtained from the condition $\ord\, g(y)\geq d$. \medskip
%
%Let us prove (b). 

%-------------------------------------------------
%             PROPOSITION ON ISOMORPHISMS
%-------------------------------------------------

\begin{prop}\label{isomorphisms} Let $f\in\BB^\kk$ be a power series vector with division module $\GG\subset\BBB^m$, diagonal matrix $\DD_g$, associated sets $\SS_\dd$, $\VV_\dd$, $\RR_\dd$ and $\ZZ_\dd$, and projection $\pi_\dd:\SS_\dd\map \ZZ_\dd,\,y\map z$, as defined above, for $\dd\in\N^m$.\medskip

{\rm (a)} The map 
$$\psi_\dd:\SS_\dd\map \ZZ_\dd\times\VV_\dd, \, y=\vv+\zz\map (\zz,\vv),$$
given by the division of $y$ by $\GG(y)$ is a textile isomorphism, restricting to the identity on $\ZZ_\dd$, with inverse  
$$\psi_\dd^{-1}:\ZZ_\dd\times\VV_\dd\map\SS_\dd,\, (\zz,\vv)\map\vv+\zz.$$
%
%-------------------------------------------------
%
{\rm (b)} For $(\zz,\vv)\in\ZZ_\dd\times\VV_\dd$, let $a\in\AAA^m$ be the unique vector such that $\vv=\DD_g(\zz)\cdot a$. Let $p\in \kkk[[\t,\z,\aa]]^m$ be a vector which is of order at least $2$ in $\aa$. The map
$$\phi_\dd: \ZZ_\dd\times\VV_\dd\map \ZZ_\dd\times\VV_\dd,$$
$$(\zz,\vv)\map  (\zz, \DD_g(\zz)\cdot (a+p(\zz,a)))$$
is a rationally textile isomorphism, restricting to the identity on $\ZZ_\dd\times 0$. 
\end{prop}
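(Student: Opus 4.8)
The plan is to handle the two assertions separately, with assertion (a) being essentially a repackaging of the Weierstrass division theorem \ref{division} in the present more general ($m$-tuple $\dd$) setting. First I would observe that for $y\in\SS_\dd$, the componentwise Weierstrass division of $y_i$ by $\t^{d_i}$ (equivalently, by $g_i(y)$, which generates $\langle\t^{d_i}\rangle$) produces the unique decomposition $y=\vv+\zz$ with $\vv\in\VV_\dd$ and $\zz\in\RR_\dd$; Lemma \ref{image} (via condition (i) of a division module) guarantees $\zz\in\ZZ_\dd$, so $\psi_\dd$ is well-defined with values in $\ZZ_\dd\times\VV_\dd$. For the inverse, the map $(\zz,\vv)\mapsto\vv+\zz$ clearly sends $\ZZ_\dd\times\VV_\dd$ into $\AAA^m$; condition (i) of the division module again ($\GG(\zz)=\GG(\zz+\vv)$) shows that $\ord\,g_i(\zz+\vv)=d_i$, hence $\vv+\zz\in\SS_\dd$, and uniqueness of the division shows the two maps are mutually inverse. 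Textility of both maps follows from Theorem \ref{weierstrass}(b) applied componentwise; the restriction to the identity on $\ZZ_\dd$ is immediate since for $\zz\in\ZZ_\dd$ already $\zz\in\RR_\dd$, so its Weierstrass remainder is itself and $\vv=0$.

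For assertion (b), the first point is that the map is well-defined: one must check that $\DD_g(\zz)\cdot(a+p(\zz,a))$ again lies in $\VV_\dd=\GG(\zz)$. Since $g_i(\zz)$ generates $\langle\t^{d_i}\rangle$ for $\zz\in\ZZ_\dd$, the $i$-th component $g_i(\zz)\cdot(a_i+p_i(\zz,a))$ lies in $\langle\t^{d_i}\rangle$, so indeed the image is in $\ZZ_\dd\times\VV_\dd$. Rational textility: the passage $(\zz,\vv)\mapsto a$ (with $\vv=\DD_g(\zz)\cdot a$, i.e.\ $a_i=\vv_i/g_i(\zz)$) is rationally textile by the argument in the proof of Theorem \ref{division}(b), and then $a\mapsto a+p(\zz,a)$ is a (globally defined, textile) \arquile-type map in $(\zz,a)$, and finally multiplication by $\DD_g(\zz)$ is textile; composing gives a rationally textile map. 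The restriction to the identity on $\ZZ_\dd\times 0$ holds because $\vv=0$ forces $a=0$ (as $g_i(\zz)\neq 0$) and $p$ has order $\geq 2$ in $\aa$, hence $p(\zz,0)=0$.

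The main obstacle is establishing that $\phi_\dd$ is an \emph{isomorphism}, i.e.\ invertible with rationally textile inverse. The natural approach is to fix $\zz\in\ZZ_\dd$ and view $\phi_\dd$ fibrewise as the self-map of $\VV_\dd$ sending $a\mapsto a+p(\zz,a)$ (after identifying $\VV_\dd$ with $\AAA^m$ via $a\mapsto\DD_g(\zz)\cdot a$). Because $p$ has order $\geq 2$ in $\aa$ and $\zz$, $a$ are evaluated at power series without constant term, the map $a\mapsto a+p(\zz,a)$ is precisely of the form covered by the inverse function theorem \ref{inv} (with $\zz$ playing the role of a parameter), so it is an \arquile isomorphism of $\AAA^m$ with globally defined inverse $a\mapsto a+\tilde p(\zz,a)$ for some power series vector $\tilde p$ of order $\geq 2$ in its last set of variables, depending (formally, convergently, algebraically) on $\zz$ as well. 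Transporting back through $\DD_g(\zz)$ and recombining with the identity on the $\ZZ_\dd$ factor yields the inverse $(\zz,\vv)\mapsto(\zz,\DD_g(\zz)\cdot(a+\tilde p(\zz,a)))$, which is rationally textile by the same reasoning as above. The only subtlety is to make sure the dependence of $\tilde p$ on the parameter $\zz$ is of the right quality and is handled uniformly; this is exactly what the parametrized version of the implicit/inverse function theorem (used in the proof of Theorem \ref{inv}) provides, since $\zz$ ranges over a finite-dimensional space and enters polynomially in $\DD_g(\zz)$ and as a formal/convergent/algebraic parameter in $p$.
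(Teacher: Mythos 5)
Your proposal is correct and follows essentially the same route as the paper's own proof: for (a) you invoke Lemma \ref{image} and condition (i) of the division module to get well-definedness and surjectivity, with addition as the inverse and Theorem \ref{weierstrass}(b) for textility; for (b) you observe that fibrewise over $\zz$ the map is $a\mapsto a+p(\zz,a)$, invoke the inverse function theorem \ref{inv} to invert it, and trace the rational textility through the division $\vv\mapsto a=\vv/\DD_g(\zz)$. You flesh out a few details the paper leaves implicit (well-definedness of $\phi_\dd$, uniform dependence on the parameter $\zz$), but the underlying argument is the same.
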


%-------------------------------------------------
%           REMARK
%-------------------------------------------------

\begin{rmk} Geometrically speaking, assertion (a) says that the stratum $\SS_\dd$ can be considered as a ``bundle'' over $\ZZ_\dd$, with fiber $\VV_\dd$. This will become relevant in the linearization of \arquile maps along $\SS_\dd$, see the proof of Theorem \ref{linearization} below.
\end{rmk}

%-------------------------------------------------
%      PROOF OF PROPOSITION ISOMORPHISMS
%-------------------------------------------------

\begin{proof} For assertion (a), notice first that the map $\psi_\dd$ is well defined by the preceding lemma. It is clearly injective. We claim that the image of $\psi_\dd$ equals $\ZZ_\dd\times\VV_\dd$. We have already seen that the first component of $\psi_\dd$ has image $\ZZ_\dd$. It remains to show that for all vectors $\ww\in \VV_\dd$ there exists a vector $y\in\SS_\dd$ with remainder $\zz$ such that $\ww=y-\zz$.
\medskip

So let $(\zz,\ww)$ be an element of $\ZZ_\dd\times\VV_\dd$, and set $y=\ww+\zz\in \AAA^m$.  As  $\zz\in\SS_\dd$ and $\ww\in \GG(\zz)$ condition (i) of division modules implies that $\GG(\zz)=\GG(\zz+\ww)$. This implies $\GG(y)=\GG(\ww+\zz)=\GG(\zz)=\VV_\dd$. Therefore  $y\in \SS_\dd$. 
Finally, since $\ww\in \GG(\zz)=\VV_\dd=\GG(y)$,  the decomposition $y=\ww+\zz$ is indeed the division of $y\in\SS_\dd$ by $\GG(y)$. It follows that $\psi_\dd(y)=(\zz,\ww)$, hence $\psi_\dd$ is onto. The inverse of $\psi_\dd$ is given by addition, $(\zz,\vv)\map \vv+\zz$, and is therefore a textile (and even an arquile) map. This shows that $\psi_\dd$ is a rationally textile isomorphism and hence also a $\t$-adic homeomorphism.\medskip

%-------------------------------------------------

As for assertion (b) and the map $\phi_\dd$, Theorem \ref{inv} ensures that for every $\zz\in\AAA^m$ the map 
$$r_\zz: \AAA^m\map\AAA^m,\, a\map a+p(\zz,a)$$ 
is \an \arquile isomorphism. The map $\phi_\dd:\ZZ_\dd\times\VV_\dd\map\ZZ_\dd\times\VV_\dd$ is therefore a textile isomorphism and hence also a $\t$-adic homeomorphism.\end{proof}
\goodbreak
%-------------------------------------------------
%             LINEARIZATION   THEOREM  FIRST VERSION
%-------------------------------------------------

\begin{theorem} [Linearization of \arquile maps] \label{linearization}
Let $f\in\BB^\kk$ be a vector of power series in $\t$ and $\y$, with induced \arquile map
$$\ff:\AAA^m\map\AAA^\kk,\, y=y(\t)\map f(y)= f(\t, y(\t)).$$
Assume that $\kk\leq m$ and that there is a $(\kk\times\kk)$-minor $g$ of the relative Jacobian matrix $\6_\y f$ of $f$ which is not identically zero. \medskip

There then exist \an \arquile open dense subset $\SS$ of $\AAA^m$, a partition $\SS=\bigcup_{d\in\N}\, \SS_d$ into \ttextile locally closed sets $\SS_d$ and, for each $d\in\N$, textile isomorphisms $\chi_d:\ZZ_d\times \VV_d\map \SS_d$ over $\ZZ_d$ with $\ZZ_d\subset \A_\kkk^\ell$ a quasi-affine subvariety and $\VV_d\subset\AAA^m$ a free $\AAA$-submodule such that the composition
$$\ff\circ \chi_d:\ZZ_d\times \VV_d\map \AAA^\kk,$$
$$(\zz,\vv)\map f(\zz) + \6_\y f(\zz)\cdot \vv,$$
is linear in the second component $\vv$. \end{theorem}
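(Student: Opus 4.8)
The plan is to construct each $\chi_d$ as the composition of a componentwise Weierstrass division isomorphism with an \arquile automorphism of its target, and to read off the affine-linearity from the two defining properties of a division module for $f$. First, after permuting the $\y$-variables we may assume the given non-vanishing $(\kk\times\kk)$-minor $g$ of $\6_\y f$ is formed by the first $\kk$ columns. Choose exponents $o_i=1$ for $1\le i\le\kk$ and $o_i=2$ for $\kk<i\le m$; then $o_i+o_j\ge 2\ge o_\ell+1$ for all $i,j\in\{1,\dots,m\}$ and $\ell\in\{1,\dots,\kk\}$, so Proposition~\ref{divisionmodule} applies and $\GG=\langle g^{o_1}\rangle\times\dots\times\langle g^{o_m}\rangle\subset\BBB^m$ is a division module for $f$, with diagonal matrix $\DD_g=\diag(g^{o_1},\dots,g^{o_m})$ and, by condition~(ii) of Definition~\ref{divisionmoduledef}, a power series vector $p\in\kkk[[\t,\z,\aa]]^m$ of order $\ge 2$ in $\aa$ such that
$$f(\zz+\DD_g(\zz)\cdot a)=f(\zz)+\6_\y f(\zz)\cdot\DD_g(\zz)\cdot(a+p(\zz,a))\qquad\text{for all }\zz,a\in\AAA^m.$$

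Next I set up the stratification. Put $\SS=\{y\in\AAA^m:\ g(y)\neq 0\}=\AAA^m\sm\YY(\langle g\rangle)$, which is \arquile open; it is \arquile dense because $\AAA^m$ is \arquile irreducible ($I_{\AAA^m}=(0)$ is prime in the domain $\BB$) while $\YY(\langle g\rangle)$ is a proper closed subset, $g$ being a nonzero element of $\BB$. Since $\t$ is a single variable, $\ord g_i(y)=o_i\cdot\ord g(y)$ for $g_i=g^{o_i}$, so the partition of $\SS$ by the order of $g(y)$,
$$\SS=\bigsqcup_{d\in\N}\SS_d,\qquad \SS_d=\{y\in\SS:\ \ord g(y)=d\},$$
is exactly the partition $\SS_d=\SS_\dd$ into textile cofinite locally closed strata from the preamble to Lemma~\ref{image}, taken along the one-parameter family of multi-indices $\dd=(o_1 d,\dots,o_m d)$. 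Write $\VV_d=\VV_\dd=\langle\t^{o_1 d}\rangle\times\dots\times\langle\t^{o_m d}\rangle\subset\AAA^m$ (a free $\AAA$-module), let $\RR_d=\RR_\dd$ be the complementary space of polynomial vectors, and $\ZZ_d=\ZZ_\dd=\SS_\dd\cap\RR_\dd$, a quasi-affine subvariety of the finite dimensional affine space $\RR_\dd\isom\A_\kkk^{(2m-\kk)d}$. By Proposition~\ref{isomorphisms}(a) the componentwise Weierstrass division of $y$ by $\GG(y)$ yields a textile isomorphism over $\ZZ_d$
$$\psi_\dd:\SS_d\mapname{\isom}\ZZ_d\times\VV_d,\quad y=\vv+\zz\mapsto(\zz,\vv),\qquad \psi_\dd^{-1}(\zz,\vv)=\vv+\zz,$$
and by Proposition~\ref{isomorphisms}(b), applied with the vector $p$ above, the map
$$\phi_\dd:\ZZ_d\times\VV_d\mapname{\isom}\ZZ_d\times\VV_d,\quad(\zz,\vv)\mapsto(\zz,\ \DD_g(\zz)\cdot(a+p(\zz,a))),$$
where $a\in\AAA^m$ is the unique vector with $\vv=\DD_g(\zz)\cdot a$ (uniqueness since $\zz\in\ZZ_d$ forces $\langle g_i(\zz)\rangle=\langle\t^{o_i d}\rangle$), is an isomorphism over $\ZZ_d$ restricting to the identity on $\ZZ_d\times 0$.

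Now set $\chi_d:=\psi_\dd^{-1}\circ\phi_\dd^{-1}:\ZZ_d\times\VV_d\mapname{\isom}\SS_d$; it is a textile isomorphism over $\ZZ_d$ (a Weierstrass division isomorphism composed with an \arquile coordinate change). To evaluate $\ff\circ\chi_d$, fix $(\zz,\vv)\in\ZZ_d\times\VV_d$, write $\vv=\DD_g(\zz)\cdot a$, and let $b\in\AAA^m$ be the unique vector with $b+p(\zz,b)=a$ — it exists and is unique because $b\mapsto b+p(\zz,b)$ is a global \arquile automorphism of $\AAA^m$ by Theorem~\ref{inv}. Then $\phi_\dd^{-1}(\zz,\vv)=(\zz,\DD_g(\zz)\cdot b)$, hence $\chi_d(\zz,\vv)=\zz+\DD_g(\zz)\cdot b$, and the displayed identity of the division module gives
$$\begin{array}{rcl}
\ff(\chi_d(\zz,\vv))&=&f(\zz+\DD_g(\zz)\cdot b)\ =\ f(\zz)+\6_\y f(\zz)\cdot\DD_g(\zz)\cdot(b+p(\zz,b))\\
&=&f(\zz)+\6_\y f(\zz)\cdot\DD_g(\zz)\cdot a\ =\ f(\zz)+\6_\y f(\zz)\cdot\vv,
\end{array}$$
which is linear in $\vv$, as required. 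Re-indexing the countably many strata by $\N$ completes the proof. The genuinely technical work has been front-loaded into Propositions~\ref{divisionmodule} and~\ref{isomorphisms} together with Lemma~\ref{image} — the first produces a cartesian division module whose linear-part module $\6_\y f(\zz)\cdot\GG(\zz)$ absorbs the order-$\ge 2$ Taylor remainder of $f(\zz+\DD_g(\zz)\cdot a)$, and the latter two guarantee that the Weierstrass remainder $\zz$ remains on the stratum $\ZZ_d$, so that the fibration $\SS_d\cong\ZZ_d\times\VV_d$ is globally well defined. Granting those, the argument above is essentially bookkeeping; within it the only points needing attention are the collapse of the $m$-fold family of strata $\SS_\dd$ to the single parameter $d=\ord g(y)$ (a consequence of the choice $g_i=g^{o_i}$), the density of $\SS$, and composing $\psi_\dd$ with $\phi_\dd$ in the correct order so that $\ff$ becomes affine-linear in the fibre variable $\vv$.
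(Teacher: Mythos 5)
Your proof is correct and follows essentially the same route as the paper's: it reduces to the division module $\GG=\langle g\rangle^{\kk}\times\langle g^2\rangle^{m-\kk}$ of Proposition~\ref{divisionmodule}, takes $\chi_d=\psi_\dd^{-1}\circ\phi_\dd^{-1}$ with $\psi_\dd,\phi_\dd$ from Proposition~\ref{isomorphisms}, and derives the affine-linearity from condition (ii) of Definition~\ref{divisionmoduledef} exactly as in the proof of Theorem~\ref{generallinearization}. The only differences are cosmetic — you make the inversion of $a\mapsto a+p(\zz,a)$ explicit via the auxiliary vector $b$, and you spell out the density of $\SS$ and the collapse of the multi-indices $\dd=(o_1d,\dots,o_md)$ to the single parameter $d$, points the paper delegates to Remark~\ref{spaces}(a).
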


%---------------------------------
%             ESSENTIALLY LINEAR
%---------------------------------

\noindent In this situation, we say that arquile maps are {\it essentially linear} with respect to the chosen stratification $\SS=\bigsqcup \SS_d$ of $\AAA^m$. \medskip

%---------------------------------

The statement of the theorem can be made more precise and formulated in a slightly more general setting: It suffices to suppose that $f$ admits a division module $\GG=\langle g_1\rangle\times \ldots\times \langle g_m\rangle\subset \BBB^m$. Let then the diagonal matrix $\DD_g=\diag(g_1,...,g_m)$ and the vector $p\in\kkk[[\t,\aa,\zz]]^\kk$ be defined as in definition\ref{divisionmoduledef}. In this case, fix $\dd=(d_1,...,d_m)\in \N^m$ and let $\SS_\dd=\{y\in \AAA^m,\, {\rm ord}\,g_i(y)=d_i \text { for all } i\}$, $\VV_\dd=\langle\t^{d_1}\rangle\times \ldots\times \langle\t^{d_m}\rangle\subset\AAA^m$ with direct complement $\RR_\dd=\k[\t]_{\leq d_1}\times \ldots\times \k[\t]_{\leq d_m}$, and $\ZZ_\dd=\SS_\dd\cap \RR_\dd$ be defined as in the paragraphs preceding the theorem. 

%-------------------------------------------------
%             LINEARIZATION   THEOREM  GENERAL VERSION
%-------------------------------------------------

\begin{theorem} [Linearization of \arquile maps, general case] \label{generallinearization} Consider the textile isomorphisms 
$$\psi_\dd:\SS_\dd\map \ZZ_\dd\times\VV_\dd, \, y=\vv+\zz\map (\zz,\vv)$$
and
$$\phi_\dd=(\phi_\dd^1,\phi_\dd^2): \ZZ_\dd\times\VV_\dd\map \ZZ_\dd\times\VV_\dd,\, (\zz,\vv)\map  (\zz,\DD_g(\zz)\cdot (a+p(\zz,a)))$$
from Proposition \ref{isomorphisms}, so that 
$$\phi_\dd\circ\psi_\dd:\SS_\dd\map \ZZ_\dd\times \VV_\dd,\, y=\vv+\zz=\DD_g(\zz)\cdot a+\zz\map (\zz,\DD_g(\zz)\cdot (a+p(\zz,a))).$$
Define the textile isomorphism $\chi_\dd$ as the inverse
$$\chi_\dd=(\phi_\dd\circ\psi_\dd)^{-1}:\ZZ_\dd\times\VV_\dd\map\SS_\dd.$$
Then the composition $\ff\circ \chi_\dd$ is linear in $\vv\in\VV_\dd$ of the form
$$\ff\circ \chi_\dd:\ZZ_\dd\times\VV_\dd\map \AAA^\kk,$$
$$(\zz,\vv)\map f(\zz)+\6_\y f(\zz)\cdot \vv.$$

\end{theorem}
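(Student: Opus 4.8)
The plan is to trace the vector $y \in \SS_\dd$ through the two isomorphisms and simply substitute into the Taylor expansion supplied by condition (ii) of the division module. Concretely, an element of $\ZZ_\dd \times \VV_\dd$ is a pair $(\zz, \vv)$ with $\zz \in \ZZ_\dd$ and $\vv \in \VV_\dd = \GG(\zz)$ (the identification $\VV_\dd = \GG(y) = \GG(\zz)$ being exactly what Lemma \ref{image} and Proposition \ref{isomorphisms}(a) provide). Write $\vv = \DD_g(\zz)\cdot a$ for the unique $a \in \AAA^m$. The point $y = \chi_\dd(\zz,\vv)$ is, by unwinding $\chi_\dd = (\phi_\dd \circ \psi_\dd)^{-1}$, the unique vector in $\SS_\dd$ whose Weierstrass remainder is $\zz$ and whose quotient part is $\DD_g(\zz)\cdot r_\zz^{-1}(a)$, where $r_\zz(a) = a + p(\zz,a)$. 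Equivalently $y = \zz + \DD_g(\zz)\cdot b$ where $b$ is characterised by $b + p(\zz,b) = a$, i.e.\ $a = b + p(\zz,b)$.

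The key computation is then the following. Apply condition (ii) of Definition \ref{divisionmoduledef} with the vector $b$ in place of $a$:
$$f(\zz + \DD_g(\zz)\cdot b) = f(\zz) + \6_\y f(\zz)\cdot \DD_g(\zz)\cdot b + \6_\y f(\zz)\cdot \DD_g(\zz)\cdot p(\zz,b).$$
The last two terms combine into $\6_\y f(\zz)\cdot \DD_g(\zz)\cdot (b + p(\zz,b)) = \6_\y f(\zz)\cdot \DD_g(\zz)\cdot a = \6_\y f(\zz)\cdot \vv$. Hence
$$(\ff \circ \chi_\dd)(\zz,\vv) = f(y) = f(\zz + \DD_g(\zz)\cdot b) = f(\zz) + \6_\y f(\zz)\cdot \vv,$$
which is precisely the asserted form, linear in $\vv$. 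So the entire proof is: (1) recall from Proposition \ref{isomorphisms} that $\chi_\dd$ is a well-defined textile isomorphism and spell out $\chi_\dd(\zz,\vv) = \zz + \DD_g(\zz)\cdot b$ with $a = b + p(\zz,b)$; (2) invoke the division-module identity (ii); (3) perform the one-line cancellation $b + p(\zz,b) = a$.

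I would also note in passing that the first theorem statement (the one about $\SS$ \arquile open dense in $\AAA^m$ with minor $g$) follows by combining this general case with Proposition \ref{divisionmodule}: choosing the division module $\GG = \langle g^{o_1}\rangle \times \cdots \times \langle g^{o_m}\rangle$ with $o_i = 1$ for $i \le \kk$ and $o_i = 2$ for $i > \kk$ (which satisfies $o_i + o_j \ge o_\ell + 1$), one takes $\SS = \{y : g(y) \ne 0\}$, which is \arquile open and, since $g \not\equiv 0$, dense; the single order parameter $d = \ord g(y)$ determines $\dd$ (namely $d_i = d\,o_i$), giving the partition $\SS = \bigsqcup_d \SS_d$ and the isomorphisms $\chi_d$.

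The only real subtlety — and the one place I would be careful rather than glib — is the bookkeeping of \emph{which} variables $p$ sees: in Definition \ref{divisionmoduledef}(ii) the vector $p$ lies in $\kkk[[\t,\z,\aa]]^m$, but in Proposition \ref{divisionmodule} the constructed $p$ has $\kk$ components (it absorbs the order-$\ge 2$ tail $q$ of $f$), so I must make sure the ambient statement of Theorem \ref{generallinearization} uses $p$ of the right length and that $r_\zz(a) = a + p(\zz,a)$ is genuinely an \arquile automorphism of $\AAA^m$ as asserted in Proposition \ref{isomorphisms}(b). Given that both of those facts are already established upstream, there is no genuine obstacle here; the proof is essentially a substitution, and the main ``difficulty'' is purely notational — keeping $a$, $b$, $\vv$, $\DD_g(\zz)$ and the order-$\ge 2$ remainder straight while chaining $\psi_\dd$, $\phi_\dd$ and their inverses.
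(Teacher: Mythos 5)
Your proposal is correct and follows essentially the same route as the paper: the whole content is the Taylor identity of Definition \ref{divisionmoduledef}(ii) plus the cancellation $b+p(\zz,b)=a$. The paper merely organizes the bookkeeping in the other direction — it computes $\ff\circ\psi_\dd^{-1}$ first, obtaining $f(\zz)+\6_\y f(\zz)\cdot\phi_\dd^2(\zz,\vv)$, and then composes with $\phi_\dd^{-1}$, whereas you unwind $\chi_\dd$ explicitly via $r_\zz^{-1}$; these are the same argument.
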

%
%-------------------------------------------------
%         REMARKS
%-------------------------------------------------

\begin{rmks}\label{spaces} (a) A typical instance of this situation is the case where the division module $\GG$ of $f$ has the form $\GG=\langle g^{o_1}\rangle\times\ldots\times\langle g^{o_m}\rangle\subset\BBB^m$ for $g$ a suitable $(\kk\times\kk)$-minor of $\6_\y f$ and integers $o_i$ as in Proposition \ref{divisionmodule}. In this case, one would set $\dd=(o_1\cdot d,...,o_m\cdot d)$ for $d\in\N$ and then define $\SS_d=\SS_\dd$, $\VV_d=\VV_\dd$ accordingly as at the beginning of the section. This will allow us to see the statement of the first theorem as a special case of the second one.\medskip

%-------------------------------------------------

(b) Notice that $\ff$ is linearized with respect to $\vv$ only on the strata $\SS_\dd$, but not on whole $\AAA^m$. Again, one should view $\SS_\dd$ here as a bundle over $\ZZ_\dd$ with fiber $\VV_\dd$, the linearization taking place on the second factor $\VV_\dd$, and depending on the parameters $\zz$ in $\ZZ_\dd$.\medskip

%-------------------------------------------------

(c) The map $\chi_\dd=(\phi_\dd\circ\psi_\dd)^{-1}=\psi_\dd^{-1}\circ\phi_\dd^{-1}:\ZZ_\dd\times\VV_\dd\map\SS_\dd$ is ``almost'' given by substitution: The only drawback is that for $\phi_\dd^{-1}$ we first have to divide the second component $\vv$ of an element $(\zz,\vv)$ of $\ZZ_\dd\times\VV_\dd$ by $\DD_g(\zz)$ in order to be then able to invert $a\map a+p(\zz,a)$. The second map $\psi_\dd^{-1}$ is just the addition $(\zz,\vv)\map\vv+\zz$.\medskip

%-------------------------------------------------

(d) The statement of the theorem is a variant of the constant rank theorem for \arquile maps between power series spaces, see \cite{HM, BH}.
\end{rmks}

%-------------------------------------------------
%           PROOF OF LINEARIZATION THEOREM GENERAL CASE
%-------------------------------------------------

\begin{proof} We shall prove the stronger statement as given in Theorem \ref{generallinearization}. The assertions of Theorem \ref{linearization} then follow by taking for $f$ the division module $\GG=\langle g^{o_1}\rangle\times\ldots\times\langle g^{o_m}\rangle\subset\BBB^m$ as in Proposition \ref{divisionmodule}. We expand $f(y)=f(\vv+\zz)$ for $\zz\in\ZZ_\dd$ and $\vv=\DD(\zz)\cdot a\in\VV_\dd$ as follows, using condition (ii) of def.~\ref{divisionmoduledef}, 
$$(\ff\circ\psi_\dd^{-1})(\zz,\vv)=f(\vv+\zz)=f(\zz)+\6_\y f(\zz)\cdot \DD(\zz)\cdot a+\6_{\y}f(\zz)\cdot \DD_g(\zz)\cdot p(\zz,a)$$
$$ =f(\zz)+   \6_{\y}f(\zz)\cdot \DD_g(\zz)\cdot (a+p(\zz,a))$$
$$=f(\zz)+\6_{\y}f(\zz)\cdot \phi_\dd^2(\zz,\vv),$$
with $p$ and $\phi_\dd=(\phi_\dd^1,\phi_\dd^2)$ as before.
This gives
$$(f\circ\chi_\dd)(\zz,\vv)=(\ff\circ\psi_\dd^{-1}\circ\phi_\dd^{-1})(\zz,\vv)=f(\zz)+ \6_{\y}f(\zz)\cdot\vv$$
as required. The compatibility of all constructions under restriction to the convergent and algebraic setting is evident from the construction of the maps $\psi_\dd$ and $\phi_\dd$ in Proposition \ref{isomorphisms}. This proves the theorem.
\end{proof}
%-------------------------------------------------
%            REMARK ARTIN APPROXIMATION
%-------------------------------------------------

\begin{rmk} \label{proofartin} Let us briefly sketch how the linearization theorem provides an alternative proof of the Artin approximation theorem in the univariate case as was claimed in Corollary \ref{artin}. The details are explained in section 9 of \cite{Ha2}. We first treat the classical version as in part (a) of the corollary. So let $\wh y=\wh y(\t)\in\k[[\t]]^m$ be a formal solution of $f(\t,\y)=0$. We may enlarge this system of equations such that the components $f_i$ of $f$ generate the prime ideal $\P_{\wh y}$ of all convergent, respectively algebraic series $h$ vanishing at $\wh y$. This implies that there is a minor $g$ of the relative Jacobian matrix $\6_\y f$ of $f$ with respect to $\y$ of size the height of $\P_{\wh y}$ which does not vanish at $\wh y$.%
\footnote{ For the existence of such a minor one uses the equation $f(\t,\wh y(\t))=0$ and a reasoning as in the proof of Theorem \ref{singularlocus}.}
So we may assume that $\wh y$ is an \arqregular point of the \arquile variety $\YY\subset \k[[\t]]^m$ defined by $f$. Let $d$ be the order of $g(\t,\wh y(\t))$. Now apply the linearization theorem to the \arquile map $f_\infty: \k[[\t]]^m\map \kkk[[\t]]^\kk$ locally at $\wh y$. As $f$ is convergent, respectively algebraic, the trivializing textile isomorphism $\chi_d$ for $f_\infty$ will send the subspaces of convergent, respectively algebraic power series into themselves. After application of this isomorphism we may assume that $f$ is $\k[[\t]]$-linear up to a subspace of finite dimension. Now the density assertion follows from the flatness of $\k[[\t]]$ over $\k\{\t\}$, respectively $\k\langle\t\rangle$.%
\footnote{See Thm.~7.6 in \cite{Ma}. One may also invoke here the Artin-Rees lemma.}
\medskip

Let us now turn to the second part (b) of Corollary \ref{artin}. Let $I$ be the ideal of $\kkk\{\t,\y\}$, respectively $\kkk\langle\t,\y\rangle$ generated by the components of $f$. Let $J$ be the largest ideal of these rings which contains $I$ and which admits approximate solutions $\ol y$ up to any degree for all its elements. This is a prime ideal. We may then assume from the beginning that the components of $f$ generate this ideal, so that $I=J$. Let $r$ be its height. There then exists an $(r\times r)$-minor $g$ of $\6_\y f$ of $f$ which does not belong to $I$.%
\footnote{ The existence of such a minor along the lines of the proof of Theorem \ref{singularlocus} requires now to replace the equation $f(\t,\wh y(\t))=0$ by the vanishing of $f(\t,\ol y(\t))$ up to sufficiently high order.}
 Let $\ol y=\ol y(\t)$ be a $\t$-adically sufficiently good approximate solution of $f(\t,\y)=0$. Then $g(\t,\ol y(\t))$ does not vanish. Let $d$ be its order. We may now apply the linearization theorem to $f_\infty$ at $\ol y$. The trivializing textile isomorphism $\chi_d$ for $f_\infty$ will be a $\t$-adic homeomorphism. After application of this isomorphim we may assume that $f_\infty$ is $\kkk[[\t]]$-linear up to a subspace of finite dimension. Now the existence of a lifting of $\ol y$ follows for instance from the Weierstrass division theorem for formal power series, or the faithful flatness of $\k[[\t]]$ over $\k\{\t\}$ and $\k\langle\t\rangle$, see Thm.~7.6 in \cite{Ma}.\end{rmk}

%-------------------------------------------------
%           FIBRATION OF SUBVARITIES
%-------------------------------------------------

\section{Fibration theorem}

As an immediate consequence of the strata-wise linearization of \arquile maps $\ff:\AAA^m\map\AAA^\kk$ we obtain the following description of the geometry of \arquile subvarieties $\YY(f)$ of $\AAA^m$.

%-------------------------------------------------
%               FIBRATION THEOREM
%-------------------------------------------------

\begin{theorem} [Fibration of arquile varieties] \label{fibration} Let $f\in\BB^\kk$ be a power series vector with $\kk\leq m$ and assume that there is a $(\kk\times\kk)$-minor $g$ of the relative Jacobian matrix $\6_\y f$ of $f$ which is not identically zero. Let the sets $\SS_d$, $\VV_d$, $\RR_d$ and $\ZZ_d$ for $d\in\N$ be defined as at the beginning of section \ref{section_linearization}. Let
%xxx
$$\YY=\YY(f)=\{y\in\AAA^m,\, f(y)=0\}$$ 
be the zeroset of $f$ in $\AAA^m$, set $\YY_d=\YY_d(f)=\YY(f)\cap \SS_d$, and let $\pi_d:\SS_d \map \ZZ_d,\, y\map \zz$ be the projection map given by division as in Lemma \ref{image}. Introduce the sets
$$\WW_d=\{(\zz,\vv)\in\ZZ_d\times\VV_d,\, f(\zz)+\6_\y f(\zz)\cdot \vv=0\},$$
$$\ZZ_d^*=\{\zz\in\ZZ_d,\,f(z)\in \6_{\y}f(\zz)\cdot \VV_d\}.$$
%
%-------------------------------------------------

{\rm (a)} The projection $\tau_d:\WW_d\map \ZZ_d^*,\, (\zz,\vv)\map \zz,$ on the first factor defines a linear fibration of $\WW_d$ over $\ZZ_d^*$ whose fibers $\WW_{d,\zz}=\tau_d^{-1}(\zz)$ are the affine $\AAA$-submodules of $\AAA^m$ of vectors $\vv\in\VV_d$ satisfying the equation 
$$f(\zz)+\6_\y f(\zz)\cdot \vv=0.$$

%-------------------------------------------------

{\rm (b)} The map $\chi_d^{-1}=\phi_d\circ\psi_d: \SS_d\map \ZZ_d\times\VV_d$ induces by restriction to $\YY_d$ a textile isomorphism $\xi_d=(\phi_d\circ\psi_d)_{\vert \YY_d}:\YY_d\map\WW_d$ over $\ZZ_d^*$,  
%
%-------------------------------------------------
%                 COMMUTATIVE  DIAGRAM FROM    GR
%-------------------------------------------------
%
$$\xymatrix{ \YY_d \ar[rr]^{\xi_d} \ar[rdd]_{\pi_d} & & \WW_d \ar[ldd]^{\tau_d}\\
& & \\
 & \ZZ_d^*  & }$$

%-------------------------------------------------

{\rm (c)} The set
$$\WW_d=\{(\zz,\vv)\in\ZZ_d\times\VV_d,\, f(\zz)+\6_\y f(\zz)\cdot \vv=0\},$$
is arquile closed and the set
$$\ZZ_d^*=\{\zz\in\ZZ_d,\,f(z)\in \6_{\y}f(\zz)\cdot \VV_d\}$$
is Zariski closed in $\ZZ_d$ and hence a quasi-affine subvariety of some finite dimensional affine space $\A_\kkk^\ell$.
\end{theorem}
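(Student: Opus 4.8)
The plan is to derive all three assertions directly from the linearization machinery already established, in particular from Theorem~\ref{generallinearization} (or its special case Theorem~\ref{linearization}) and Proposition~\ref{isomorphisms}. Recall that on the stratum $\SS_d$ the composition $\ff\circ\chi_d$ is \emph{exactly} the affine-linear map $(\zz,\vv)\mapsto f(\zz)+\6_\y f(\zz)\cdot\vv$. Since $\chi_d:\ZZ_d\times\VV_d\map\SS_d$ is a textile isomorphism over $\ZZ_d$, the zeroset $\YY_d=\YY(f)\cap\SS_d$ pulls back under $\chi_d^{-1}=\phi_d\circ\psi_d$ precisely to $\{(\zz,\vv)\in\ZZ_d\times\VV_d,\ f(\zz)+\6_\y f(\zz)\cdot\vv=0\}=\WW_d$. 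This immediately gives assertion (b): $\xi_d:=(\phi_d\circ\psi_d)_{|\YY_d}$ is a textile isomorphism onto $\WW_d$, and its compatibility with the projections follows because $\psi_d$ and $\phi_d$ both act as the identity on the $\ZZ_d$-component, so $\tau_d\circ\xi_d$ equals the $\zz$-component of $\chi_d^{-1}$ restricted to $\YY_d$, which is $\pi_d$.

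For assertion (a), I would observe that the defining equation of $\WW_d$, namely $f(\zz)+\6_\y f(\zz)\cdot\vv=0$, is for each fixed $\zz\in\ZZ_d$ an $\AAA$-linear (inhomogeneous) system in the variable $\vv\in\VV_d$. Hence the fiber $\WW_{d,\zz}=\tau_d^{-1}(\zz)$ is either empty or an affine $\AAA$-submodule (a coset of the kernel $\{\vv\in\VV_d,\ \6_\y f(\zz)\cdot\vv=0\}$). The fiber is nonempty precisely when $f(\zz)$ lies in the image $\6_\y f(\zz)\cdot\VV_d$, which is exactly the condition defining $\ZZ_d^*$; therefore $\tau_d$ has nonempty fibers over $\ZZ_d^*$ and its image is $\ZZ_d^*$. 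This is the ``linear fibration'' statement. The main content here is just recognizing the inhomogeneous linear structure, which is transparent.

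For assertion (c), that $\WW_d$ is arquile closed inside $\ZZ_d\times\VV_d\subset\AAA^m\times\AAA^m$ is clear since it is cut out by the vanishing of the arquile map $(\zz,\vv)\mapsto f(\zz)+\6_\y f(\zz)\cdot\vv$ (the components of $f$ and of $\6_\y f$ are power series of the appropriate quality, so substitution yields an arquile map; the linear term in $\vv$ does not spoil this). The more substantial point is that $\ZZ_d^*$ is \emph{Zariski} closed in $\ZZ_d$, i.e.\ cofinite: one must show that the condition ``$f(\zz)\in\6_\y f(\zz)\cdot\VV_d$'' depends only on finitely many coefficients of $\zz$ and is polynomial in them. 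The key observation is that $\zz\in\ZZ_d$ is already a \emph{polynomial} vector of bounded degree, so it is determined by finitely many coordinates; moreover, since $\zz\in\SS_d$, the minor $g(\zz)$ has a fixed order, so the entries of $\6_\y f(\zz)$ have controlled orders and $\6_\y f(\zz)\cdot\VV_d$ is an $\AAA$-submodule of $\AAA^\kk$ whose ``leading part'' is governed by finitely many coefficients of $\zz$; the membership of $f(\zz)$ in it reduces to finitely many polynomial conditions on the low-degree coefficients of $\zz$ (by Weierstrass division of $f(\zz)$ by the relevant powers of $g(\zz)\sim\t^d$, using Theorem~\ref{weierstrass}). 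Once cofiniteness is verified, $\ZZ_d^*$ is a Zariski locally closed subset of the finite-dimensional affine space $\RR_d\cong\A_\kkk^{d_1+\dots+d_m}$, hence a quasi-affine variety.

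The step I expect to be the genuine obstacle is precisely this last verification that $\ZZ_d^*$ is cut out by \emph{finitely many} polynomial equations in finitely many coordinates of $\zz$. The subtlety is that $\6_\y f(\zz)\cdot\VV_d$ is an infinite-rank phenomenon a priori, and one must argue that the solvability of $f(\zz)+\6_\y f(\zz)\cdot\vv=0$ for $\vv\in\VV_d$ is controlled by finite data. I would handle this by exploiting that $g=\det\6_{\y^1}f$ (up to permutation) and the adjugate identity $\6_{\y^1}f\cdot\6^*_{\y^1}f=g\cdot\1_\kk$ from the proof of Proposition~\ref{divisionmodule}: multiplying the equation by $\6^*_{\y^1}f(\zz)$ converts solvability into a divisibility condition of $g(\zz)^{o_i}\cdot(\text{something})$ by appropriate powers of $g(\zz)\sim\t^d$, which, after Weierstrass division, amounts to the vanishing of a bounded number of low-degree Taylor coefficients — and these are polynomial in the finitely many coefficients of $\zz$. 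Conversely, solvability of the divisibility system yields an actual $\vv\in\VV_d$ by the division theorem. Assembling this equivalence carefully is the technical heart of assertion (c); assertions (a) and (b) are essentially formal consequences of the linearization theorem.
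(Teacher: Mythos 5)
Your proposal is correct and follows essentially the same route as the paper: (a) and (b) are read off directly from the form of the linearized map $\ff\circ\chi_d=(\zz,\vv)\mapsto f(\zz)+\6_\y f(\zz)\cdot\vv$ given by Theorem~\ref{linearization}, and (c) is handled by multiplying with the adjugate $\6^*_{\y^1}f(\zz)$ and applying Weierstrass division to reduce the membership $f(\zz)\in\6_\y f(\zz)\cdot\VV_d$ to finitely many polynomial conditions on the coefficients of the bounded-degree polynomial vector $\zz$. The one place where your sketch is slightly looser than the paper is in (c): the paper explicitly passes to the finite-dimensional quotient $\AAA^\kk/g(\zz)\cdot\VV_d^1$ and observes that the contribution of $\6_{\y^1}^*f\cdot\6_{\y^2}f\cdot\VV_d^2$ there is what makes the membership a Zariski-closed rather than merely constructible condition — with the standard choice $o_1=\dots=o_\kk=1,\,o_{\kk+1}=\dots=o_m=2$ this contribution vanishes and one is indeed left with a pure divisibility (vanishing of the first $\sim 2d$ Taylor coefficients), which is what your argument implicitly relies on and is consistent with the paper.
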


%-------------------------------------------------
%             PROOF  FIBRATION THEOREM
%-------------------------------------------------

\begin{proof} Assertion (a) is immediate from the definition of $\WW_d$ and $\ZZ_d^*$, and (b) follows from the linearization theorem \ref{linearization}. Let us prove assertion (c). It is clear that $\WW_d$ is arquile closed. As for $\ZZ_d$, recall first that $\ZZ_d=\SS_d\cap\RR_d=\{\zz\in\RR_d,\, g(\zz)\neq 0$ and $\ord\, g(\zz)=d\}$ is a Zariski locally closed subvariety of the finite dimensional $\kkk$-vector space $\RR_d=\k[\t]_{\leq d}^m$. We have $\GG= \langle g^{o_1}\rangle\times \ldots\times \langle g^{o_m}\rangle$ and $\VV_d=\GG(\zz)=\langle \t^{o_1\cdot d}\rangle\times \ldots\times \langle \t^{o_m\cdot d}\rangle$ for $\zz\in \SS_d$. We may assume that $g$ is the $(\kk\times\kk)$-minor defined by the first $\kk$ columns of $\6_\y f$. Let $\y^1=(\y_1,...,\y_\kk)$ and $\y^2=(\y_{\kk+1},...,\y_m)$ denote the first $\kk$, respectively last $m-\kk$, components of $\y$, and write accordingly $\VV_d=\VV_d^1\times\VV_d^2\subset\AAA^\kk\times\AAA^{m-\kk}$. Let $\6_{\y^1}^*f$ be the adjoint matrix of the $(\kk\times \kk)$-matrix $\6_{\y^1}f$. Then $\6_{\y^1}^*f\cdot \6_{\y^1}f=g\cdot\1_\kk$. This implies that the module 
$$\6_{\y}f(\zz)\cdot \VV_d=\6_{\y^1}f(\zz)\cdot \VV_d^1+\6_{\y^2}f(\zz)\cdot \VV_d^2\subset \AAA^\kk$$
contains the submodule $g(\zz)\cdot\VV_d^1$. Observe that this submodule has finite codimension in $\AAA^\kk$ as a $\kkk$-vector space, since 
$$g(\zz)\cdot \VV_d=\langle g(\zz)^{o_1+1}\rangle\times \ldots\times \langle g(\zz)^{o_m+1}\rangle$$
equals for $\zz\in \ZZ_d$ the $\AAA$-submodule 
$$\langle \t^{(o_1+1)d}\rangle\times \ldots\times \langle \t^{(o_m+1)d}\rangle$$
of $\AAA^m$. Now, for a vector $\zz\in\ZZ_d$, the condition $f(\zz)\in \6_\y f(\zz)\cdot \VV_d$ defining $\ZZ_d^*$ in $\ZZ_d$ is equivalent to the condition 
$$\6_{\y^1}^*f(\zz)\cdot f(\zz)\in \6_{\y^1}^*f(\zz)\cdot \6_{\y}f(\zz)\cdot \VV_d$$
since, because of $g(\zz)\neq 0$, the linear map $\AAA^\kk\map \AAA^\kk$ induced by $\6_{\y^1}^*f(\zz)$ is injective. From $\VV_d=\VV_d^1\times\VV_d^2$ it follows 
$$\6_{\y^1}^*f\cdot \6_{\y}f\cdot \VV_d=\6_{\y^1}^*f\cdot \6_{\y^1}f\cdot \VV_d^1 +\6_{\y^1}^*f\cdot \6_{\y^2}f\cdot \VV_d^2=g\cdot \VV_d^1+\6_{\y^1}^*f\cdot \6_{\y^2}f\cdot \VV_d^2.$$
Therefore the membership $f(\zz)\in \6_{\y}f(\zz)\cdot \VV_d$ is equivalent to saying that the remainder of the componentswise division of $\6_{\y^1}^*f(\zz)\cdot f(\zz)$ by $g(\zz)\cdot \VV_d^1$ belongs to the image of $\6_{\y^1}^*f(\zz)\cdot \6_{\y^2}f(\zz)\cdot \VV_d^2$ in the factor module $\AAA^\kk/g(\zz)\cdot \VV_d^1$. But this quotient is a finite dimensional $\kkk$-vector space, so the membership defines by Theorem \ref{division} (b) a Zariski closed subset $\ZZ_d^*$ of $\ZZ_d$.
\end{proof}

%-------------------------------------------------
%                  REMARKS
%-------------------------------------------------

\begin{rmks} (a) It is in general not true that the fibrations $\tau_d:\WW_d\map\ZZ_d^*$ and $\pi_d:\YY_d\map\ZZ_d^*$ are trivial or \ttextile locally trivial. In fact, the dimension of the fibers $\WW_{d,\zz}$ may vary with $\zz$, since the equation $f(\zz)+\6_yf(\zz)\cdot v=0$ defining $\WW_{d,\zz}$ has to be solved for $v=v(\t)$ inside $\AAA^m$ (recall here that the dependence of the series on the variable $\t$ is not marked notationally). This is only possible for those $\zz$ for which $f(\zz)$ belongs to the $\AAA$-module generated by the vectors $\6_{\y_i}f(\zz)$.

The statement of the fibration theorem holds under the more general assumption that $f$ admits an arbitrary division module $\GG$ (and not just one of the form described in Proposition \ref{divisionmodule} and remark \ref{spaces}(a)). The sets $\SS_\dd$, $\VV_\dd$, $\RR_\dd$ and $\ZZ_\dd$, with $\dd=(d_1,...,d_m)\in\N^m$, have then to be defined as in the linearization theorem \ref{generallinearization}.
In the factorization theorem \ref{factorization} below it will be shown that there exists a choice of a division module $\GG$ of $f$ such that the fibrations $\tau_d$ and $\pi_d$ actually become trivial.\medskip

%-------------------------------------------------

(b) The quasi-affine variety $\ZZ_d^*$ appears implicitly in the proof of the approximation theorem \cite{Ar1, Pl}: it corresponds to the set of approximate solutions of the equation $f(\y)=0$ modulo the square of the minor $g$.\medskip

%-------------------------------------------------

(c)  The theorem gives no statement about the complement $\YY_\infty=\YY_\infty(f)=\YY(f)\sm \bigcup_{d\in\N} \YY_d(f)$ of power series vectors $y$ where the minor $g$ vanishes. To describe this set one will have to apply induction on the height of the ideal $I$ generated by $f_1,...,f_\kk$, see Theorem \ref{partition} for the details.\medskip

%-------------------------------------------------

(d) In the situation of arc spaces (i.e., when $f$ does not depend on $\t$), the sets $\YY_d$ are also known as the {\it contact locus}, cf. \cite{dFEI}. 
\end{rmks}

%-------------------------------------------------
%              CARTESIAN FACTORIZATION 
%-------------------------------------------------

\section{Factorization theorem}

In this and the next section we will prove that every subvariety 
$$\YY=\YY(I)=\{y\in\AAA^m,\, f(y)=0 \text{ for all } f\in I\}$$
defined by some ideal $I$ of $\BB$ admits a countable stratification into \ttextile locally closed strata which are textile isomorphic to cartesian products of finite dimensional varieties with finite free $\AAA$-modules as stated in Theorem \ref{structuretheorem} from the introduction. Let $f_1,...,f_\kk$ be generators of $I$, and set $f=(f_1,...,f_\kk)\in\BB^\kk$.\medskip

The fibration theorem \ref{fibration} provides in the case $\kk\leq m$ and for a chosen $(\kk\times \kk)$-minor $g$ of $\6_\y f$ a stratification of $\YY(f)\sm \YY(g)$ by strata $\YY_d(f)$ which are linearly fibered. In general, these are not trivial fibrations. We will show in this section that for a smart choice of the division module $\GG$ of $f$ the fibrations will indeed be trivial, i.e., each $\YY_d(f)$ is textile isomorphic to a cartesian product. This choice of $\GG$ has been used for instance by P\l oski for proving his parametrization theorem \cite{Pl}, and also by many other authors, see e.g.~\cite{DL, GK, Dr}. The precise statement is as follows.

%-------------------------------------------------
%             CARTESIAN  FACTORIZATION THEOREM
%-------------------------------------------------

\begin{theorem} [Cartesian factorization of \arquile varieties] \label{factorization} Let $I\subset \BB$ be an ideal generated by series $f_1,...,f_\kk$ in $\t$ and $\y$ for which there exists a $(\kk\times \kk)$-minor $g$ of the relative Jacobian matrix $\6_{\y}f$ of $f=(f_1,...,f_\kk)$ which does not vanish identically. Let 
$$\YY=\YY(I)=\{y\in\AAA^m,\, f(y)=0\}\subset\AAA^m$$
be the \arquile variety defined by $I$ in $\AAA^m$. Write $\YY$ as the disjoint union 
$$\YY=\bigcup_{d\in\N\cup\{\infty\}} \YY_d,$$
of \ttextile locally closed subsets, where
$$\YY_d=\{y\in\YY,\, g(y)\neq 0,\, {\rm ord}\, g(y)=d\}$$
for $d\in\N$, and where
$$\YY_\infty=\{y\in\YY,\, g(y)=0\}.$$
%
%-------------------------------------------------

Then, for $d\in\N$, the strata $\YY_d$ are textile isomorphic (and hence also $\t$-adically homeomorphic) over a Zariski locally closed subset $\ZZ_d^*$ of $\A_\kkk^{d(2m-\kk)}$ to the cartesian product of $\ZZ_d^*$ with a free $\AAA$-module of rank $m-\kk$,
%
%$$\Phi_d:\YY_d\map\ZZ_d^*\times \AAA^{m-\kk}.$$
%

%$$\xymatrix{ \Phi_d:\YY_d \,\,\ar[rr] \ar[rd] & & \ZZ_d^*\times \AAA^{m-\kk} \ar[ld] & & \\ & \ZZ_d^*  & }$$

$$\xymatrix{ \YY_d \ar[rr]^{\Phi_d} \ar[rdd]_{\pi_d} & & \ZZ_d^*\times \AAA^{m-\kk} \ar[ldd]\\
& & \\
 &\ZZ_d^*  & }$$

\end{theorem}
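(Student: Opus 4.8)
The goal is to promote the linear fibration $\tau_d:\WW_d\map\ZZ_d^*$ of the fibration theorem \ref{fibration} into a trivial one, i.e.~to produce a textile isomorphism $\WW_d\isom\ZZ_d^*\times\AAA^{m-\kk}$ over $\ZZ_d^*$, and then to transport this back to $\YY_d$ via the textile isomorphism $\xi_d=(\phi_d\circ\psi_d)_{|\YY_d}:\YY_d\mapname{\isom}\WW_d$ over $\ZZ_d^*$ already established in part (b) of that theorem. The composite $\Phi_d$ will then be the asserted isomorphism, and since $\xi_d$ and the trivialization are built from division maps and \arquile maps, $\Phi_d$ is textile and hence a $\t$-adic homeomorphism. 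So everything reduces to analyzing the affine $\AAA$-module $\WW_{d,\zz}=\{\vv\in\VV_d,\, f(\zz)+\6_\y f(\zz)\cdot\vv=0\}$ and showing it varies ``flatly'' in $\zz\in\ZZ_d^*$ once the division module $\GG$ is chosen cleverly.

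\textbf{Key steps.} First I would fix the division module to be $\GG=\langle g\rangle^{\kk}\times\langle g^2\rangle^{m-\kk}$, as announced in the remark after Proposition \ref{divisionmodule} ($o_i=1$ for $i\le\kk$, $o_i=2$ for $i>\kk$), which by that proposition is indeed a division module for $f$; set $\dd=(d,\dots,d,2d,\dots,2d)$ and $\SS_d=\SS_\dd$, $\VV_d=\VV_\dd=\langle\t^d\rangle^\kk\times\langle\t^{2d}\rangle^{m-\kk}$, so $\RR_d=\k[\t]_{\le d}^\kk\times\k[\t]_{\le 2d}^{m-\kk}$ has dimension $d(2m-\kk)$, whence $\ZZ_d^*\subset\A_\kkk^{d(2m-\kk)}$. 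Second, split $\y=(\y^1,\y^2)$, $\VV_d=\VV_d^1\times\VV_d^2$ and $\vv=(\vv^1,\vv^2)$ accordingly, and solve the linear equation $f(\zz)+\6_{\y^1}f(\zz)\cdot\vv^1+\6_{\y^2}f(\zz)\cdot\vv^2=0$ for $\vv^1$ in terms of the free parameter $\vv^2\in\VV_d^2$: multiplying by the adjoint $\6_{\y^1}^*f(\zz)$ gives $g(\zz)\cdot\vv^1=-\6_{\y^1}^*f(\zz)\cdot\big(f(\zz)+\6_{\y^2}f(\zz)\cdot\vv^2\big)$, and because $\zz\in\ZZ_d^*$ the right-hand side lies, componentwise, in $\langle g(\zz)\rangle=\langle\t^d\rangle$, so division by $g(\zz)$ yields a unique $\vv^1\in\AAA^\kk$; one checks $\ord_\t$ of each component of $\vv^1$ is $\ge d$ so in fact $\vv^1\in\VV_d^1$, using that the right-hand side is divisible by $\t^{d+1}$ when $\zz\in\ZZ_d^*$ (here the choice $o_i=2$ on the $\y^2$-block is what makes $\6_{\y^2}f(\zz)\cdot\vv^2$ land in $\langle\t^{2d}\rangle$ and $f(\zz)$ in $\langle\t^{d}\rangle$ with the quotient still in $\VV_d$). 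Third, I would assemble the map $(\zz,\vv^2)\mapsto(\zz,\vv^1(\zz,\vv^2),\vv^2)$: it is rationally textile since division by $g(\zz)=\t^d\cdot(\text{unit})$ is, it is a section of $\tau_d$ linear in $\vv^2$, and it is a bijection onto $\WW_d$ with rationally textile inverse the projection $(\zz,\vv^1,\vv^2)\mapsto(\zz,\vv^2)$; denominators never vanish on $\ZZ_d^*$. This gives $\WW_d\isom\ZZ_d^*\times\VV_d^2\isom\ZZ_d^*\times\AAA^{m-\kk}$ over $\ZZ_d^*$, the last step being the $\AAA$-linear rescaling $\VV_d^2=\langle\t^{2d}\rangle^{m-\kk}\isom\AAA^{m-\kk}$; composing with $\xi_d^{-1}$ gives $\Phi_d$ and the commutative triangle with $\pi_d$.

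\textbf{Main obstacle.} The delicate point is not the algebra of solving for $\vv^1$ but verifying that $\vv^1(\zz,\vv^2)$ genuinely lands in $\VV_d^1$ (order $\ge d$, not merely $\ge$ something smaller) for \emph{every} $\zz\in\ZZ_d^*$ and every $\vv^2\in\VV_d^2$ — i.e.~that the defining condition $f(\zz)\in\6_\y f(\zz)\cdot\VV_d$ of $\ZZ_d^*$ really forces $\6_{\y^1}^*f(\zz)\cdot f(\zz)\in g(\zz)\cdot\VV_d^1$ with quotient of the right order, and symmetrically that no spurious denominators of order $>0$ sneak in. This is exactly where the inequality $o_i+o_j\ge o_\ell+1$ from Proposition \ref{divisionmodule} is used, now in the sharper bookkeeping of orders in $\t$ rather than mere ideal membership, and it is the reason the exponents $(1,\dots,1,2,\dots,2)$ are forced. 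I expect the bulk of the write-up to be this order-of-vanishing verification together with checking that all the maps in sight are (rationally) textile and compatible with passage to the convergent, algebraic, and $\k\{\t\}_s$ settings, the latter requiring the usual restriction to small radius $s$ and small $s$-norm as flagged in the remarks to Theorem \ref{structuretheorem}.
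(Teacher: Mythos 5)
Your proposal is correct and follows essentially the same route as the paper: fix the division module $g\cdot\BBB^{\kk}\times g^{2}\cdot\BBB^{m-\kk}$, multiply the linearized system from the fibration theorem by the adjoint $\6^{*}_{\y^1}f(\zz)$, observe that the $\ZZ_d^*$-condition is exactly $\6^{*}_{\y^1}f(\zz)\cdot f(\zz)\in\langle g(\zz)^2\rangle\cdot\AAA^{\kk}$ so that the first block of unknowns is uniquely determined and the second block is free, and compose with $\phi_d\circ\psi_d$. The paper phrases the solution step as a coordinate change $\lambda_{d,\zz}$ on $\VV_d$ rather than solving for $\vv^1$ as a function of $(\zz,\vv^2)$, but this is the same computation (and your parenthetical ``divisible by $\t^{d+1}$'' should read ``lies in $\t^{2d}\cdot\AAA^{\kk}$'', which is what you correctly require elsewhere).
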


%-------------------------------------------------
%           REMARKS
%-------------------------------------------------

\begin{rmks}\label{rmk_factorization}  (a) This result specifies the assertions of the structure theorem \ref{structuretheorem} from the introduction.\medskip

%-------------------------------------------------

(b) For $d\in \N$, the strata $\YY_d$ are \ttextile locally closed and $\t$-adically open in $\AAA^m$, and \ttextile cofinite locally closed in $\YY$.\medskip

%-------------------------------------------------

(c) The stratum $\YY_\infty$ is \arquile closed in $\YY$ and defined in $\AAA^m$ by the ideal $I+\langle g\rangle$. If $I$ is prime of height $r$ and $g\not \in I$, this ideal has height $r+1$ and can therefore be submitted to induction to find its respective stratification, see Theorem \ref{partition}. \medskip

%-------------------------------------------------

(d) The Zariski locally closed subset $\ZZ_d^*$ of $\A_\kkk^{d(2m-\kk)}$ is constructed as follows. We may assume that $g$ is the $(\kk\times \kk)$-minor defined by the first $\kk$ columns of $\6_\y f$. Consider the $\BBB$-submodule 
$$\GG=g\cdot\BBB^\kk\times g^2\cdot\BBB^{m-\kk}$$
of $\BBB^m$. Proposition \ref{divisionmodule} ensures that $\GG$ is a division module for $f$, cf. definition \ref{divisionmoduledef}. For $d\in\N$ set 
$$\SS_d=\{y\in \AAA^m,\, g(y)\neq 0,\, \text { ord}\, g(y)=d\}.$$
The evaluation $\GG(y)$ does not depend on the choice of $y\in\SS_d$ and equals
$$\VV_d=\GG(y)=\t^d\cdot\AAA^\kk\times \t^{2d}\cdot\AAA^{m-\kk}\subset \AAA^m.$$
We let $\RR_d\isom\A_\kkk^{d(2m-\kk)}$ denote the direct complement of $\VV_d$ as in Theorem \ref{linearization}, consisting of vectors in $\AAA^m$ whose first $\kk$ components are polynomials of degree $\leq d$ and whose last $m-\kk$ components are polynomials of degree $\leq 2d$. Set $\ZZ_d=\SS_d\cap\RR_d$ and define
$$\ZZ_d^*=\{z\in\ZZ_d,\,f(z)\in \6_{\y}f(z)\cdot \VV_d\}.$$
By assertion (c) of Theorem \ref{fibration}, this set is Zariski locally closed in affine space $\A_\kkk^{d(2m-\kk)}$. \medskip

%-------------------------------------------------

(e) A suitable isomorphism $\Phi_d:\YY_d\map \ZZ_d^*\times\AAA^{m-\kk}$ is given by the restriction to $\YY_d$ of the composition $\lambda_d\circ \phi_d\circ\psi_d$ of the map
$$\phi_d\circ\psi_d:\SS_d\map \ZZ_d\times \VV_d,$$
$$y=\vv+\zz=\DD(\zz)\cdot a+\zz\map (\zz,\DD_g(\zz)\cdot (a+p(\zz,a))),$$
defined by the diagonal matrix $\DD=\diag(g,...,g,g^2,...,g^2)$ and the power series vector $p$ as in Proposition \ref{isomorphisms} and Theorem \ref{linearization} with the map
$$\lambda_d: \ZZ_d\times\VV_d\map \ZZ_d\times \AAA^\kk\times\AAA^{m-\kk},$$
$$(\zz,\vv)=(g(\zz)\cdot a_1, g(\zz)^2\cdot a_2,\zz)\map (\zz, a_1-\6^*_{\y^1}f(\zz)\cdot \6_{\y^2}f(\zz) \cdot a_2,a_2),$$
where $a=(a_1,a_2)$ and $\y=(\y^1,\y^2)$ denote the decompositions of $a$ and $\y$ into the first $\kk$ and last $m-\kk$ components and where $\6^*_{\y^1}f$ is the adjoint matrix of $\6_{\y^1}f$. \medskip

%-------------------------------------------------

(f) As the minor $g$ varies, the sets $\YY_d$ will cover the entire \arqregular locus of $\YY$, see the next section.
\end{rmks}

%-------------------------------------------------
%           PROOF OF FACTORIZATION THEOREM
%-------------------------------------------------

\begin{proof} All constructions and arguments below will be compatible with the restrictions to the convergent or algebraic power series setting. The set $\YY_d$ equals the intersection $\YY\cap \SS_d$. In Theorem \ref{fibration} it was shown that $ \YY_d$ is textile isomorphic over $\ZZ_d^*$ to
$$\WW_d=\{(\zz,\vv)\in \ZZ_d\times\VV_d,\, f(\zz)+ \6_{\y}f(\zz)\cdot \vv=0\}$$
over $\ZZ_d^*$. The projection $\tau_d: \WW_d\map \ZZ_d^*,\, (\zz,\vv)\map\zz$ is by Theorem \ref{fibration} a fibration with fibers $\WW_{d,\zz}$ which are affine $\AAA$-modules. We will show that this fibration is trivial for our specific choice 
$$\GG=g \cdot\BBB^\kk\times g^2\cdot\BBB^{m-\kk}$$
of the division module $\GG$. To this end we will construct a textile isomorphism $\WW_d\map\ZZ_d^*\times \AAA^{m-\kk}$ over $\ZZ_d^*$.\medskip

%-------------------------------------------------

Set $\y^1=(\y_1,\ldots,\y_\kk)$ and $\y^2=(\y_{\kk+1},\ldots,\y_m)$. For $\zz\in\ZZ_d$, the evaluation $g(\zz)$ of the minor $g$ of the submatrix $\6_{\y^1}f(\zz)$ is non-zero, and therefore the linear map $\AAA^\kk\map \AAA^\kk$ induced by the adjoint matrix $\6^*_{\y^1}f$ of $\6_{\y^1}f$ is injective. By multiplying the system of equations $f(\zz)+\6_{\y}f(\zz)\cdot \vv=0$ from the left with $\6^*_{\y^1}f$ we hence obtain the equivalent system of equations 
$$\6^*_{\y^1}f(\zz)\cdot f(\zz)+\6^*_{\y^1}f(\zz)\cdot \6_{\y}f(\zz)\cdot v=0.$$
For $\zz\in\ZZ_d^*$ and $\vv\in\VV_d=\GG(\zz)$, write $v=(g(\zz)\cdot a_1,g(\zz)^2\cdot a_2)$ with $a_1\in\AAA^\kk$ and $a_2\in\AAA^{m-\kk}$. As $\6^*_{\y^1}f\cdot \6_{\y^1}f=g\cdot\1_\kk$,  the previous system can be rewritten as
$$\6^*_{\y^1}f(\zz)\cdot f(\zz)+g(\zz)^2 \cdot a_1+\6^*_{\y^1}f(\zz)\cdot \6_{\y^2}f(\zz)\cdot g(\zz)^2 \cdot a_2=0,$$
say,
$$\6^*_{\y^1}f(\zz)\cdot f(\zz)+ g(\zz)^2 \cdot [a_1+\6^*_{\y^1}f(\zz)\cdot \6_{\y^2}f(\zz) \cdot a_2]=0.$$
This shows that $\ZZ_d^*$ can equally be defined as
$$\ZZ_d^*=\{z\in\ZZ_d,\, \6^*_{\y^1}f(z)\cdot f(z)\in\langle g(\zz)^2\rangle\cdot\AAA^\kk\}.$$
%
%-------------------------------------------------
%
Consider now for each $\zz\in\ZZ_d^*$ the isomorphism
$$\lambda_{d,\zz}: \VV_d\map \AAA^\kk\times\AAA^{m-\kk},$$
$$\vv=(g(\zz)\cdot a_1,g(\zz)^2\cdot a_2)\map (a_1-\6^*_{\y^1}f(\zz)\cdot \6_{\y^2}f(\zz) \cdot a_2,  a_2).$$
The preceding computations show that the fibers $\WW_{d,\zz}$ are sent by $\lambda_{d,\zz}$ to the solutions $a=(a_1,a_2)\in\AAA^r \times \AAA^{m-\kk}$ of the system of equations
$$\6^*_{\y^1}f(\zz)\cdot f(\zz)+ g(\zz)\cdot g(\zz) \cdot a_1=0.$$
As $\6^*_{\y^1}f(z)\cdot f(z)\in g(\zz)\cdot g(\zz)\cdot\AAA^\kk$ for $\zz\in\ZZ_d^*$, these solutions are of the form 
$$(a_1,a_2)=(-g(\zz)^{-2}\cdot \6^*_{\y^1}f(\zz)\cdot f(\zz),a_2),$$
with arbitrary $a_2\in\AAA^{m-\kk}$. It follows that the map
$$\lambda_d: \WW_d\map \ZZ_d^*\times\AAA^{m-\kk},\, (\zz,\vv)\map(\zz,\lambda_{d,\zz}(\vv))$$
is a textile isomorphism over $\ZZ_d^*$. Composing $\lambda_d$ with $\phi_d\circ\psi_d:\SS_d\map \ZZ_d\map \VV_d$ as in Proposition \ref{isomorphisms} gives the required isomorphism $\Phi_d=\lambda_d\circ \phi_d\circ\psi_d$ for $\YY_d$. This concludes the proof of the factorization theorem.
\end{proof}

%-------------------------------------------------
%            PARTITION  SECTION
%-------------------------------------------------

\section{Partition of \arquile varieties} \label{partitions}

In this section we indicate how to decompose and stratify an arbitrary \arquile subvariety $\YY(I)$ of $\AAA^m$ such that on each stratum the factorization theorem \ref{factorization} can be applied. This, in turn, will then establish the structure theorem \ref{structuretheorem} for \arquile varieties from the introduction.\medskip

So let $I\subset \BB$ be an ideal, and let $\YY=\YY(I)$ be its associated zeroset in $\AAA^m$. Without loss of generality we may assume that $I$ is saturated, i.e., that $I$ equals the ideal $I_{\YY(I)}$ as defined in section 2.  In particular, $I$ will be radical.  Let
$$I=I_1\cap\ldots\cap I_s$$
be the irredundant prime decomposition of $I$, with $I_j\subset \BB$ prime. By Proposition \ref{primedecomposition}, all ideals $I_j$ are again saturated. We get
$$\YY(I)=\YY(I_1)\cup\ldots\cup \YY(I_s),$$
and each $\YY(I_j)$ is \arquile closed in $\YY$. %So we may assume from the beginning that $I$ is prime and saturated (this reduction is not mandatory; it has the advantage that $\BB/I$ has become equidimensional). 
Recall that $\Reg(\YY)$ denotes the \arqregular locus of points $y$ of $\YY$ for which $(\BB/I)_{\PP_y}$ is a regular local ring, where $\PP_y\subset\BB$ is the prime ideal of relations among the components of $y$. This locus is \arquile open in $\YY$. For $y\in\Reg(\YY)$, let $r_y$ be the height of the ideal $I\cdot \BB_{\PP_y}$. For $r\in\N$, denote by $\YY_r$ the \arquile locally closed subset of $\YY$ of points $y\in \Reg(\YY)$ with $r_y=r$. By part (c) of Theorem \ref{singularlocus}, each $\YY_r$ is covered by finitely many \arquile open subsets of the form $\YY(f)\sm\YY(g)$, where $f=(f_1,...,f_r)$ is a vector of elements in $I$ and $g$ is a suitable $(r\times r)$-minor of the relative Jacobian matrix $\6_\y f$ of $f$. Varying $r$ then yields \an \arquile and $\t$-adically open covering of $\Reg(\YY)$ by such sets. \medskip

By part (e) of Theorem \ref{singularlocus}, the \arqsingular locus $\Sing(\YY)$ is an \arquile closed proper subset of $\YY$. As $\AAA^m$ is Noetherian with respect to the \arquile topology, we may apply induction to stratify $\Sing(\YY)$ suitably, starting again with its \arqregular locus $\Reg(\Sing(\YY))$ and then passing on to its \arqsingular locus $\Sing(\Sing(\YY))$. Note that the strata will now be just \arquile locally closed in $\YY$. Putting all this together then gives

%[By part (d) of Theorem \ref{singularlocus} the set $\YY_r$ of vectors $y\in\Reg(\YY)$ for which the ideal $I\cdot \BB_{\PP_y}$ has minimal height $r$ is \arquile open in $\YY$. Its complement $\YY\sm\YY_r$ is \arquile closed in $\YY$ and consists of the points  $y\in\Reg(\YY)$ for which $I\cdot \BB_{\PP_y}$ has height $>r$, together with the points $y\in\Sing(\YY)$. These latter will also have defining ideals of height $>r$, by part (e) of Theorem \ref{singularlocus}. We may therefore apply to this complement induction on the height of the defining ideal and thus assume to know how to stratify it. \medskip

%We are thus left with the set $\YY_r$. By parts (a) and (c) of Theorem \ref{singularlocus}, $\YY_r$ is covered by finitely many \arquile open subsets of the form $\YY(f)\sm\YY(g)$, where $f=(f_1,...,f_r)$ is a vector of elements in $I$ and $g$ is a suitable $(r\times r)$-minor of the relative Jacobian matrix $\6_\y f$ of $f$.  

%-------------------------------------------------
%            PARTITION THEOREM
%-------------------------------------------------

\begin{theorem} [Partition of \arquile varieties] \label{partition} Every \arquile subvariety $\YY=\YY(I)$  of $\AAA^m$ admits a finite stratification $\YY=\bigsqcup\, \YY_i$ into \arquile locally closed subsets $\YY_i=\Sing_i(\YY)\sm\Sing_{i+1}(\YY)$ such that each $\YY_i$ has a finite \arquile and $\t$-adically open covering by sets of the form $\UU_g=\YY(f)\sm\YY(g)$, where $f\in\BB^\kk$ is a vector of power series and where $g$ is a minor of the relative Jacobian matrix $\6_\y f$ of $f$ of size equal to the height of the ideal of $\BB$ generated by the components of $f$.
\end{theorem}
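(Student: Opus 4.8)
The plan is to read the statement off Theorem~\ref{singularlocus} by iterating the passage to the \arqsingular locus, using the Noetherianity of the \arquile topology to control the iteration.

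\emph{Reduction and the filtration.} Since $\YY(I)=\YY(I_{\YY(I)})$, I would first replace $I$ by its saturation, so assume $I=I_\YY$; in particular $I$ is radical. Put $\Sing_0(\YY)=\YY$ and, inductively, $\Sing_{i+1}(\YY)=\Sing(\Sing_i(\YY))$, always regarding $\Sing_i(\YY)$ as the \arquile variety defined by the saturated ideal $J_i=I_{\Sing_i(\YY)}$ (passing to the saturation at each step changes no zeroset and is needed to keep Theorem~\ref{singularlocus} applicable). By assertion (e) of Theorem~\ref{singularlocus}, whenever $\Sing_i(\YY)\neq\emptyset$ the set $\Sing_{i+1}(\YY)$ is \arquile closed and a \emph{proper} subset of $\Sing_i(\YY)$, and the height of $J_{i+1}$ strictly exceeds that of $J_i$. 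As the \arquile topology is Noetherian, the descending chain $\Sing_0(\YY)\supseteq\Sing_1(\YY)\supseteq\cdots$ of \arquile closed sets stabilises; together with the strictness of the inclusions (equivalently, with the fact that heights in $\BB$ cannot exceed $m+1$) this forces $\Sing_N(\YY)=\emptyset$ for some $N$. Hence $\YY=\bigsqcup_{0\le i<N}\YY_i$ with $\YY_i=\Sing_i(\YY)\sm\Sing_{i+1}(\YY)$, a finite disjoint union of \arquile locally closed subsets.

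\emph{Covering the strata.} By construction $\YY_i=\Sing_i(\YY)\sm\Sing(\Sing_i(\YY))=\Reg(\Sing_i(\YY))$ is the \arqregular locus of the \arquile variety $\YY(J_i)$, and $J_i$ is saturated. Assertion (c) of Theorem~\ref{singularlocus}, applied to $\YY(J_i)$, therefore covers $\YY_i$ by finitely many \arquile open subsets of the shape $\UU_g=\YY(f)\sm\YY(g)$ as in assertion (b): here $f=(f_1,\dots,f_r)$ is a vector of elements of $J_i$ and $g$ is an $(r\times r)$-minor of the relative Jacobian matrix $\6_\y f$. By remark (c) following Theorem~\ref{singularlocus}, the chosen $f_1,\dots,f_r$ form a regular sequence, i.e.\ $\langle f_1,\dots,f_r\rangle$ has height exactly $r$; thus the size of the minor $g$ equals the height of the ideal generated by the components of $f$, as demanded. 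Finally, each $\UU_g$ is contained in $\Reg(\YY(J_i))=\YY_i$ (a point of $\UU_g$ is \arqregular by the Jacobian criterion, cf.\ the proof of Theorem~\ref{singularlocus}) and is \arquile open in $\YY(J_i)$, hence \arquile open in $\YY_i$; since the \arquile topology is coarser than the $\t$-adic one, $\UU_g$ is $\t$-adically open in $\YY_i$ as well. This gives the asserted finite \arquile and $\t$-adically open covering of each $\YY_i$.

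\emph{The main point.} There is no genuine obstacle here: the argument is bookkeeping built on Theorem~\ref{singularlocus}. The only thing requiring care is the finiteness of the filtration by iterated \arqsingular loci, which rests on assertion (e) --- that $\Sing(-)$ of a non-empty \arquile variety with saturated defining ideal is a \emph{proper} \arquile closed subset whose defining ideal has strictly larger height --- together with Noetherianity (or the bound $\le m+1$ on heights) to conclude that the chain terminates at $\emptyset$. The second, minor, point is to remember to re-saturate the defining ideal after each application of $\Sing(-)$ so that Theorem~\ref{singularlocus}(c) stays applicable; this is harmless, as it leaves all the relevant sets unchanged.
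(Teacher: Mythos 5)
Your proposal is correct and follows essentially the same route as the paper: saturate the defining ideal, apply Theorem~\ref{singularlocus}(c) to cover each \arqregular locus by finitely many sets $\YY(f)\sm\YY(g)$ (with the minor size equal to the height of $\langle f_1,\dots,f_r\rangle$ by the regular-sequence remark), and iterate on the \arqsingular locus, using assertion (e) together with the Noetherianity of the \arquile topology (or the height bound) to terminate the filtration. Your explicit remark about re-saturating the defining ideal at each step is a point the paper leaves implicit, but it changes nothing of substance.
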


%-------------------------------------------------
%             EXAMPLE
%-------------------------------------------------

\section{Example}

\begin{example} Let $\YY(f)\subset \AAA^3$ be the arquile variety given by $f(\y) = \y_{1}^{2} - \y_{2} \y_{3}$. Since the parameter $\t$ does not appear in the equation and since $f$ is irreducible, the singular locus equals $\Sing(\YY) = \YY(f, \partial_{\y_{1}}f,\partial_{\y_{2}}f,\partial_{\y_{3}}f)  =\{(0,0,0)\}$ and the \arqregular part is covered by $\YY(f) \setminus \YY(\partial_{\y_{i}}f)$. 
We fix $i=1$, so we consider the set $\YY(f)\setminus \YY(\y_{1})$, which we stratify into the sets $(d\geq 1)$
$$\YY_{d} = \{y \in \YY(f),\, \ord\,  y_{1} = d\}.$$
%-------------------------------------------------

It is easily seen that 
$$\ZZ_{d} = \{ (z_{1,d} \t^{d}, \sum_{k=1}^{2d} z_{2,k} \t^{k}, \sum_{k=1}^{2d} z_{3,k} \t^{k}),\, z_{1,d}\ \neq 0 \}.$$
In the hypersurface case, the defining minor for $\ZZ_{d}$ stems from the derivative $\partial_{\y_{1}}f$, whence $\partial^{*}_{\y_{1}} f = 1$. Therefore the equation for $\ZZ_{d}^{*}$ simplifies to $f(\zz) \equiv 0 \mod \t^{2d + 1}$, which is equivalent to the system 
%
%-------------------------------------------------
\begin{align*} z_{1,d} &\neq  0,\\
 z_{2,1}z_{3,1} &=0,\\
  &\vdots \\
  \sum_{j=1}^{2d-1} z_{2,2d-1 - j} z_{3,j} &= 0,\\
 z_{1,d}^{2} - \sum_{j=1}^{2d} z_{2,2d - j} z_{3,j} &= 0.
\end{align*}
%-------------------------------------------------

By the cartesian factorization theorem \ref{factorization}, $\YY_{d}$ is isomorphic to 
$\ZZ^{*}_{d} \times \AAA^{2}$. An isomorphism 
$$\Phi^{-1}_{d} = (\Phi^{-1}_{d,1},\Phi^{-1}_{d,2},\Phi^{-1}_{d,3}) \colon  \ZZ^{*}_{d} \times \AAA^{2} \rightarrow \YY_{d}$$
can be explicitly computed and is given by
%\begin{comment}
%\begin{align*}
%\Phi^{-1}_{d,1}(z_{1},z_{2},z_{3},a_{2},a_{3}) &=
%z_{1} + z_{1} \left(-1 + \sqrt{ 1- 4 \left(\frac{z_{1}^{2} -z_{2} z_{3}}{4 z_{1}^{2}}  - z_{2} a_{3} - z_{3} a_{2} - 4z_{1}^{2} a_{2} a_{3} \right)}\right )\\
%\Phi^{-1}_{d,2}(z_{1},z_{2},z_{3},a_{2},a_{3}) & =  z_{2} +  (  2z_{1})^{2} a_{2}\\
%\Phi^{-1}_{d,3} (z_{1},z_{2},z_{3},a_{2},a_{3})& =  z_{3} + (  2 z_{1})^{2} a_{3}
%\end{align*}
%\end{comment}
%
$$\Phi_{d,1}^{-1}(z_{1},z_{2},z_{3}, a_{2},a_{3}) = z_{1} + \t^{d}\cdot(-z_{1,d} + \sqrt{\Xi(z_{1},z_{2},z_{3}, a_{2},a_{3}) }),$$
$$\Phi_{d,2}^{-1}(z_{1},z_{2},z_{3}, a_{2},a_{3}) = z_{2} + \t^{2d} a_{2},$$
$$\Phi_{d,3}^{-1}(z_{1},z_{2},z_{3}, a_{2},a_{3}) = z_{3} + \t^{2d}a_{3},$$
where
$$\Xi(z_{1},z_{2},z_{3}, a_{2},a_{3}) =  z_{1,d}^{2} - \frac{z_{1}^2 - z_{2}z_{3}}{\t^{2d}} +z_{2}a_{3} + z_{3}a_{2} + \t^{2d} a_{2} a_{3}.$$
Here is a short explanation for these formulas. We expand $f(z + v) $ into $ f(z) + \partial_{\y}(f)(z) \cdot v + q(z,v)$, say
$$ f(z + v) = (z_{1}^{2}  -z_{2}z_{3}) + 2z_{1}v_{1} - z_{2}v_{3} - z_{3}v_{2} + (v_{1}^{2} - v_{2} v_{3} ).$$
Substitute $v_{1} = \t^{d} a_{1}, \ v_{2} = \t^{2d}a_{2} ,\ v_{3} = \t^{2d} a_{3}$. We assume that $z \in \mathcal{Z}_{d}^{*}$,
so there exists a series $h_{z}(\t) \in \AAA$ such that $z_{1}^{2} - z_{2}z_{3} = \t^{2d} \cdot h_{z}(t)$. %Observe that $z_{1} = \t^{2d} \cdot z_{1,d}$. 
We want to solve $f(z + v) = 0$, which in ${\rm a}$-coordinates takes the form
$$(z_{1}^{2}  -z_{2}z_{3}) + 2z_{1}\t^{d}a_{1} - z_{2}\t^{2d}a_{3} - z_{3}\t^{2d}a_{2} + (\t^{2d}a_{1}^{2} - \t^{2d}a_{2} \t^{2d}a_{3} )=0,$$
say
$$\t^{2d} \cdot\left( h_{z}(\t) + 2z_{1,d}a_{1} - z_{2}a_{3} - z_{3}a_{2} + a_{1}^{2} - \t^{2d}a_{2} a_{3}  \right) = 0.$$
Therefore
$$ a_{1} ={} -z_{1,d} + \sqrt{ z_{1,d}^{2} - \frac{z_{1}^2 - z_{2}z_{3}}{\t^{2d}} + z_{2} a_{3} + z_{3}a_{2} + \t^{2d} a_{2}a_{3}}$$
as claimed (the square root exists since the order in $t$ of the series is $0$).
\end{example}

\goodbreak

%-------------------------------------------------
%            DEFORMATIONS
%-------------------------------------------------

%\vfill\eject
\goodbreak

\section{Deformations}

In this section we formulate the analogue of the factorization theorem \ref{factorization} for the set $\YY(f)_S$ of deformations $\wt y(\t)$ of a given power series vector $y(\t)\in \YY(f)\subset\AAA^m$ over a base ring $S$. Considering deformations of $y$ corresponds to working, in a certain sense, locally at $y$. To establish the cartesian product structure of the set of deformations it will no longer be necessary to stratify the \arquile variety $\YY(f)$, and the partition theorem \ref{partition} becomes redundant: one can directly prove the factorization. Taking in particular deformations parametrized by the elements of a test algebra $S$ (a local ring whose maximal ideal is nilpotent), the results of this section contain the factorization of formal neighborhoods of arc spaces as proven by Grinberg-Kazhdan and Drinfeld \cite{GK, Dr, BH, BS1, BS2, Bou}.\medskip

We restrict to the case of formal power series and let $\AAA=\k[[\t]]$ always denote the ring of series without constant term. Similar results as below can be proven in the convergent setting, but the details are more involved and require Banach-space techniques as developed in \cite{HM}.\medskip

Let $S$ be a local, not necessarily Noetherian ring, with maximal ideal $\mm_S$ and residue field $S/\mm_S=\kkk$ embedding into $S$ as a subfield. We shall always assume that $S$ is {\it complete} with respect to a filtered topology defined by a decreasing sequence of ideals $J_k$ with $J_k\cdot J_\ell\subset J_{k+\ell}$ and such that $\mm_S^k\subset J_k$.\medskip

% Typical example are the closures of the powers of the maximal ideal of $\kkk[[\s_{m,\infty}]]$ with respect to the inverse limit = order topology of $\kkk[[\s_{m,\infty}]]$; the completeness is necessary to have the Weierstrasss division for deformations]. \med

Denote as in the section on division by $\AA_{S,\circ}=S_\circ[[\t]]$ the ring of formal power series in $\t$ with coefficients in $S$ and zero constant term.
%
% [If $f$ is not a polynomial vector but a series, it does not suffice to assume that the constant term of $\wt y$ lies in $\mm_S$ when considering equations $f(\t,\wt y(\t))=0$, since this may produce -- by equating the coefficient of $\t^0=1$ to $0$ -- for the description of $\YY_S(f)$ formal power series equations for the coefficients of $\wt y(\t)$ in $S$]. 
%
We shall write $\wt y(\t) =(\wt y_1(\t),...,\wt y_m(\t))$ for power series vectors in $\AA_{S,\circ}^m$, with coefficient vectors $(\wt y_{1j},...,\wt y_{mj})$ in $S^m$ of the monomials $\t^j$, for $j\geq 1$. Their residue classes $y(\t)$ modulo $\mm_S$ are power series vectors in $\AAA^m$, and $\wt y(\t)$ is then called a {\it deformation} of $y(\t)$. We may thus write 
$$\wt y(\t)= y(\t)+\wt y\,'(\t)$$
where $\wt y\,'(\t)\in\mm_S[[\t]]^m$ is a vector with coefficients in $\mm_S$, say, a deformation of $0\in\kkk^m$. The set of all deformations $\wt y(\t)$ of a vector $y=y(\t)\in\AAA^m$ parametrized by $S$ is denoted by $(\AA_{S,\circ}^m,y)$, say
$$ (\AA_{S,\circ}^m,y) = y(\t)+ \mm_S\cdot \kkk[[\t]]^m.$$
Let $\s_{ij}$ denote countably many variables, for $1\leq i \leq m$ and $j\geq 1$, and let $\kkk[[\s_{m,\infty}]]$ be the power series ring $\kkk[[\s_{ij},\, 1\leq i \leq m,\, j\geq 1]]$ in these variables, with maximal ideal $(\s_{ij},\, 1\leq i \leq m,\, j\geq 1)$. For a given vector $y(\t)\in \AAA^m$, the vector $\wt y(\t)=(\wt y_1(\t),...,\wt y_m(\t))$ given by
$$ \wt y_i(\t)= y_i(\t)+\sum_{j\geq 1} \s_{ij}\cdot \t^j$$
defines a {\it universal} deformation of $y(\t)$, cf.~\cite{Ha1}: any other deformation of $y(\t)$ over a ring $S$ is obtained from this one by the evaluation of $\s_{ij}$ at elements of $M_S$, i.e., by base change. \medskip

Let $\BB$ be defined as in the earlier sections as the space of formal, convergent or algebraic power series in $\t$ and variables $\y_1,...,\y_m$. Vectors $f(\t,\y)\in\BB^\kk$ with $f(0)=0$ and with associated arquile map $f_\infty:\AAA^m\map\AAA^\kk$ define, for every ring $S$, also a map on $\AA_{S,\circ}^m$, 
$$f_{S,\infty}:\AA_{S,\circ}^m\map \AA_{S,\circ}^\kk,\, \wt y(\t)\map f(\t,\wt y(\t)),$$
called the {\it \arquile map over $S$ induced by $f$}, and a map
$$(f_{S,\infty},y):(\AA_{S,\circ}^m,y)\map (\AA_{S,\circ}^\kk, f_\infty(y)),\,\wt y(\t)\map f(\t,\wt y(\t))$$
between the respective sets of deformations of $y\in\AAA^m$ and $f_\infty(y)\in\AAA^\kk$. For \an \arquile subvariety $\YY=\YY(f)$ of $\AAA^m$ defined by a vector $f\in\BB^\kk$, we shall write 
$$(\YY_S,y)=(\YY_S(f),y)$$
for the set of deformations $\wt y$ of $y$ ``lying in $\YY$'', i.e., satisfying $f_{S,\infty}(\wt y)=0$. Expanding $f_{S,\infty}(\wt y)=f(\t,\wt y(\t))$ as a power series in $\t$ one sees that these deformations are defined by setting the coefficients of $\t^j$ in $f(\t,\wt y(\t))$ equal to $0$ for all $j\geq 0$. This then induces polynomial equations for the coefficients of $\wt y$ because the constant term of $\wt y$ is assumed to be zero. Taking in particular $S=\kkk[[\s_{m,\infty}]]$, we get in this way an ideal $I(\YY,y)_\infty$ in $S$ of {\it universal} polynomials, respectively power series, defining the deformations of $y$ in $\YY(f)$ over $\kkk[[\s_{m,\infty}]]$. We may then identify deformations in $(\YY_S(f),y)$ with deformations of $y$ in $\AA_{S'}^m$ parametrized by the factor ring $S'=S/I$, with $I=I(\YY,y)_\infty$.\medskip

%-------------------------------------------------
%           EXAMPLE
%-------------------------------------------------

\begin{example} Let $y(\t)=(\t^2,\t^3)$ be the canonical solution of $f(\y)=\y_1^3-\y_2^2=0$, and consider a deformation $\wt y(\t)=(\t^2+\s_1\t,\t^3+\s_2\t^2)$ of $y(\t)$ with $S=\kkk[[\s_1,\s_2]]$. Then $f(\wt y(\t))=0$ if and only if 
$$3\t^4\s_1\t+3\t^2\s_1^2\t^2+\s_1^3\t^3=2\t^3\s_2\t^2+\s_2^2\t^4,$$
say,
$$3\s_1=2\s_2, \, 3\s_1^2=\s_2^2,\, \s_1^3=0.$$
In this case, the ideal $I(\YY,y)_\infty$ is not radical.
\end{example}

%-------------------------------------------------

If $Z$ is an algebraic variety or a scheme of finite type over $\kkk$, and $z\in Z$ a $\kkk$-point, we may identify the set $(Z_S,\zz)$ of deformations of $\zz$ in $Z$ with the set of morphisms $\wt \zz:\Spec(S)\map Z$ sending the closed point of $\Spec(S)$ to $\zz$. \medskip

For the simplicity of the exposition we will restrict in this section to {\it polynomial} vectors $f\in\kkk[\y]^\kk$ which, moreover, do not depend on $\t$, so that the \arquile variety $\YY(f)$ coincides with the space $X_{\infty,0}$ of arcs centered at $0$ of the algebraic variety $X$ defined in $\A^m_\kkk$ by $f=0$. This restriction is, however, not a substantial assumption, and all statements can be extended conveniently to the general case. The main result of this section is a reinterpretation of Theorem \ref{cohen} of the introduction, replacing the formal neighborhood of an arc $y$ by its set of deformations.

%-------------------------------------------------
%          THEOREM DEFORMATIONS
%-------------------------------------------------

\begin{theorem} [Structure theorem for deformations]\label{deformations}
Let $X\subset{\mathbb A}_\kkk^m$ be an algebraic variety defined by a polynomial vector $f$ in $\kkk[\y]^\kk$, and let $\YY=\YY(f)\subset\AAA^m$ be the associated \arquile variety (viz the arc space $X_{\infty,0}$ of $X$ of arcs centered at $0$). Let $y=y(\t)\in \Reg(\YY)$ be an arc in $\YY$ not lying entirely in the singular locus ${\rm Sing}\, X$ of $X$. There then exists a scheme $Z$ of finite type over $\kkk$, a point $\zz$ of $Z$, an integer $e$ and a point $b\in\AAA^e$, such that for all complete local rings $S$ one has bijections
$$\Phi_S:(\YY_S,y)\isom (Z_S,z)\times (\AA_{S,\circ}^e,b)$$
\end{theorem}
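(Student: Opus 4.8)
The plan is to mimic the proof of the cartesian factorization theorem \ref{factorization}, but to work over the base ring $S$ from the outset, exploiting the Weierstrass division for deformations (Theorem \ref{divisiondeformations}) in place of the division on strata (Theorem \ref{division}). Since $y$ is an \arqregular point of $\YY=\YY(f)$, by Theorem \ref{singularlocus}(b) we may, after enlarging and shrinking the system, assume that the components $f_1,\dots,f_\kk$ of $f$ generate the prime ideal and that there is a $(\kk\times\kk)$-minor $g$ of $\6_\y f$ with $g(y)\neq 0$; let $d=\ord_\t g(y)$. The key observation is that for any deformation $\wt y\in(\AA_{S,\circ}^m,y)$, the evaluation $g(\wt y)$ reduces modulo $\mm_S$ to $g(y)$, which has order exactly $d$; hence $g(\wt y)$ generates the ideal $\langle\t^d\rangle$ in $\AA_{S,\circ}$ (its leading coefficient in degree $d$ is a unit of $S$, being $\equiv$ a nonzero element of $\kkk$, and all lower coefficients lie in $\mm_S$, so Weierstrass preparation over the complete local ring $S$ applies — this is exactly Theorem \ref{weierstrassdeformations}). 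In particular no stratification by the order of $g$ is needed: the ``stratum'' is automatically all of $(\AA_{S,\circ}^m,y)$.

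First I would set up, as in Remark \ref{rmk_factorization}(d), the division module $\GG=g\cdot\BBB^\kk\times g^2\cdot\BBB^{m-\kk}$ for $f$ (Proposition \ref{divisionmodule}), with diagonal matrix $\DD_g=\diag(g,\dots,g,g^2,\dots,g^2)$ and the associated power series vector $p$ of order $\geq 2$ in $\aa$ from Definition \ref{divisionmoduledef}(ii). By Theorem \ref{divisiondeformations}, every deformation $\wt y$ of $y$ decomposes uniquely as $\wt y=\DD_g(\wt\zz)\cdot \wt a+\wt\zz=\wt\vv+\wt\zz$ with $\wt\zz\in(S[\t]_{\leq 2d})^m$ a deformation of the remainder $\zz$ of $y$ (its first $\kk$ components of degree $\leq d$), and with coefficients of $\wt a,\wt\zz$ given by formal power series in the coefficients of $\wt y$. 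This yields, exactly as in Proposition \ref{isomorphisms}, a bijection $\psi_S:(\AA_{S,\circ}^m,y)\to (Z_S^0,\zz)\times(\VV_{S},0)$ where $Z^0=\ZZ_d=\SS_d\cap\RR_d$ is the quasi-affine $\kkk$-variety of polynomial remainders and $\VV_S=\t^d\cdot\AA_{S,\circ}^\kk\times\t^{2d}\cdot\AA_{S,\circ}^{m-\kk}$; composing with the \arquile change of coordinates $\phi_S:(\zz,\vv)\mapsto(\zz,\DD_g(\zz)(a+p(\zz,a)))$ — which is invertible over $S$ because the relevant inverse function theorem (Theorem \ref{inv}) holds formally and hence over any complete local $S$ by evaluating the universal formulas — we obtain, via the linearization identity of Theorem \ref{generallinearization} now read over $S$,
$$(f_{S,\infty}\circ\chi_S)(\wt\zz,\wt\vv)=f(\wt\zz)+\6_\y f(\wt\zz)\cdot\wt\vv.$$
Imposing $f_{S,\infty}(\wt y)=0$ therefore becomes the linear-in-$\wt\vv$ equation $f(\wt\zz)+\6_\y f(\wt\zz)\cdot\wt\vv=0$, and the remaining step is to solve it and trivialize the fibration exactly as in the proof of Theorem \ref{factorization}: multiply by the adjoint $\6^*_{\y^1}f(\wt\zz)$, use $\6^*_{\y^1}f\cdot\6_{\y^1}f=g\cdot\1_\kk$ together with the invertibility of $g(\wt\zz)$ in $\AA_{S,\circ}$, split $\wt a=(\wt a_1,\wt a_2)$, solve $\wt a_1$ uniquely in terms of $\wt\zz$ and $\wt a_2$, and let $\wt a_2\in\AA_{S,\circ}^{m-\kk}$ be free. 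The condition on $\wt\zz$ that the division of $\6^*_{\y^1}f(\wt\zz)\cdot f(\wt\zz)$ by $\langle g(\wt\zz)^2\rangle$ leaves zero remainder cuts out, by Theorem \ref{divisiondeformations}(b), a closed subscheme $Z\subset Z^0$ of finite type over $\kkk$ (the remainder coefficients are finitely many universal polynomial/power-series functions), and $\zz\in Z$ is the remainder of $y$. Setting $e=m-\kk$ and $b=0\in\AAA^e$ (the reduction of $\wt a_2$ at the base point), the composite $\Phi_S=\lambda_S\circ\phi_S\circ\psi_S$ furnishes the asserted bijection $(\YY_S,y)\isom(Z_S,\zz)\times(\AA_{S,\circ}^e,b)$, functorially in $S$ since every map involved is given by universal formulas; the case $S=\kkk[[\s_{m,\infty}]]$ recovers Theorem \ref{cohen} after passing to completed local rings.

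The main obstacle I anticipate is the verification that the Weierstrass division over the complete local ring $S$ genuinely applies to the divisor $g(\wt y)$: one must check that writing $g(y)=\t^d\cdot u$ with $u$ a unit of $\kkk[[\t]]$ lifts to $g(\wt y)=\t^d\cdot\wt u + (\text{terms in }\mm_S)$ in a way that still permits preparation, i.e.\ that the coefficient of $\t^d$ in $g(\wt y)$ is a unit of $S$ — this is true because $S$ is local with residue field $\kkk$ and that coefficient reduces to the nonzero leading coefficient of $g(y)$ — and that the completeness of $S$ makes the Neumann-series inverse in the proof of Theorem \ref{weierstrassdeformations} converge; fortunately this is precisely what Theorem \ref{weierstrassdeformations} provides, so the obstacle is really just bookkeeping. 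A secondary point requiring care is that $Z$ should be of finite type over $\kkk$ and not merely a formal scheme: here one uses that the remainder of the division $\6^*_{\y^1}f(\wt\zz)\cdot f(\wt\zz)$ by $g(\wt\zz)^2$ depends only on the finitely many low-degree coefficients of $\wt\zz$ (since $\wt\zz$ is a polynomial vector of bounded degree and the division is by a power of a polynomial of bounded order), so the defining equations of $Z$ in $Z^0\subset\A_\kkk^{d(2m-\kk)}$ are finitely many polynomials — exactly the argument of Theorem \ref{fibration}(c), transported verbatim to coefficients in $S$.
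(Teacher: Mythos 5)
Your overall route is the paper's: reduce to a complete intersection with a non\-vanishing minor $g$ at $y$, take the division module $\GG=g\cdot\BBB^\kk\times g^2\cdot\BBB^{m-\kk}$, decompose deformations by the Weierstrass division over $S$ (Theorem \ref{divisiondeformations}), linearize via $\chi_S=(\phi_S\circ\psi_S)^{-1}$, and solve the linearized system with the adjoint matrix. But there is a genuine gap exactly where you assert that the finite-dimensional factor $Z$ is of finite type over $\kkk$. Your ``key observation'' that $g(\wt y)$ generates $\langle\t^{d}\rangle$ in $\AA_{S,\circ}$ is false: the coefficients of $\t^j$ in $g(\wt y)$ for $j<d$ need not vanish, they merely lie in $\mm_S$, so $\langle g(\wt y)\rangle$ is generated by a Weierstrass polynomial $\t^d+u_{d-1}\t^{d-1}+\dots$ with $u_i\in\mm_S$, not by $\t^d$ (take $g(\y)=\y_1$, $y=\t^d$, $\wt y=\t^d+s\cdot\t$ with $s\in\mm_S$: then $\t^d+s\cdot\t=\t\cdot(s+\t^{d-1})$ and the second factor is not a unit). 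Consequently the remainder of the division of $\6^*_{\y^1}f(\wt\zz)\cdot f(\wt\zz)$ by $g(\wt\zz)^2$ has coefficients which are genuine formal power series in the coefficients of $\wt\zz$, not polynomials; the example closing the Deformations section exhibits precisely this, with the Weierstrass coefficient $u(\s_3,\s_4)=\frac{1}{2\s_4}(1-\sqrt{1+4\s_3\s_4})$. The argument of Theorem \ref{fibration}(c) therefore does not transport ``verbatim'': over the field $\kkk$ one works on the stratum where $\ord\,g(\zz)$ equals $d$ exactly, whereas a deformation $\wt\zz$ is not confined to that stratum, and the order of $g(\wt\zz)^2$ can drop below $2d$ modulo $\mm_S$-terms.

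The missing idea is Drinfeld's trick, which the paper uses at this point: adjoin to $\wt\zz$ the coefficient vector $\wt u=(\wt u_0,\dots,\wt u_{2d-1})$ of the Weierstrass polynomial $h(\wt u)=\t^{2d}+\sum_{i=0}^{2d-1}\wt u_i\cdot\t^i$ of $g(\wt\zz)^2$ (the $\wt u_i$ being power series in the coefficients of $\wt\zz$, by Theorem \ref{divisiondeformations}), and replace the division by $g(\wt\zz)^2$ with the \emph{polynomial} division by $h(\wt u)$. The membership condition $\6^*_{\y^1}f(\wt\zz)\cdot f(\wt\zz)\in h(\wt u)\cdot\AA_{S,\circ}^\kk$ is then cut out by finitely many polynomial equations in the pair $(\wt\zz,\wt u)$, and $Z$ is the finite-type scheme they define in the enlarged affine space. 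Without this step your $Z$ is only a formal subscheme of $Z^0$, and the assertion of the theorem --- a scheme of finite type over $\kkk$, uniform in the complete local ring $S$ --- is not obtained. The remainder of your argument (the bijections $\psi_S$, $\phi_S$, the linearization identity over $S$, and the trivialization via $\lambda_S$) matches the paper's proof and is sound.
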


%-------------------------------------------------
%           REMARKS DEFORMATIONS
%-------------------------------------------------

\begin{rmks} (a) The maps $\Phi_S$ are given by the composition of a division map with \an \arquile map over $S$ induced from \an \arquile map over $\kkk$. In this sense they are ``functorial'' with respect to $S$, see the proof.\medskip

(b) Taking for $S$ a $\kkk$-algebra with nilpotent maximal ideal $\mm_S$ one recovers the factorization theorem for arc spaces given by Grinberg-Kazhdan and Drinfeld \cite{GK, Dr, BH, Bou, BNS}.%
%
%-------------------------------------------------
%
\footnote{ To be more precise, one would have to consider here also deformations with non-zero constant term. This can be done since the power series are substituted into polynomial equations.}
In \cite{BS2}, variations and extensions of this result have been proven.
%-------------------------------------------------
\vskip .2cm

(c) The isomorphism $\Phi$ is {\it embedded} in the sense that it is given as the restriction of a suitable bijective map $\wh \Phi$ which is defined on the ambient space $(\AA_{S,\circ}^m,y)$ of $(\YY_S,y)$.\medskip

(d) The assertion of the theorem is not a direct consequence of the factorization theorem \ref{factorization}, since the deformations $\wt y$ in $(\YY_S,y)$ are not required to lie entirely in the stratum $\YY_d$ of $\YY$ in which the vector $y$ lies (as the parameter varies, the order of the minor $g$ may drop).\medskip

(e) In the case where $X\subset \wh \A^m_\kkk$ is a formal variety defined by a formal power series vector $f\in\kkk[[\y]]$, the same proof as for the theorem goes through with only notational modifications. If $X\subset (\C^m,0)$ is the germ of a complex analytic variety defined by a vector $f\in\C\{\y\}$, things are getting more complicated: the ring $S$ has then also to be a convergent power series ring for which one has to admit only analytic deformations. The structure of the proof is the same, but at each step one has to make sure that all constructed series converge suitably. One may consult \cite{Ha1} to get an impression for the flavour of the required techniques.\medskip

(f) In the next section we will describe an approach to cartesian product structures of deformation spaces via derivations and analytic triviality.
\end{rmks}

%-------------------------------------------------
%             PROOF OF DEFORMATION THEOREM
%-------------------------------------------------

\begin{proof} The proof runs parallel to the proof of the factorization theorem  \ref{factorization} for arquile varities, so we only indicate the necessary modifications. The main difference is the use of the Weierstrass division theorem \ref{divisiondeformations} in the version for deformations instead of the classical version \ref{division}. It is here that we need to work with a {\it complete} ring $S$. The same line of arguments proves Theorem \ref{cohen}.\medskip

The first step is to reduce to a vector $f$ which has {\it full generic rank}, i.e., for which a maximal minor of the Jacobian matrix $\6_\y f$ does not vanish, cf. Theorem \ref{singularlocus}. This is quite standard: As the arc $y$ does not lie entirely in the singular locus of $X$, it must lie in precisely one irreducible component. By Lemma 8.6 of \cite{CdFD}, see also section 3.4 in \cite{CNS}, we may assume that $X$ is irreducible. Represent $X$ as an irreducible component of a complete intersection $X^*$. Using again that $y$ lies in precisely one component of $X^*$, namely $X$, we may assume from the beginning that $X$ is a complete intersection. This is equivalent to saying that $f$ has full generic rank. %[A similar reasoning appears in Artin [Ar1]. Drinfeld in [Dr] is very brief on this].
\medskip

Let $r$ be the height of the ideal $I$ of $\kkk[\y]$ generated by the components of $f$. The singular locus is defined by the vanishing of the $(r\times r)$-minors of $\6_\y f$. As $y$ does not lie in ${\rm Sing}\,X$, there exists an $(r\times r)$-minor $g$ of $\6_\y f$ which does not vanish upon the substitution of the variables $\y$ by the vector $y$.  So we have $g(y)\neq 0$. Let $d$ be the order of $g(y)$ as a power series in $\t$, and denote by $\DD_g$ the diagonal matrix 
$$\DD_g=\diag(g,...,g,g^2,...,g^2)$$
as in remark (d) following the factorization theorem \ref{factorization}. Decompose $y=\vv+\zz=a\cdot \DD_g(\zz)+\zz$ according to the classical division theorem \ref{division}, with $a,\zz\in \AAA^m$ and $\vv=a\cdot \DD_g(\zz)\in \t^d\cdot\AAA^\kk\times \t^{2d}\cdot\AAA^{m-\kk}$. Here, $\vv$ is a power series vector in $\t$ without constant term whose first $\kk$ components have order $\geq d+1$ and whose last $m-\kk$ components have order $\geq 2d+1$. Similarly, $\zz\in \k[\t]^\kk_{\leq d}\times \k[\t]^{m-\kk}_{\leq 2d}$ is a polynomial vector in $\t$ without constant term whose first $\kk$ components have degree $\leq d$ and whose last $m-\kk$ components have degree $\leq 2d$.\medskip

By the Weierstrass division theorem \ref{divisiondeformations} for deformations we may then decompose any deformation $\wt y$ of $y$ into 
$$\wt y =  \wt \vv+ \wt \zz =\wt a\cdot \DD_g(\wt \zz) +\wt \zz,$$
where $\wt \vv$, $\wt \zz$ and $\wt a$ are deformations of the vectors $\vv$, $\zz$ and $a$, and where $\wt \zz$ is a polynomial vector in $\t$ whose first $\kk$ components have degree $\leq d$, and whose last $m-\kk$ components have degree $\leq 2d$. The coefficients of $\wt \vv$, $\wt \zz$ and $\wt a$ are formal power series in the coefficients of $\wt y$.%
\footnote{ In contrast to the situation over a field $\kkk$, these coefficients can now be genuine series, see the example after the proof.}
Note that, by an argument as in remark \ref {rmkdivisionmodule} (a), the matrix $\DD_g(\wt\zz)$ equals $\DD_g(\wt y)$ up to the multiplication with an invertible matrix with entries in $S[[\t]]$: this holds because $\wt y$ and $\wt\zz$ differ from $y$ and $\zz$ by series with coefficients in the maximal ideal $\mm_S$ of $S$. \medskip

Set 
$$\RR_{d,S}=(\k[\t]_{\leq d}^\kk\times \k[\t]_{\leq 2d}^{m-\kk})_S=S_\circ[\t]_{\leq d}^\kk\times S_\circ[\t]_{\leq 2d}^{m-\kk},$$
so that $\RR_{d,S}=(\RR_d)_S$ for $\RR_d= \k[\t]_{\leq d}^\kk\times \k[\t]_{\leq 2d}^{m-\kk}$, where $S_\circ[\t]$ denotes the space of polynomials in $\t$ over $S$ whose constant term is zero. Denote by  $(\RR_{d,S},\zz)$ the set of deformations $\wt\zz$ of $\zz$ in $\RR_{d,S}$, with $\zz$ given by the division $y=a\cdot \DD_g(\zz)+\zz=\vv+\zz $.\medskip

Define bijective maps 
$$\psi_S:(\AA_{S,\circ}^m,y)\map (\RR_{d,S},\zz)\times(\AA_{S,\circ}^m,a), $$
$$ \wt y=\wt a\cdot \DD_g(\wt\zz)+\wt \zz\map (\wt \zz,\wt a),$$
and
$$\phi_S: (\RR_{d,S},\zz)\times(\AA_{S,\circ}^m,a)\map (\RR_{d,S},\zz)\times(\AA_{S,\circ}^m,a+p(\zz,a)),$$
$$ (\wt \zz,\wt a)\map  (\wt \zz, \DD_g(\wt \zz)\cdot (\wt a+p(\wt \zz,\wt a))),$$
as in Proposition \ref{isomorphisms}, with $p$ associated to $\DD_g$ as in definition \ref{divisionmodule}. Let  
$$\chi_S =(\phi_S\circ\psi_S)^{-1}:(\RR_{d,S},\zz)\times(\AA_{S,\circ}^m,a+p(\zz,a))\map (\AA_{S,\circ}^m,y)$$
denote the inverse of the composition of $\phi_S$ with $\psi_S$. As in the proof of the  lineari\-zation theorem \ref{linearization} it follows that the map
$$(f_{S,\infty},y)\circ \chi_S: (\RR_{d,S},\zz)\times(\AA_{S,\circ}^m,a+p(\zz,a))\map (\AA_{S,\circ}^\kk,f(\zz)+\6_\y f(\zz)\cdot\vv)$$
$$(\wt\zz,\wt a)\map f(\wt \zz)+\6_\y f(\wt \zz)\cdot \wt a\cdot \DD_g(\wt \zz)$$
is linear in $\wt a$. Define $(\ZZ^*_S,\zz)$ as the space of deformations $\wt \zz\in (\RR_{d,S},\zz)$ of $\zz$ such that $f(\wt \zz)\in \6_\y f(\wt \zz)\cdot (\AA_{S,\circ}^m,a)\cdot \DD_g(\wt\zz)$. Similarly as in the proof of the factorization theorem \ref{factorization} we may describe $(\ZZ^*_S,\zz)$ as the space of deformations $\wt \zz\in (\RR_{d,S},\zz)$ of $\zz$ such that 
$$\6^*_{\y^1}f(\wt\zz)\cdot f(\wt \zz)\in g(\wt \zz)^2\cdot \AA_{S,\circ}^\kk.$$
Recall here that $g(\zz)$ has order $d$ as a power series in $\t$, so that $g(\zz)^2$ has order $2d$. But $g(\wt\zz)^2$ may have smaller order in $\t$, stemming from monomials $\t^j$ for $j<2d$ with coefficients in $\mm_S$. Therefore, if we divide $\6^*_{\y^1}f(\wt\zz)\cdot f(\wt \zz)$ by $g(\wt\zz)^2$ according to Theorem \ref{divisiondeformations} with initial monomial $\t^{2d}$ and quotient in $\AA_{S,\circ}$, the remainder will be a polynomial vector in $\t$ of degree  $\leq 2d$ whose coefficients are only {\it formal power series} but not necessarily {\it polynomials} in the coefficients of the expansion of $\wt \zz$, see the example below. Equating these series to $0$ will give conditions on the coefficients of $\wt\zz$ which are equivalent to the membership $\6^*_{\y^1}f(\wt\zz)\cdot f(\wt \zz)\in g(\wt \zz)^2\cdot \AA_{S,\circ}^\kk$.\medskip

This contrasts the description of the set $\ZZ^*_d$ from Theorem \ref{fibration} (c), which was shown to be a quasi-affine variety. In order to remedy this drawback in the setting of deformations we use a trick from \cite{Dr}. Map $\wt \zz$ to the pair $(\wt\zz,\wt u)$, where $\wt u=(\wt u_0,...,\wt u_{2d-1})\in \AA_{S,\circ}^{2d}$ is the coefficient vector of the Weierstrass polynomial $h(\wt u)=\t^{2d} +\sum_{i=0}^{2d-1} \wt u_i\cdot \t^i$ of $g(\wt\zz)^2$. By Theorem \ref{divisiondeformations}, the components $\wt u_i$ are formal power series in the coefficients of $\wt\zz$. Obviously, the membership $\6^*_{\y^1}f(\wt\zz)\cdot f(\wt \zz)\in g(\wt \zz)^2\cdot \AA_{S,\circ}^\kk$ for vectors $\wt\zz$ is equivalent to the membership
$$\6^*_{\y^1}f(\wt\zz)\cdot f(\wt \zz)\in h(\wt u)\cdot \AA_{S,\circ}^\kk$$
of pairs $(\wt\zz,\wt u)$. But now we may divide $\6^*_{\y^1}f(\wt\zz)\cdot f(\wt \zz)$ {\it polynomially} by $h(\wt u)$, thus producing a remainder whose coefficients are polynomials in the coefficients of $\wt \zz$ and $\wt u$. Observe here that all constructions are ``functorial'' in $S$, i.e., do not depend on the special choice of $S$. The next step is to equate these coefficient polynomials to $0$ to get suitable equations for the required set of pairs $(\wt\zz,\wt u)$.  \medskip

Finally, define $b\in \AAA^{m-r}$ as the image of the vector $a$ under the map $\lambda_d\circ \phi_d\circ \psi_d$ from the proof of the  factorization theorem \ref{factorization}. Then proceed as in this proof to establish the required factorization
$$\Phi:(\YY_S,y)\isom (\ZZ_S,\zz)\times (\AA_{S,\circ}^{m-r},b).$$
But $\RR_d$ is a finite dimensional $\kkk$-vector space, so that $\RR_{d,S}$ can be identified with the set $(\RR_d)_S$ of morphisms $\Spec(S)\map \RR_d$. Therefore there exists, with the notation from the beginning of this section, a scheme $Z$ of finite type over $\kkk$ such that $(\ZZ_S,\zz)$ equals the space $(Z_S,\zz)$ of deformations of $\zz$ in $Z$ over $S$. 
\end{proof}

%-------------------------------------------------
%            EXAMPLE DEFORMATIONS
%-------------------------------------------------

\begin{example} We illustrate the fact that the division of a deformation may produce a quotient and remainder whose coefficients are genuine power series in the elements of the maximal ideal $M_S$ of $S$. Let $S=\kkk[[\s_1,\s_2,\s_3,\s_4]]$ be the formal power series ring in four variables, and consider the polynomials $F=\s_1\cdot \t^5 +\s_2\cdot \t^3$, and $G=\t^4-\s_3\cdot \t^3 -\s_4\cdot \t^5$. Here, we treat $G$ as a {\it deformation} of $\t^4$ with parameters in the maximal ideal $\mm_S$ of the ring $S$. We describe the membership condition $F\in\langle G\rangle\subset S[[\t]]$ in terms of equations for the variables $\s_i$. The {\it virtual} Weierstrass form of $G$ is the polynomial $H=\t^4-\u\cdot \t^3$, for some new variable $\u$, since no smaller degree terms appear in the remainder of the division of $\t^4$ through $G$. Here, the series $u(\s_3,\s_4)$ yielding the actual Weierstrass form $h= \t^4-u(\s_3,\s_4)\cdot \t^3$ of $G$ is given by the implicit equation
$$\u^2\cdot \s_4-\u +\s_3 =0,$$
as can be seen by the polynomial division of $G$ through $H$. As $u$ is supposed to be a power series in $\s_3,\s_4$, the unique relevant solution of this equation is 
$$u(\s_3,\s_4)={1\over 2\s_4}\cdot (1-\sqrt{1+4\s_3\s_4}).$$
The other solution to the quadratic equations is ${1\over 2\s_4}\cdot (1+\sqrt{1+4\s_3\s_4})$. Its denominator $2\s_3$ does not cancel with a factor of the numerator and therefore this solution is not a power series in $\s_3,\s_4$ and can be discarded. The membership $F\in \langle G\rangle=G\cdot S[[\t]]$ is then equivalent to the polynomial equation
$$\s_1+\s_2\cdot \u^2=0,$$
which is just the (unique non-zero) coefficient of the remainder of the division of $F$ by $H$. But as soon as we eliminate the variable $u$ from this equation, by substituting it by the series $u(\s_3,\s_4)$, the equation is no longer polynomial in $\s_1,\s_2,\s_3,\s_4$. 
\end{example}

%-------------------------------------------------
%         TRIVIALITY
%-------------------------------------------------

\section{Triviality}

In this section, we exhibit in an example why the formal neighborhood of an arc $y(\t)$ contained in the \arqsingular locus $\Sing(X_{\infty,0})$ of an arc space $X_{\infty,0}$ of an algebraic variety $X\subset\A^m_\kkk$ is not expected to satisfy the Grinberg-Kazhdan-Drinfeld theorem \ref{cohen}. This relates to similar considerations made by Chiu, de Fernex and Docampo in \cite{CdFD}. Our approach should be suited to extend the example of Bourqui and Sebag \cite{BS1} to more general situations. They show by means of differential algebras that the formal neighborhood of the constant arc $0$ in the arc space $X_{\infty,0}$ of the plane curve $X$ defined by $y_1^2+y_2^2=0$ in $\A^2_\kkk$ is not a cartesian product as in Theorem \ref{deformations}, provided that the field $\kkk$ does not contain a square root of $-1$. \medskip

It seems that the triviality techniques from local complex analytic geometry as developed by Ephraim \cite{Eph}, extended suitably to the infinite dimensional context, could be very appropriate to prove such type of results. %The details will appear in a paper of Christopher Chiu and the first named author \cite{CH}. 
\medskip

We sketch the main ideas: Let $\YY=\YY(f)\subset \AAA^m$ be \an \arquile variety defined by some power series vector $f(\t,\y)\in\BB^\kk$, and let $y=y(\t)\in \YY$ be a point of $\YY$. We say that $\YY$ is {\it analytically trivial} at $y$ if the completed local ring $\wh\O_{\YY,y}$ is isomorphic to the factor ring $\kkk[[\u_\infty]]/J$ of a formal power series ring $\kkk[[\u_\infty]]=\kkk[[\u_1,\u_2,...]]$ in countably many variables $\u_j$ by an ideal $J$ which can be generated by series not depending on the first variable $\u_1$. This means that the formal neighborhood of $y$ in $\YY$ is isomorphic to a cartesian product of the formal neighborhood of $\A^1_\kkk$ at $0$ with the formal neighborhood of some other \arquile variety at a certain point.\medskip

To simplify the notation, we assume that $y=0\in\AAA^m$ is the zero-vector, so all coefficients $y_{ij}$ are zero for all $i$ and $j$. Writing $\wh\O_{\YY,y}$ as a factor ring $\kkk[[\y_{m,\infty}]]/I_\infty$ for some ideal $I_\infty$ of $\wh \O_{\AAA^m,y}=\kkk[[\y_{m,\infty}]]=\kkk[[\y_{ij},\, 1\leq i\leq m,\, j\geq 1]]$, it is equivalent to say that there exist formal power series $\varphi_{ij}(\y_{m,\infty})\in \kkk[[\y_{m,\infty}]]$, for $1\leq i\leq m,\, j\geq 1$, defining a local $\kkk$-algebra isomorphism $\varphi$ of $\kkk[[\y_{m,\infty}]]$ and such that suitable generators of $\varphi(I_\infty)$ do not depend on one of the variables $\y_{ij}$, say, without loss of generality, $\y_{11}$.\medskip

If an arquile variety satisfies the assertion of the Grinberg-Kazhdan-Drinfeld theorem at a point $y$, then it is in particular analytically trivial at that point.\medskip

Assume now that $I_\infty$ is generated by power series $F_\ell\in\kkk[[\y_{m,\infty}]]$, $\ell\in\N$, such that the series $\varphi(F_\ell)$ generate the ideal $\varphi(I_\infty)$.%
%
%-------------------------------------------------
%
\footnote{ We use here the notion of generating set in the topological sense, allowing {\smallit infinite} linear combi\-nations as long as the result is well defined in $\kkk[[\y_{m,\infty}]]$. This complication is due to the fact that one may have to take for $I_\infty$ the topological closure in $\kkk[[\y_{m,\infty}]]$ of the ideal generated algebraicially, i.e., by {\smallit finite} linear combinations, by the polynomials $F_\ell$ resulting from $f$ by Taylor expansion.}
%
%-------------------------------------------------
%
Let $G_n$, $n\in\N$, be generators of $\varphi(I)$ not depending on $\y_{11}$. We may then write $G_n=\sum_{\ell\in\N}\, a_{n\ell}\cdot \varphi(F_\ell)$ for suitable power series $a_{n\ell}\in\kkk[[\y_{m,\infty}]]$. This system of equations can be formally derived with respect to $\y_{11}$ and produces as in the finite dimensional case a regular derivation $\Delta$ of $\kkk[[\y_{m,\infty}]]$ which sends all $F_\ell$ to $I_\infty$, cf.~\cite{Eph}. So the existence of such a derivation is a necessary criterion for the analytic triviality (it should also be sufficient, but this is not used here).\med

Let us carry out this in the example of the arc space of a variety $X\subset\A^m_\kkk$ defined by a Brieskorn polynomial $\y_1^{c_1}+\ldots + \y_m^{c_m}=0$ over a field of characteristic $0$, with exponents $c_i\geq 2$. We consider $X_{\infty,0}$ at the constant arc $y(\t)=0\in\A^m_\kkk$. To simplify the notation, we consider only the case $m=2$ and write $\y$ for $\y_1$ and $\z$ for $\y_2$, as well as $c=c_1$, $d=c_2$. The general case $m\geq 2$ goes analogously. From the equation $\y^c+\z^d=0$ we get the polynomials
$$\ds F_\ell(\y_\infty,\z_\infty)=\sum_{\a\in\N^\N, \abs \a=c,\abs{\abs\a}=\ell}\, \y^\a_\infty+\sum_{\a\in\N^\N, \abs \a=d,\abs{\abs\a}=\ell}\,\z_\infty^\a,$$
where $\a=(\a_1,\a_2,...)$ is a string in $\N^\N$ with only finitely many non-zero entries, $\y^\a_\infty$ stands for the monomial $\prod_i \y_i^{\a_i}$ and where $\abs\a=\sum_i\a_i$ and $\abs{\abs\a}=\sum_i i\cdot \a_i$ denote the total and weighted degrees of $\y^\a$. Denoting by $e_i=(0,...,0,1,0,....)\in\N^\N$ the $i$-th basis vector, we get for all $i\geq 1$
$$\ds \6_{\y_i}F_\ell(\y_\infty,\z_\infty)=\sum_{\a\in\N^\N, \abs \a=c,\abs{\abs\a}=\ell}\, \a_i\cdot \y^{\a-e_i}_\infty.$$
Now observe that for $\ell$ fixed, the monomials $\y^{\a-e_i}$ appearing in the sums are all different for varying $\a$ and $i$. Indeed, $\a-e_i=\b-e_j$ implies $\abs{\abs\a}=\abs{\abs\b}+i-j=\ell=\abs{\abs \b}$, which is only possible if $i=j$; this in turn implies that $\a=\b$. But the monomials in $\kkk[\y_1,\y_2,\ldots]$ are clearly $\kkk$-linearly independent, and hence the same holds for the collection of partial derivatives $\6_{\y_i}F_\ell$ of $F_\ell$, for each fixed $\ell$ (this uses characteristic $0$). Comparing degrees, we conclude that no (finite or infinite) $\kkk$-linear combination of the derivatives $\6_{\y_i}F_\ell$ belongs to $I_\infty$. The same argument applies to the derivative with respect to the variables $\z_i$. Comparing again degrees, one sees that there is actually no regular derivation $\Delta$ of $\kkk[[\y_\infty,\z_\infty]]$ sending all $F_\ell$ into $I_\infty$. By the prospective triviality criterion from above we should then be able to conclude that the formal neighborhood $\wt X_{\infty,0}$ of $X_{\infty,0}$ at $0$ is not analytically trivial. In this case it cannot admit a factorization as in \cite{GK,Dr}.

%-------------------------------------------------
%             BIBLIOGRAPHY
%-------------------------------------------------
%\newpage

%-------------------------------------------------
\vskip 1.2cm 
%H.H. + S.W.: \par
Faculty of Mathematics\par
University of Vienna, Austria\par
herwig.hauser@univie.ac.at\par
sebastian.woblistin@univie.ac.at\par

%-------------------------------------------------
%            TABLE OF SYMBOLS
%-------------------------------------------------

\newpage

\section*{Table of symbols}

\begin{table}[ht]
\begin{tabular}{llllll}

$\t$ & a single variable;\\

$\y=(\y_1,...,\y_m)$ & a system of variables;\\

%$\y_{\{ij\}}=\{\y_{ij},\, i\geq 1,\, j=1,...,m\}$ & a countable collection of variables;\\

$y=y(\t)=(y_1(\t),...,y_m(\t))$ & a vector of power series;\\

$y_{ij}$,\hs .3cm  $1\leq i\leq m,\, j\geq 1$ &  coefficients of $y$;\\

$\AA$ & one of the rings $\kkk[[\t]]$, $\kkk\{\t\}$, $\kkk\langle\t\rangle$; \\

$\AAA=\AA\cap\k[[\t]]$ & one of the rings $\k[[\t]]$, $\k\{\t\}$, $\k\{\t\}_s$, $\k\langle\t\rangle$  \\ & of power series without constant term;  \\

$\langle \t\rangle^d= \t^d\cdot\AAA$  &  the ideal of $\AAA$ generated by $\t^d$;   \\

$\BB$ & one of the rings $\kkk[[\t,\y]]$, $\kkk\{\t,\y\}$, $\kkk\langle\t,\y\rangle$; \\

$\BBB=\BB\cap\k[[\t,\y]]$ & one of the rings $\k[[\t,\y]]$, $\k\{\t,\y\}$, $\k\{\t,\y\}_s$, $\k\langle\t,\y\rangle$  \\ & of power series without constant term;  \\

$\kkk[\y_{m,\infty}]=\kkk[\y_{ij},\, 1\leq i\leq m,\, j\geq 1]$   &   the polynomial ring in countably many variables $\y_{ij}$;  \\

$f(\t,\y)=(f_1(\t,\y),...,f_\kk(\t,\y))\in\BB^\kk$  &   a vector of formal power series;  \\

$I\subset \BB$   &  the ideal generated by the components $f_i$ of $f$;   \\

$\6_\y f\in\BB^{\kk\times m}$   &  the relative Jacobian matrix of $f$ with respect to $\y$;\   \\

$f(y)=f(\t,y(\t))$   &  the evaluation of $f$ at a vector $y=y(\t)\in\AAA^m$;   \\

$\ff:\AAA^m\map\AA^\kk,\, y\map f(y)= f(\t,y(\t))$&  the \arquile map induced by $f\in\BB^k$ with $f(0)=0$;\\

$\YY(f)=\{y\in\AAA^m,\, f(y) =0\}$   &  the \arquile subvariety of $\AAA^m$ defined by the vector $f$;   \\

$\YY(I) =\{y\in\AAA^m,\, f(y) =0,\, f\in I\}$& the \arquile subvariety of $\AAA^m$   defined by the ideal $I$;   \\

$\Reg(\YY)$, $\Sing(\YY)$ & the \arquile regular and singular loci of $\YY=\YY(I)$;\\

$g\in\kkk[[\t,\y]]$   &   a suitable $(\kk\times\kk)$-minor of $\6_\y f$;  \\

$\langle g\rangle=g\cdot \BBB$   &  the ideal generated by $g$ in $\BB$;   \\

$\SS_d=\{y\in\AAA^m,\, \ord\, g(y) =d\}$&  cofinite \ttextile locally closed stratum, \\& for given $g\in\kkk[[\t,\y]]$ and $d\in\N$; \\ 

$\GG=\langle g_1\rangle\times \ldots\times \langle g_m\rangle\subset \BBB^m

$&  the $\k[[\t,\y]]$-submodule associated to $f$ as a \\& division module, for given 
$g_i\in\kkk[[\t,\y]]$; \\ 

$\GG(y)\subset\AAA^m$&  the $\AAA$-submodule of $\AAA^m$ generated by the \\&  evaluations of elements of $\GG$ at a vector $y\in\AAA^m$; \\ 

$\dd = (d_1,...,d_m)\in\N^m$&  an $m$-tuple of prescribed orders; \\ 

$\SS_\dd =\{y\in\AAA^m,\, \ord\, g_i(y) =d_i$& cofinite \ttextile locally closed stratum, \\
\hskip .8cm for all $i=1,...,m\}$&    for given $g_i\in\kkk[[\t,\y]]$ and $\dd\in\N^m$; \\ 

$D=\diag(g_1,...,g_m)\in \kkk[[\t,\y]]^{m\times m}$&   diagonal matrix with entries $g_1,...,g_m$; \\ 

$\VV_\dd =\GG(y)=\langle\t^{d_1}\rangle\times \ldots\times \langle\t^{d_m}\rangle
$&   the $\AAA$-submodule of $\AAA^m$ generated by the  \\& evaluations of a division module $\GG=\langle g_1\rangle\times \ldots\times \langle g_m\rangle$; \\ 

$\RR_\dd=\k[\t]_{\leq d_1}\times\ldots\times \k[\t]_{\leq d_m}$&   the space of remainders of division by $\langle\t^{d_1}\rangle\times \ldots\times \langle\t^{d_m}\rangle$;   \\ 

$\ZZ_\dd =\SS_\dd  \cap\RR_\dd $;&  stratum of remainders $\zz$ with $\ord\, g_i(\zz)=d_i$;   \\ 
 
$\ZZ_\dd ^*=\{\zz\in\ZZ_\dd ,\, f(\zz)\in\6_\y f(\zz)\cdot \VV_\dd \}$ & finite dimensional factor in Theorem \ref{factorization}; \\ 

$\psi_\dd:\SS_\dd \map \ZZ_\dd \times\VV_\dd ,\, y=\vv+\zz\map (\zz,\vv)$ &  decomposition isomorphism given by division \\& of $y$ by $\GG(y)$ in Proposition \ref{isomorphisms}; \\ 

$\phi_\dd: \ZZ_\dd \times\VV_\dd \map \ZZ_\dd \times\VV_\dd ,$ &automorphism in Proposition \ref{isomorphisms};\\

\hskip .8cm $ (\zz,\vv)\map  (\DD_g(\zz)\cdot [a+p(\zz,a)],\zz)$&   \\ 

$\chi_\dd=(\phi_\dd\circ\psi_\dd)^{-1}:\ZZ_\dd \times \VV_\dd \map\SS_\dd,$ & linearizing isomorphism of Theorems \ref{generallinearization} and \ref{fibration}; \\
\hs.8cm $ y=\vv+\zz=\DD_g(\zz)\cdot a+\zz\map$&\\
\hskip 1.4cm $ \DD_g(\zz)\cdot [a+p(\zz,a)]+\zz$&  \\ 

$\Phi_\dd :\SS_\dd \map \ZZ_\dd \times \VV_\dd $&   linearizing isomorphism of Theorem \ref{factorization}.

\end{tabular}
\end{table}

\end{document}